\documentclass[11pt]{amsart}
\usepackage {amsmath, amssymb, amscd, mathrsfs, enumerate, url, graphicx, color}
\usepackage[all, cmtip]{xy}
\usepackage[text={6.5in,9in},centering,letterpaper,dvips]{geometry}

\setlength{\medskipamount}{2ex} 
\setlength{\smallskipamount}{1ex} 
\usepackage{amsrefs}
\usepackage[colorlinks=true]{hyperref}
\usepackage{cleveref}
\usepackage{amscd}
\usepackage{amsthm}
\usepackage{mathtools}
\usepackage{tikz-cd}
\usepackage{bm}
\allowdisplaybreaks

\newtheorem{theorem}{Theorem}[section]
\newtheorem{lemma}[theorem]{Lemma}
\newtheorem{proposition}[theorem]{Proposition}
\newtheorem{corollary}[theorem]{Corollary}

\newtheorem{conjecture}[theorem]{Conjecture}
\newtheorem{fact}[theorem]{Fact}

\theoremstyle{definition}
\newtheorem{definition}[theorem]{Definition}
\newtheorem{example}[theorem]{Example}

\newenvironment{customassumption}[1]
  {\innercustomassumption}
  {\endinnercustomassumption}

\theoremstyle{remark}
\newtheorem{remark}[theorem]{Remark}

\theoremstyle{question}
\newtheorem{question}[theorem]{Question}

\numberwithin{equation}{section}

\newcommand{\R}{\mathbb{R}}

\newcommand{\Z}{\mathbb{Z}}

\newcommand{\lt}{\left}
\newcommand{\rt}{\right}
\newcommand{\tms}{\times}

\newcommand{\rmk}{\begin{remark}}
\newcommand{\ermk}{\end{remark}}
\newcommand{\cor}{\begin{corollary}}
\newcommand{\ecor}{\end{corollary}}
\newcommand{\eq}{\begin{equation}}
\newcommand{\eeq}{\end{equation}}
\newcommand{\eqs}{\begin{equation*}}
\newcommand{\eeqs}{\end{equation*}}
\newcommand{\prop}{\begin{proposition}}
\newcommand{\eprop}{\end{proposition}}
\newcommand{\thm}{\begin{theorem}}
\newcommand{\ethm}{\end{theorem}}
\newcommand{\conj}{\begin{conjecture}}
\newcommand{\econj}{\end{conjecture}}
\newcommand{\lem}{\begin{lemma}}
\newcommand{\elem}{\end{lemma}}
\newcommand{\defi}{\begin{definition}}
\newcommand{\edefi}{\end{definition}}
\newcommand{\ex}{\begin{example}}
\newcommand{\eex}{\end{example}}
\newcommand{\alis}{\begin{align*}}
\newcommand{\ealis}{\end{align*}}
\newcommand{\pf}{\begin{proof}}
\newcommand{\epf}{\end{proof}}
\newcommand{\ali}{\begin{align}}
\newcommand{\eali}{\end{align}}
\newcommand{\qus}{\begin{question}}
\newcommand{\equs}{\end{question}}

\newcommand{\mc}{\mathcal}
\renewcommand{\bf}{\textbf}

\newcommand{\C}{\mathbb{C}}

\newcommand{\sub}{\subset}

\newcommand{\ov}{\overline}

\newcommand{\bb}{\mathbb}

\newcommand{\op}{\operatorname}

\renewcommand{\a}{\alpha}
\renewcommand{\b}{\beta}
\renewcommand{\d}{\partial}
\newcommand{\e}{\epsilon}

\newcommand{\g}{\gamma}

\newcommand{\s}{\sigma}

\renewcommand{\l}{l}
\renewcommand{\o}{\omega}

\newcommand{\fk}{\frak}

\newcommand{\G}{\Gamma}

\renewcommand{\O}{\mathcal{O}}

\renewcommand{\S}{\Sigma}

\renewcommand{\ov}{\overline}

\newcommand{\tU}{\tilde{U}}

\newcommand{\SLC}{\operatorname{SL}(2, \mathbb{C})}
\newcommand{\SL}{\op{SL}_2}
\newcommand{\su}{\op{SU}(2)}
\newcommand{\HP}{\mathit{HP}}
\newcommand{\Perv}{\op{\bf{Perv}}}

\begin{document}
\title{A sheaf-theoretic $\SLC$ Floer homology for knots}

\author[Laurent C\^ot\'e]{Laurent C\^ot\'e}
\thanks {LC was supported by a Stanford University Benchmark Graduate Fellowship.}
\address{Department of Mathematics, Stanford University, 450 Serra Mall, Stanford, CA 94305}
\email{lcote@stanford.edu}

\author[Ciprian Manolescu]{Ciprian Manolescu}
\thanks {CM was supported by NSF grant DMS-1708320.}
\address {Department of Mathematics, UCLA, 520 Portola Plaza, Los Angeles, CA 90095}
\email {cm@math.ucla.edu}

\begin{abstract}
Using the theory of perverse sheaves of vanishing cycles, we define a homological invariant of knots in three-manifolds, similar to the three-manifold invariant constructed by Abouzaid and the second author. We use spaces of $\SLC$ flat connections with fixed holonomy around the meridian of the knot. Thus, our invariant is a sheaf-theoretic $\SLC$ analogue of the singular knot instanton homology of Kronheimer and Mrowka. We prove that for two-bridge and torus knots, the $\SLC$ invariant is determined by the $l$-degree of the $\widehat{A}$-polynomial. However, this is not true in general, as can be shown by considering connected sums of knots.
\end {abstract}

\maketitle

\section{Introduction}
Floer's instanton homology \cite{Floer} is an important gauge-theoretic invariant of homology $3$-spheres, defined using $\su$ connections in the trivial bundle. It was later extended to other bundles over $3$-manifolds; see \cite{FloerSurgery, DonaldsonBook}. Moreover, Kronheimer and Mrowka \cite{KMknots} developed a similar invariant for knots in three-manifolds, which they called {\em singular instanton homology}. Their construction uses $\su$ connections on the three-manifold that are singular along the knot, having traceless holonomy around the meridian. (The trace zero condition is needed to ensure monotonicity, so that an index bound on the moduli spaces of Floer trajectories gives an energy bound.) The singular instanton homology of knots can be used to derive lower bounds for the slice genus of knots \cite{KMs}, and was notably used in the proof that Khovanov homology detects the unknot \cite{KMUnknot}.

In \cite{abou-man}, motivated by Witten's work on Khovanov homology \cite{witten}, Abouzaid and the second author defined a homological invariant of three-manifolds using $\SLC$ instead of $\su$ connections. When working with complex gauge groups, defining Floer homology in the usual way is difficult because of noncompactness issues. However, one also expects no trajectories between different components of the space of $\SLC$ connections, which should make things easier. Indeed, the construction in \cite{abou-man} uses only sheaf theory, and none of the analysis characteristic of gauge theory. The resulting invariant, denoted $\HP^*(Y)$, is called the {\em sheaf-theoretic $\SLC$ Floer cohomology} of the three-manifold $Y$.

The purpose of this paper is to construct an invariant similar to $\HP^*$ for knots in three-manifolds. In the spirit of Kronheimer and Mrowka's definition of singular instanton homology, we use $\SLC$ connections on the knot complement for which the trace of the holonomy around the meridian of the knot is fixed to be some value $\tau \in (-2,2)$. Note that, since we do not work with moduli spaces of trajectories, we do not have to worry about monotonicity. Also, for simplicity, we will restrict our attention to irreducible connections (unlike in the work of Kronheimer and Mrowka).

Here is a sketch of the construction. Consider a doubly-pointed knot $K \subset Y$, that is, a knot with two distinct points fixed on it. We choose a Heegaard decomposition of $Y$ of genus $g \geq 6$, such that the Heegaard surface $\Sigma$ intersects $K$ transversely in the two points. We consider the relative character variety $X^{\tau}_{irr}(\Sigma)$, made of irreducible $\SLC$ connections on $\Sigma$ with holonomy trace $\tau$ around the two points. This is a complex symplectic manifold, which can be identified with a space of parabolic Higgs bundles on the surface. The two handlebodies yield complex Lagrangians $L_0$, $L_1 \subset X^{\tau}_{irr}(\Sigma)$, which can be equipped with spin structures. The intersection $L_0 \cap L_1$ is an oriented d-critical locus in the sense of Joyce \cite{joyce}. Applying the work of Bussi \cite{bussi}, we obtain from here a perverse sheaf of vanishing cycles, $P^{\bullet}_{L_0, L_1}$, on $L_0 \cap L_1$. Observe that $L_0 \cap L_1$ is the relative character variety $X^{\tau}_{irr}(Y)$ associated to $Y$, defined by asking the holonomy around the knot meridian to have trace $\tau$.

\begin{theorem}
\label{thm:main}
Let $Y$ be a closed, connected, oriented three-manifold, and $K \subset Y$ a doubly-pointed knot. Then, the object $P^{\bullet}_{\tau}(K):=P^{\bullet}_{L_0, L_1}$ is an invariant of $Y$ and $K$, up to canonical isomorphism in a category  $\textup{\bf{Perv}}'(X^{\tau}_{irr}(Y))$ of perverse sheaves on $X^{\tau}_{irr}(Y)$.

As a consequence, its hypercohomology
$$  \HP^*_{\tau}(K) := \bb{H}^*(P^{\bullet}_{\tau}(K))$$
is also an invariant of $Y$ and $K$, well-defined up to canonical isomorphism in the category of $\Z$-graded Abelian groups. 
\end{theorem}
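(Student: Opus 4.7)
The plan is to show that the perverse sheaf $P^{\bullet}_{L_0,L_1}$ is invariant, up to canonical isomorphism in $\textbf{Perv}'(X^{\tau}_{irr}(Y))$, under all the auxiliary choices in the construction: (a) the doubly-pointed genus $g \geq 6$ Heegaard splitting of $(Y,K)$, (b) the spin structures on $L_0$ and $L_1$, and (c) the ordering of the two marked points on $K$. The strategy is the standard one for Heegaard-splitting-based invariants: appeal to a Reidemeister--Singer-type theorem for doubly-pointed Heegaard decompositions, which asserts that any two such splittings (of the same genus bound) are connected by a finite sequence of ambient isotopies rel $K$ and stabilizations, and then prove invariance under each elementary move, plus independence of (b) and (c).

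I would handle isotopy invariance first. An ambient isotopy of $\Sigma$ (fixing the two punctures on $K$) induces a symplectomorphism $\Phi$ of $X^{\tau}_{irr}(\Sigma)$ carrying the old pair $(L_0,L_1)$ to the new one. For portions of the isotopy where the intersection remains transverse, Bussi's construction \cite{bussi} is manifestly functorial under symplectomorphisms of complex symplectic manifolds carrying Lagrangian pairs to Lagrangian pairs, and so yields a canonical isomorphism of perverse sheaves. At the finitely many times when transversality fails, one performs a local Morse-theoretic analysis at each degenerate intersection point: the Joyce d-critical structure on $L_0 \cap L_1$ varies continuously and the vanishing cycle sheaves on either side are identified by a local model computation, extending the isomorphism across the bifurcation.

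Next comes stabilization invariance, which I expect to be the main technical obstacle. A stabilization replaces $(\Sigma, L_0, L_1)$ by $(\Sigma \# T^2, L_0 \times \Lambda_0, L_1 \times \Lambda_1)$, where the extra genus-one factor contributes a local pair of complex Lagrangians $\Lambda_0,\Lambda_1$ in $X^{\tau}(T^2\setminus\{\text{pt}\})$ that meet transversely at a single point corresponding to the trivial extension. A Thom--Sebastiani-type property for perverse sheaves of vanishing cycles, adapted to d-critical loci as in Joyce's and Bussi's framework, identifies $P^{\bullet}_{L_0\times\Lambda_0,L_1\times\Lambda_1}$ with $P^{\bullet}_{L_0,L_1}$ tensored with the skyscraper at the unique transverse point, yielding the desired canonical isomorphism. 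The genus restriction $g \geq 6$ is used here (and throughout) to keep $X^{\tau}_{irr}(\Sigma)$ a smooth complex symplectic manifold of the expected dimension and to ensure that the Lagrangians stay smooth after stabilization, so that the hypotheses of Bussi's theorem remain satisfied.

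Finally, the spin structure ambiguity is absorbed by passing to $\textbf{Perv}'(X^{\tau}_{irr}(Y))$: Bussi's sheaf depends on the spin structures only up to a $\Z/2$-torsor of twistings, and the category $\textbf{Perv}'$ is designed so that all such twists become canonically isomorphic. Swapping the two marked points on $K$ corresponds to a symplectic involution of $X^{\tau}_{irr}(\Sigma)$ that preserves both $L_0$ and $L_1$, giving a canonical isomorphism at the sheaf level. The hypercohomology statement is then automatic: $\bb{H}^*$ is a functor, so any canonical isomorphism of perverse sheaves in $\textbf{Perv}'(X^{\tau}_{irr}(Y))$ descends to a canonical isomorphism of $\Z$-graded abelian groups $\HP^*_{\tau}(K)$.
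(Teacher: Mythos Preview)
Your proposal has several genuine gaps and misunderstandings compared to the paper's argument.

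First, the role of spin structures and the genus bound. You claim that $\textbf{Perv}'$ is ``designed so that all such twists become canonically isomorphic,'' absorbing a $\Z/2$-torsor of spin ambiguities. This is not correct: $\textbf{Perv}'$ is defined simply to allow morphisms coming from self-diffeomorphisms of the underlying sutured manifold (see Section~\ref{sec:def}), and has nothing to do with spin twists. Instead, the paper proves (Proposition~\ref{proposition: spinstructures}) that for $g \geq 6$ the Lagrangians $L_i = X^{\tau}_{irr}(U_i)$ are simply connected and hence carry a \emph{unique} spin structure, so there is no ambiguity to absorb. This is the actual reason for the genus bound; your claim that $g \geq 6$ is needed to keep $X^{\tau}_{irr}(\Sigma)$ smooth is false, since Proposition~\ref{proposition: smoothcharacterscheme} shows smoothness for all $g$.

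Second, your stabilization model is incorrect. A stabilization does not produce a product pair $(L_0 \times \Lambda_0, L_1 \times \Lambda_1)$ inside a product symplectic manifold: the relative character variety of $\Sigma \# T^2$ is not $X^{\tau}_{irr}(\Sigma) \times X^{\tau}_{irr}(T^2 \setminus \{\text{pt}\})$, and Thom--Sebastiani does not directly apply. The paper instead follows the argument of \cite[Prop.~7.3]{abou-man}, which requires two nontrivial topological inputs that you do not mention: uniqueness of spin structures on the $L_i$, and connectedness and simple connectedness of $X^{\tau}_{irr}(\Sigma)$ (established in Appendix~I via the nonabelian Hodge correspondence with parabolic Higgs bundles).

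Third, and most seriously, you do not address \emph{naturality}: showing that the isomorphism $P(\mathcal{H}) \to P(\mathcal{H}')$ is independent of the chosen sequence of Heegaard moves. This is essential for the ``canonical isomorphism'' claim in the theorem, and the paper devotes Section~3.5 to it, invoking the Juh\'asz--Thurston--Zemke axiomatics for sutured manifolds (which is why the construction passes through the sutured blowup $(E_K, \gamma)$ rather than the bare knot complement). Without this step you only get that the isomorphism \emph{class} of $P^{\bullet}_{\tau}(K)$ is well-defined, not the object up to canonical isomorphism.

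Finally, your isotopy discussion is off-target: an ambient isotopy of the Heegaard surface does not move the intersection $L_0 \cap L_1 = X^{\tau}_{irr}(E_K)$ at all, so there is no ``bifurcation analysis at degenerate times'' to perform. The paper simply absorbs isotopies into sequences of stabilizations and destabilizations (Proposition~\ref{proposition:reid-sing}), and handles diffeomorphisms by direct functoriality of Bussi's construction.
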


We refer to Section~\ref{sec:def} for an exact definition of the category $\Perv'(X^{\tau}_{irr}(Y))$.

The proof of Theorem~\ref{thm:main} goes along the same lines as the proof of the corresponding result for closed three-manifolds in \cite{abou-man}. It involves checking invariance under stabilization, and naturality. In the process we have to establish certain properties of relative character varieties (e.g. simple connectivity) that were not immediately available in the literature.

We will refer to $\HP^*_{\tau}(K)$ as the 
\emph{$\tau$-weighted sheaf-theoretic $\op{SL}(2, \C)$-Floer cohomology} of the knot $K \subset Y$.
When computing $\HP^*_{\tau}$ for various knots, we will write $M_{(k)}$ for an Abelian group $M$  supported in degree $k$.

By analogy with the signed count of flat connections with fixed meridian trace in the $\su$ case (cf. \cite{lin-casson} and \cite{herald}), we make the following definition.

\begin{definition}
We let the \emph{($\tau$-weighted, sheaf-theoretic) $\op{SL}(2, \C)$ Casson-Lin invariant} of $K$ be  
the Euler characteristic of $\HP^*_{\tau}(K)$, denoted $ \chi_{\tau}(K) \in \Z.$
\end{definition}

Let us now assume that $Y$ is an integral homology sphere. For a knot $K \subset Y$, consider the character variety of the complement $Y - \op{nbhd}(K)$ and its image in the character variety of the boundary torus, $X(T^2) \cong (\C^* \times \C^*)/\Z_2$. The one-dimensional components of this image are (roughly) the zero set of a polynomial in two variables, $m$ and $l$, corresponding to the meridian and the longitude of the knot. This is the $A$-polynomial of the knot, introduced in \cite{planecurves}. Boyer and Zhang \cite{boyer-zhang} have a variant called the $\widehat{A}$-polynomial, $\widehat{A}(m, l)$, which also keeps track of the degrees of the maps between the one-dimensional components of $X(Y - \op{nbhd}(K))$ and $X(T^2)$. 

In simple cases, the relative character variety $X^{\tau}_{irr}(Y)$ is a finite number of points, obtained by intersecting $X(Y - \op{nbhd}(K))$ with the preimage of the hyperplane
$$ (\{\tau\} \times \C^*)/\Z_2 \subset (\C^* \times \C^*)/\Z_2 \cong X(T^2).$$
If so, we expect $ \HP^*_{\tau}(K)$ to be isomorphic to several copies of $\Z$, all in degree zero, and the number of copies to be given by the $l$-degree of the $\widehat{A}$-polynomial. We prove that this is the case in the following situation. 

\begin{theorem}
\label{thm:degd}
Suppose that $Y$ is an integral homology sphere and $K \subset Y$ is a knot such that the character scheme $\mathscr{X}^{\tau}(Y - K)$ is reduced and one-dimensional. Then, for all but finitely many $\tau \in (-2,2)$, we have that
\eq
\label{eq:degd}
 \HP^*_{\tau}(K) =  \Z^d_{(0)},
 \eeq
where $d=\op{deg}_{l} \widehat{A}(m, l)$. \end{theorem}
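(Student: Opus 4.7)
The plan is to reduce the theorem to $d$ independent local calculations at transverse Lagrangian intersection points. First I would translate the scheme-theoretic hypothesis into generic transversality data. Since the one-dimensional scheme $\mathscr{X}^{\tau}(Y-K)$ is reduced, the meridian-trace function $\op{tr}\rho(\mu)$ is a non-constant regular function on each irreducible component; its critical values, together with the finitely many $\tau$ at which a ``bad'' stratum (abelian, binary dihedral, or singular) meets the slice, form a finite exceptional set in $(-2,2)$. For all other $\tau$, the fiber of the meridian-trace map is a reduced zero-dimensional subscheme of the irreducible locus whose underlying set is exactly $X^{\tau}_{irr}(Y)$. The cardinality equals $d = \deg_{l}\widehat{A}(m,l)$ by the defining property of Boyer--Zhang's polynomial: the $l$-degree tallies, with built-in degree weights, the preimages in $X(Y - \op{nbhd}(K))$ of a generic point of $(\{\tau\} \times \C^{*})/\Z_{2}$ under the restriction-to-boundary map.

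Next, I would promote the smoothness of $X^{\tau}_{irr}(Y)$ at each point to transversality of the Lagrangian intersection $L_{0} \cap L_{1} \subset X^{\tau}_{irr}(\Sigma)$. Using Goldman's identifications of tangent spaces with group cohomology $H^{1}(-; \op{Ad}\rho)$ for the two handlebodies, for $\Sigma$, and for $Y - K$, a Mayer--Vietoris argument identifies the Zariski tangent space of $X^{\tau}_{irr}(Y)$ at $\rho$ with $T_{\rho} L_{0} \cap T_{\rho} L_{1}$, cut down by the meridian-trace constraint. Under our hypothesis this vanishes at each of the $d$ points, so $L_{0}$ and $L_{1}$ meet transversely inside $X^{\tau}_{irr}(\Sigma)$ at those $d$ isolated points.

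Finally, at each transverse intersection point, a holomorphic Darboux chart presents the germ of the situation as the zero section and the graph of $dQ$ inside a cotangent bundle, with $Q$ a nondegenerate holomorphic quadratic form vanishing at the origin. Bussi's construction then identifies the stalk of $P^{\bullet}_{L_{0}, L_{1}}$ at the point with the perverse sheaf of vanishing cycles of $Q$, which under the canonical shift dictated by the d-critical structure (together with the spin structures on $L_{0}, L_{1}$ that supply its orientation data) is $\Z$ placed in degree $0$. Summing the contributions over the $d$ isolated points yields $\HP^{*}_{\tau}(K) = \bb{H}^{*}(P^{\bullet}_{\tau}(K)) = \Z^{d}_{(0)}$.

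The principal obstacle is the final step: one must verify that the sign, orientation, and shift conventions built into Joyce's oriented d-critical framework and Bussi's perverse sheaf conspire to place the stalk precisely in degree $0$ at every transverse intersection point, with no parasitic twist or shift contributed by the local orientation data on the complex Lagrangians. A secondary point of care is the first step, where finiteness of the exceptional set of $\tau$ rests on generic smoothness of the meridian-trace map restricted to the reduced scheme $\mathscr{X}^{\tau}(Y-K)$ and on a finite classification of the non-irreducible strata meeting each slice.
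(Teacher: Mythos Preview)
Your proposal is essentially correct and follows the same overall strategy as the paper: exclude finitely many bad $\tau$, identify the relative character scheme with a reduced zero-dimensional hyperplane slice of $X(\G)$, use the scheme-theoretic smoothness to deduce transversality of $L_0 \cap L_1$, invoke the local computation that a transverse point contributes $\Z$ in degree $0$, and match the point count to $\deg_l \widehat{A}$.

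Two places where the paper does more work than your sketch indicates are worth flagging. First, the identification of $\mathscr{X}^{\tau}_{irr}(\G)$ with the slice $H_{\tau} \cap X_0(\G)$ must hold as \emph{schemes}, not just as sets, in order to transfer reducedness of the slice back to smoothness of $\mathscr{X}^{\tau}_{irr}(\G)$. The paper proves this separately (showing the natural surjection of coordinate rings is also injective), and it is here that the reducedness hypothesis on $\mathscr{X}(\G)$ enters in a substantive way; your sentence ``the fiber of the meridian-trace map is a reduced zero-dimensional subscheme'' conflates the slice of $X(\G)$ with $\mathscr{X}^{\tau}_{irr}(\G)$ without justifying the scheme-level isomorphism. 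Second, the tangent-space step requires \emph{parabolic} group cohomology $H^1_{par}(-;\op{Ad}\rho)$, not Goldman's ordinary $H^1$; your phrase ``cut down by the meridian-trace constraint'' gestures at this, but the paper develops the parabolic Mayer--Vietoris lemma explicitly to make the clean-intersection criterion go through. Neither point invalidates your outline, but both are genuine technical content rather than bookkeeping. Your stated ``principal obstacle'' about shift and orientation conventions is handled in the paper by citing the relevant result from \cite{abou-man}, so you need not worry about it further.
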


In fact, in Section~\ref{section:constantdim1} we derive precise conditions on the values of $\tau$ for which we can guarantee that \eqref{eq:degd} holds. 

To be able to use Theorem~\ref{thm:degd}, we need to identify classes of knots $K$ that satisfy the hypotheses. There are geometric conditions that ensure that the character scheme is one-dimensional. For example, it suffices for the knot $K$ to be either
\begin{itemize}
\item {\em small}, i.e., such that $Y-K$ does not contain a closed, orientable, essential surface; or
\item {\em slim}, i.e., hyperbolic and such that every component of ${X}^{\tau}(Y - K)$ which contains an irreducible representation also contains a discrete, faithful representation.
\end{itemize}

When one of these conditions is satisfied, one still needs to check by hand whether the character scheme is reduced. This can be done in practice, for example, for two-bridge knots, torus knots, and some pretzel knots. 

\begin{theorem}
\label{thm:concrete}
Suppose that $K \sub S^3$ is a two-bridge knot, a torus knot, or a $(-2,3, 2n+1)$ pretzel knot where $n \neq 0,1,2$ and $2n+1$ is not divisible by $3$. Then, for all but finitely many $\tau \in (-2,2)$, the equality \eqref{eq:degd} is satisfied. 
\end{theorem}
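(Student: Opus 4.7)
The plan is to verify, case by case, that each of the three families of knots satisfies the hypotheses of Theorem~\ref{thm:degd}, namely that the character scheme $\mathscr{X}^{\tau}(S^3-K)$ is reduced and one-dimensional. Once this is done, the conclusion \eqref{eq:degd} follows immediately from Theorem~\ref{thm:degd}. The one-dimensionality will always come from one of the two geometric conditions (smallness or slimness) isolated after Theorem~\ref{thm:degd}; the harder input is reducedness, which must be extracted from an explicit defining equation of the character variety in each family.

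For two-bridge knots, I would invoke the classical result of Hatcher--Thurston that every two-bridge knot is small, which forces $\dim \mathscr{X}(S^3-K)\leq 1$, and the component containing an irreducible representation is exactly one-dimensional. To see reducedness, I would use Riley's description of the irreducible $\SL(\C)$ character variety of a two-bridge knot as the vanishing locus of a single polynomial (the Riley polynomial), and then appeal to the theorem of Macasieb--Petersen--van~Luijk stating that this scheme-theoretic presentation cuts out a reduced affine curve (equivalently, the Riley polynomial is squarefree). For torus knots $T(p,q)$, the character variety admits a completely explicit description going back to Klassen (indexed by pairs $(a,b)$ of integers with $0<a<p$, $0<b<q$, $a\equiv b \bmod 2$), each component being a smooth rational curve, and reducedness can be read off directly from this parametrization.

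For the $(-2,3,2n+1)$ pretzel knots $K_n$, the hypothesis that $n\neq 0,1,2$ and $\gcd(2n+1,3)=1$ guarantees that $K_n$ is hyperbolic (the excluded values are the torus and cable-knot exceptions in this family). I would invoke the slimness of these knots, as established by Mattman in his study of their $\SL(\C)$ character varieties, to obtain one-dimensionality of the irreducible part. Mattman gives explicit defining polynomials for the nontrivial components of $\mathscr{X}(S^3-K_n)$, and I would check (or cite from his work) that these polynomials are squarefree for the values of $n$ in question, thereby obtaining reducedness.

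The main obstacle is the reducedness verification for the pretzel family, since here one does not have a uniform, short defining equation as in the two-bridge case: the relevant polynomials are somewhat intricate and depend on $n$, so one has to either argue generically in $n$ or package Mattman's case analysis into a clean statement. For the two-bridge and torus families the argument is essentially a citation of known character-variety computations combined with Theorem~\ref{thm:degd}, so the real content of the proof lies in carefully identifying, in each case, the correct reference or explicit computation that yields both the dimension count and the absence of nilpotents in the coordinate ring of $\mathscr{X}^{\tau}(S^3-K)$ for all but finitely many $\tau \in (-2,2)$.
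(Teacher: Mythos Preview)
Your approach is essentially the same as the paper's: verify Assumptions~\ref{assumption:dim1} (one-dimensionality, via smallness for two-bridge and torus knots and slimness for the pretzel family) and~\ref{assumption:reduced} (reducedness) case by case, then invoke Theorem~\ref{thm:degd}. The one point where the paper is more efficient is your ``main obstacle'': rather than extracting reducedness for the $(-2,3,2n+1)$ pretzel knots from Mattman's defining polynomials, the paper simply cites Le--Tran \cite{le-tran}, who verified \ref{assumption:reduced} for this entire family (and likewise cites \cite[Sec.~2]{boden-cur} for two-bridge and torus knots rather than the references you name).
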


For specific knots, we can study the character scheme in more detail and calculate the sheaf-theoretic Floer cohomology  $\HP^*_{\tau}(K)$ even for non-generic $\tau$. For example, when $K=3_1$ is the trefoil, we find that
$$\HP^*_{\tau}(3_1)=\begin{cases}
\Z_{(0)} & \text{if } \tau \in (-2,2) \setminus \{\sqrt{3}, - \sqrt{3}\},\\
0 &\text{if } \tau \in \{\sqrt{3}, -\sqrt{3}\}.
\end{cases}$$

On the other hand, for the figure-eight knot $K=4_1$, we have
$$\HP^*_{\tau}(4_1)= \Z^2_{(0)}, \ \ \text{for all } \tau \in (-2,2).$$

In general, the invariant  $\HP^*_{\tau}(K)$ does not always have to be supported in degree zero. An example of this is given by considering connected sums of knots in $S^3$, which may have higher-dimensional character varieties. 

\begin{theorem}
\label{thm:finalconnectedsum} For $i=1,2$, suppose that $K_i \sub S^3$ is a two-bridge knot, a torus knot, or a $(-2,3, 2n+1)$ pretzel knot where $n \neq 0,1,2$ and $2n+1$ is not divisible by $3$.  Let $K= K_1 \# K_2$. Then, for all but finitely many $\tau \in (-2,2)$, we have that $\HP^*_{\tau}(K)$ is supported in degrees $-1$ and $0$ and, in fact, 
$$\HP^*_{\tau}(K) = \Z_{(-1)}^{ (\op{deg}_{\l} \widehat{A}(K_1)) \cdot (\deg_{\l} \widehat{A}(K_2))} \oplus \Z_{(0)}^{(\deg_{\l} \widehat{A}(K_1)) +(\deg_{\l} \widehat{A}(K_2))+ (\op{deg}_{\l} \widehat{A}(K_1)) \cdot (\deg_{\l} \widehat{A}(K_2))}.$$ 
\end{theorem}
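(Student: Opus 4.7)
The plan is to analyze $X^\tau_{irr}(S^3-K_1\#K_2)$ explicitly via van Kampen, identify the perverse sheaf of vanishing cycles on each component, and pass to hypercohomology. Set $d_i = \deg_{\l}\widehat{A}(K_i)$. By Theorem~\ref{thm:concrete}, for all but finitely many $\tau\in(-2,2)$, $X^\tau_{irr}(S^3-K_i)$ is a reduced scheme consisting of exactly $d_i$ points. We work with such generic $\tau$ throughout.

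\emph{Step 1 (Stratification).} Van Kampen gives $\pi_1(S^3-K_1\#K_2)\cong \pi_1(S^3-K_1)*_\mu\pi_1(S^3-K_2)$, amalgamated over the meridian, so a representation corresponds to a pair $(\rho_1,\rho_2)$ with $\rho_1(\mu)=\rho_2(\mu)$. Fix $M_0\in\SLC$ elliptic of trace $\tau$ and let $T=\operatorname{Stab}(M_0)\cong \C^*$. Imposing $\rho_i(\mu)=M_0$ reduces the $\SLC$-quotient to a $T$-quotient. A combined representation is irreducible iff at least one of $\rho_1,\rho_2$ is irreducible (an irreducible factor forbids a common invariant line); the (red,\,red) case is excluded as the combined representation then lies in the torus of $M_0$. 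On the (irr,\,irr) stratum, the fiber of $R^\tau_{M_0,irr}(K_i)$ over each $[\rho_i]$ is a $T$-orbit with kernel $\{\pm I\}$, so the diagonal $T$-quotient of the product $\C^*\times\C^*$ yields a single copy of $\C^*$; total $d_1d_2$ components each $\cong \C^*$. On the (red,\,irr) stratum, the unique abelian $\rho_1^{ab}$ with $\rho_1^{ab}(\mu)=M_0$ has full stabilizer $T$, absorbing all gauge freedom, so each $[\rho_2]$ contributes an isolated point; total $d_2$ points. Symmetrically (irr,\,red) contributes $d_1$ isolated points.

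\emph{Step 2 (Local perverse sheaf).} At each of the $d_1+d_2$ isolated points, $L_0$ and $L_1$ meet transversely (by the same argument as in Theorem~\ref{thm:degd}), so $P^\bullet_\tau$ is $\Z$ concentrated in degree $0$ there. On each $\C^*$ component $Z$, the intersection $L_0\cap L_1$ is clean along $Z$: the tangential direction corresponds to the infinitesimal $T$-gluing parameter, which lies in $T_z L_0 \cap T_z L_1$ but outside both tangent spaces' symplectic complements. Applying Bussi's vanishing-cycles construction for a clean Lagrangian intersection along a smooth complex $1$-manifold, together with the orientation data coming from the spin structures on $L_0$ and $L_1$, we obtain $P^\bullet_\tau|_Z\cong \Z_Z[1]$, the shifted constant sheaf.

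\emph{Step 3 (Hypercohomology).} Each isolated point contributes $\Z_{(0)}$; each $\C^*$ contributes $\bb{H}^k(\C^*,\Z_{\C^*}[1])=H^{k+1}(\C^*,\Z)$, which is $\Z$ in degrees $-1$ and $0$. Summing over all components gives $\Z^{d_1d_2}_{(-1)}\oplus\Z^{d_1d_2+d_1+d_2}_{(0)}$, as claimed.

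The main obstacle is Step~2, specifically verifying the absence of a nontrivial local-system twist on each $\C^*$ component. A priori the Bussi output could be $\Z_Z[1]\otimes\mathcal{L}$ for a rank-one local system $\mathcal{L}$ with $\pm 1$ holonomy; one must show $\mathcal{L}$ is trivial, which amounts to tracking the compatibility of the spin structures on $L_0$ and $L_1$ with the $T$-action parameterizing $Z$ and checking that no sign monodromy appears as one traverses $\C^*$. A secondary subtlety is confirming that for generic $\tau$ the closures of the $\C^*$ components are disjoint from the mixed isolated points, so the character scheme decomposes cleanly with no embedded (non-reduced) structure at the strata boundaries; this should follow from the reducedness in Theorem~\ref{thm:concrete} combined with a standard genericity argument on $\tau$.
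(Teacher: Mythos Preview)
Your approach is essentially the same as the paper's: decompose $X^{\tau}_{irr}(K_1\#K_2)$ via the amalgamated-product description into Type~I (mixed red/irr) and Type~II (irr/irr) components, identify the Type~II components as copies of $\C^*$ via the residual torus action, show the perverse sheaf restricts to a rank-one local system shifted by the dimension, and then argue that this local system is trivial.

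Two points to tighten. First, in Step~1 your claim that the (red,\,red) case yields a reducible combined representation, and that there is a \emph{unique} abelian $\rho_1^{ab}$ with $\rho_1^{ab}(\mu)=M_0$, implicitly uses that every reducible representation of $\pi_1(S^3-K_i)$ with elliptic meridian trace $\tau$ is in fact abelian (hence diagonal). This is false in general: a reducible non-abelian representation exists precisely when $\tau=2\cos(2\pi x)$ with $e^{4\pi i x}$ a root of the Alexander polynomial of $K_i$. The paper handles this by imposing an explicit Alexander-polynomial genericity condition on $\tau$ (their Assumption~\ref{assumption:repeatalexander}), which you should add to your finite list of excluded $\tau$.

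Second, you correctly flag the triviality of the local system on each $\C^*$ as the real content. The paper resolves this exactly along the line you sketch: the torus $T$ acts holomorphically and transitively on each Type~II component $\mc{C}$, and this action extends to the ambient Lagrangians $L_0,L_1$ (restricted to suitable open subsets $L_i^0$ of high codimension complement, so that the unique spin structures on $L_i$ restrict). Because the action is transitive on $\mc{C}$, both the spin-structure comparison map $TL_0|_{\mc{C}}\to TL_1|_{\mc{C}}$ and a polarization of the symplectic normal bundle can be taken to be pullbacks from a single point in $\mc{C}/\C^*$, forcing the Bussi twist $|W^+|$ to be constant. Your ``secondary subtlety'' about disjointness of strata is subsumed in the paper by proving directly that the full relative character \emph{scheme} $\mathscr{X}^{\tau}_{irr}(K_1\#K_2)$ is smooth (via smoothness of the fiber-product maps over $\mathscr{R}^{\tau}(S^2-\{p_1,p_2\})$), which simultaneously gives cleanness and rules out embedded structure.
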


We remark that in this paper we only considered the case $\tau \in (-2,2)$, so that the holonomy around the knot meridian is an elliptic element of $\SLC$. However, we expect that similar invariants exist for any $\tau \in \C^* \setminus \{-2,2\}$, and that they have similar properties. The only difficulty in carrying out the same constructions is that the topology of the spaces $X^{\tau}_{irr}(\Sigma)$ is less understood for $\tau \not \in (-2,2)$. Specifically, one would need to extend the results in Appendix I by showing that certain moduli spaces of stable $K(D)$-pairs (in the terminology of \cite{boden-yok}) are connected and simply connected.

We also expect that one can define {\em framed} versions of $P^{\bullet}_{\tau}(K)$ and $\HP^*_{\tau}(K)$, similar to the framed invariants $P^{\bullet}_{\#}(Y)$ and $\HP^*_{\#}(Y)$ of closed three-manifolds from \cite{abou-man}. These would take into account the reducible connections in addition to the irreducibles. Such a construction would be in fact closer to Kronheimer and Mrowka's singular knot instanton homology.

We end by raising a few questions for further investigation.

\qus
For an arbitrary knot $K \subset Y$, is $\HP^*_{\tau}(K)$ independent of $\tau$ (up to isomorphism), for generic $\tau \in (-2,2)$? 
\equs

\qus
Is the $\SLC$ Casson-Lin invariant additive, i.e., do we have $\chi_{\tau}(K_1 \# K_2) =\chi_{\tau}(K_1) + \chi_{\tau}(K_2)$ for any $K_1, K_2 \subset S^3$ and $\tau \in (-2,2)$? (See Theorem~\ref{theorem:euler} for a partial result in this direction.) 
\equs

\qus
If a knot $K \subset S^3$ satisfies $\HP^*_{\tau}(K)=0$ for all $\tau \in (-2,2)$, is $K$ the unknot?
\equs

The paper is organized as follows. Section~\ref{section:relativerepschemes} contains some background about relative character schemes and parabolic group cohomology. Section~\ref{section:definitionofinvariant} contains the definition of the knot invariants $P^{\bullet}_{\tau}(K)$ and $\HP^*_{\tau}(K)$, and the proof of Theorem~\ref{thm:main}. In Section~\ref{sec:tools} we develop some general tools for computing perverse sheaves of vanishing cycles. In Section~\ref{section:constantdim1} we apply these tools to relate our knot invariants to the $\widehat{A}$-polynomial; in particular, we prove Theorem~\ref{thm:degd}. Section~\ref{section:1dcomputations} contains concrete calculations for various small and slim knots; it is here where we prove Theorem~\ref{thm:concrete}. In Section~\ref{section:connectedsums} we study the invariants for connected sums of knots, and prove Theorem~\ref{thm:finalconnectedsum}. Finally, in the appendix (Section~\ref{section:parabolichiggs}), we establish a few facts about the topology of relative character varieties, which are used in Section~\ref{section:definitionofinvariant}.

\medskip

 \bf{Acknowledgements.}  We would like to thank Hans Boden, Julian Chaidez, Brian Conrad, Yasha Eliashberg, Tony Feng, Maxim Jeffs, Michael Kapovich, Nikolas Kuhn, Aaron Landesman, Ben Lim, Ikshu Neithalath, Dat Nguyen, Jacob Rasmussen, and Semon Rezchikov for helpful conversations during the preparation of this paper. We also thank the referee for comments on a previous version.

\section{Preparatory material}  \label{section:relativerepschemes}

This preparatory section introduces some material from algebraic geometry that will be needed throughout the paper. In particular, we introduce \emph{relative} versions of representation and character schemes. We also discuss \emph{parabolic group cohomology}, which is a variant of ordinary group cohomology. All of these objects have previously appeared in the literature (see e.g. \cite{kapovich}), although they seem to have been mostly considered implicitly. 

The material in this section is technical and the proofs will not be used in the remainder of the paper. The reader may therefore wish to treat this section as a reference once she has acquainted herself with the main definitions. 

\subsection{Representation varieties and their relative counterparts}  \label{subsection:relrepgeneralities} Let $\G= \langle g_1,\dots,g_k \mid r_1,\dots,r_l \rangle$ be a finitely-presented group. The $\op{SL}(2,\C)$-\emph{representation variety} of $\G$ is the set of group homomorphisms $$R(\G)= \op{Hom}(\G, \op{SL}(2,\C)).$$ 

If we view $\op{SL}(2,\C)$ as an algebraic subset of $\C^4$, then $R(\G)$ can naturally be viewed as an algebraic subset of $\C^{4k}$. Indeed, the condition that each $g_i \in \G$ maps to a matrix of determinant $1$ is described by a set of polynomial equations $f_1,\dots,f_k$. The relations $r_1,\dots,r_l$ then impose additional equations $f_{k+1},\dots,f_{k+l}$. 

We can also consider the \emph{representation scheme} \eq \label{equation:repscheme} \mathscr{R}(\G)= \op{Spec} \lt( \C[x_1,\dots,x_{4k}]/ (f_1,\dots,f_{k+l}) \rt).\eeq In the language of scheme theory, the representation variety $R(\G) \sub \mathscr{R}(\G)$ can be viewed as the reduced subscheme associated to $\mathscr{R}(\G)$.  

Representation varieties play an important role in the work of Abouzaid and the second author \cite{abou-man}. In the present paper, we need to consider certain generalizations which we call \emph{relative representation varieties}. Relative representation varieties are just subvarieties of ordinary representation varieties which parametrize representations $\G \to \op{SL}(2,\C)$ with fixed trace on certain conjugacy classes of $\G$. In order to define these objects precisely, however, it is useful to take a more abstract perspective. 

We begin by considering the following enlargement of the category of groups. 

\defi \label{definition: gp+} Let $\bf{Gp}^+$ be the category whose objects consist of a finitely-presented group $\G$ along with a set of distinct conjugacy classes $\fk{c}_1,\dots,\fk{c}_n$ (where $n \geq 0$ depends on the particular object). If $n=0$ the set of conjugacy classes is empty. An arrow $(\G; \fk{c}_1, \dots, \fk{c}_n) \to (\G'; \fk{c}_1', \dots, \fk{c}_m')$ is simply a morphism of groups which sends $\bigcup_i \fk{c}_i$ into $\bigcup_j \fk{c}_j'.$  \edefi

Observe that $\bf{Gp}^+$ contains the ordinary category of groups as a subcategory. 

Let $\C \bf{-alg}$ denote the category of commutative algebras over the complex numbers. Given an object $(\G; \fk{c}_1, \dots, \fk{c}_n) \in \bf{Gp}^+$ and a parameter $\tau \in \C$, we can consider the functor $$\mathscr{R}^{\tau}(\G; \fk{c}_1,\dots,\fk{c}_n): \C \bf{-alg} \to \bf{Sets}$$ taking $A \mapsto \{\rho: \G \to \op{SL}(2, A) \mid \op{Tr}(\rho(h)) = \tau \;  \text{for all } h \in \bigcup_i \fk{c}_i\}.$ 

To lighten the notation, we will usually write $\mathscr{R}^{\tau}(\G)$ in place of $\mathscr{R}^{\tau}(\G; \fk{c}_1,\dots,\fk{c}_n)$ unless we want to keep track of the conjugacy classes. If $n=0$, the choice of $\tau$ is irrelevant. In this case, we write $\mathscr{R}(\G)$ instead of $\mathscr{R}^{\tau}(\G)$. This notation will be clarified by following proposition; cf.\ \Cref{remark:schemenotation}. 

\prop \label{proposition:representable} \label{proposition:representablefunctor} The functor $\mathscr{R}^{\tau}(\G): \C \textup{\bf{-alg}} \to \textup{\bf{Sets}}$ is representable, i.e., there exists a  $\C$-algebra $\mc{A}^{\tau}(\Gamma)$ such that
$$ \mathscr{R}^{\tau}(\G)= \operatorname{Hom}_{\C\textup{\bf{-alg}}}(\mc{A}^{\tau}(\Gamma), \textendash).$$
 \eprop  
\pf  We first consider the case $n=0$. It is not hard to verify that $\mathscr{R}(\G)$ is in fact represented by the $\C$-algebra $\C[x_1,\dots,x_{4k}]/ (f_1,\dots,f_{k+l})$ introduced in \eqref{equation:repscheme}. Let us call this $\C$-algebra $\mc{A}(\G)$. It follows from general facts of category theory that $\mc{A}(\G)$ is the unique representative, up to canonical isomorphism. In particular, it is independent of the presentation of $\G$.  We refer the reader to \cite[Prop.\ 1.2]{lub-mag} for a detailed exposition of these arguments. 

If $n>0$, we choose a group element $c_i \in \fk{c}_i$ for $i=1,\dots,n$. The condition $\op{Tr}(\rho(c_i))=\tau$ now imposes additional polynomial equations $f_{k+l+1},\dots,f_{k+l+n}$. It can then be shown as in the $n=0$ case that \eqs \mc{A}^{\tau}(\G):= \mc{A}(\G)/ (f_{k+l+1},\dots,f_{k+l+n}) \eeqs represents the functor $\mathscr{R}^{\tau}(\G)$. The representative $\mc{A}^{\tau}(\G)$ is again unique up to canonical isomorphism, and in particular independent of the choice of $c_i \in \fk{c}_i$. \epf

Generalizing \eqref{equation:repscheme}, we define \eqs \mathscr{R}^{\tau}(\G):= \op{Spec} \mc{A}^{\tau}(\G). \eeqs We say that $\mathscr{R}^{\tau}(\G)$ is the \emph{relative representation scheme} associated to the parameter $\tau \in \C$ and to the object $(\G; \fk{c}_1,\dots,\fk{c}_n) \in \bf{Gp}^+$. In general $\mathscr{R}^{\tau}(\G)$ may be singular and non-reduced. The \emph{relative representation variety} $R^{\tau}(\G) \sub \mathscr{R}^{\tau}(\G)$ is the reduced subscheme associated to $\mathscr{R}^{\tau}(\G)$. 

\rmk \label{remark:schemenotation} If $A$ is a $\C$-algebra, then its image under the functor $\mathscr{R}^{\tau}(\G)$ is the set $\mathscr{R}^{\tau}(\G)(A)$. It follows from \Cref{proposition:representable} that this set coincides with the set of $A$-valued points of the scheme $\mathscr{R}^{\tau}(\G)$. This explains why we use the same notation to refer to a functor and to a scheme. \ermk

\ex \label{example: relrepexample} Suppose that $\G= \langle a_1, \dots, a_g, b_1, \dots, b_g, c_1, c_2 \mid r_1, \dots, r_m \rangle.$ Let $\fk{c}_1= \op{Conj}(c_1)$ and $\fk{c}_2= \op{Conj}(c_2).$ Then $\mathcal{A}(\G)$ is the $\C$-algebra formed from the polynomial algebra $$\C[x^{a_1}_{11}, x^{a_1}_{12}, 
x^{a_1}_{21}, x^{a_1}_{22}, \dots, x^{c_2}_{11}, x^{c_2}_{12},x^{c_2}_{21}, x^{c_2}_{22}]$$ by first modding out by the relations $\{ x^{\a}_{11}x^{\a}_{22}-x^{\a}_{12}x^{\a}_{21} -1 =0\}$ where $\a$ ranges over the generators of $\G$, and then modding out by the relations coming from $r_1,\dots,r_m$.  To obtain $\mathcal{A}^{\tau}(\G)$, one simply adds the relations $x^{c_1}_{11}+x^{c_1}_{22}-\tau=0$ and $x^{c_2}_{11}+x^{c_2}_{22}-\tau=0$. This example will be important in the sequel.
\eex

The algebraic group $\op{SL}_2$ acts by conjugation on $\mathscr{R}^{\tau}(\G)$. We define the \emph{relative character scheme} $\mathscr{X}^{\tau}(\G)$ to be the GIT quotient of this action. The \emph{relative character variety} $X^{\tau}(\G) \sub \mathscr{X}^{\tau}(\G)$ is the reduced subscheme associated to $\mathscr{X}^{\tau}(\G)$. We let $\mathscr{R}^{\tau}_{irr}(\G) \subset \mathscr{R}^{\tau}(\G)$ be the open subscheme corresponding to irreducible representations, and we define ${R}^{\tau}_{irr}(\G)$, $\mathscr{X}^{\tau}_{irr}(\G)$ and $X^{\tau}_{irr}(\G)$ similarly.

As in the case of ordinary representation and character schemes, it can be shown that a morphism $(\G; \fk{c}_1,\dots,\fk{c}_n) \to (\G'; \fk{c}_1',\dots,\fk{c}_m')$ in $\bf{Gp}^+$ induces morphisms $\mathscr{R}^{\tau}(\G') \to \mathscr{R}^{\tau}(\G)$ and $\mathscr{X}^{\tau}(\G') \to \mathscr{X}^{\tau}(\G)$. This is explained in \cite[p.\ 6]{lub-mag} for ordinary character schemes, and the arguments generalize to the relative case.

\subsection{Fiber products} \label{subsection: fiberproducts}

Suppose that $\mc{G}, \mc{H}_0, \mc{H}_1, \mc{J}$ are objects of $\bf{Gp}^+$ whose underlying groups are $\G, \Pi_0, \Pi_1$ and $\Pi_0 *_{\G} \Pi_1$ respectively. Suppose moreover that the following pushout diagram of groups is induced by arrows in $\bf{Gp}^+$:
\eq \label{equation:diagram0}
\begin{tikzcd}
\G \arrow[r] \arrow[d] & \Pi_1 \arrow{d} \\
\Pi_0 \arrow[r] & \Pi_0 *_{\G} \Pi_1.
\end{tikzcd}
\eeq

\lem \label{lemma:fiberrep} There is a canonical isomorphism of schemes $\mathscr{R}^{\tau}(\Pi_0 *_{\G} \Pi_1) \xrightarrow{\sim} \mathscr{R}^{\tau}(\Pi_0) \tms_{\mathscr{R}^{\tau}(\G)} \mathscr{R}^{\tau}(\Pi_1).$ \elem

\pf We will show that both schemes are spectra of $\C$-algebras which represent the same same functor. Indeed, one has: 
\begin{align*}
\mathscr{R}^{\tau}(\Pi_0) \tms_{\mathscr{R}^{\tau}(\G)} \mathscr{R}^{\tau}(\Pi_1) &=
\op{Hom}_{\C\textbf{-alg}}(\mathcal{A}^{\tau}(\Pi_0) \otimes_{\mathcal{A}^{\tau}(\G)} \mathcal{A}^{\tau}(\Pi_1), - ) \\
&= \op{Hom}_{\C\textbf{-alg}}(\mathcal{A}^{\tau}(\Pi_0), -) \tms_{\op{Hom}_{\C\textbf{-alg}}(\mathcal{A}^{\tau}(\G),-)} \op{Hom}_{\C\textbf{-alg}}(\mathcal{A}(\Pi_1)^{\tau}, -)  \\
&= \op{Hom}_{\textup{\bf{Gp}}^+}(\Pi_0, \op{SL}_2(-)) \tms_{\op{Hom}_{\textup{\bf{Gp}}^+}(\G, \op{SL}_2(-))} \op{Hom}_{\textup{\bf{Gp}}^+}(\Pi_1, \op{SL}_2(-)) \\
&= \op{Hom}_{\textup{\bf{Gp}}^+}(\Pi_0 *_{\G} \Pi_1, \op{SL}_2(-)) \\
&= \mathscr{R}^{\tau}(\Pi_0 *_{\G} \Pi_1).
\end{align*}
It now follows by Yoneda's lemma that there is a unique isomorphism $\mathscr{R}^{\tau}(\Pi_0 *_{\G} \Pi_1) \xrightarrow{\sim} \mathscr{R}^{\tau}(\Pi_0) \tms_{\mathscr{R}^{\tau}(\G)} \mathscr{R}^{\tau}(\Pi_1).$ \epf 
 
\prop[cf.\ Prop 2.10 in \cite{marche}] \label{proposition:surjective} Suppose that the group homomorphisms $\G \to \Pi_0$ and $\G \to \Pi_1$ in \eqref{equation:diagram0} are surjective. Then there is a unique isomorphism of schemes 
\eq \label{equation: characterisom} \mathscr{X}^{\tau}(\Pi_0 *_{\G} \Pi_1) \xrightarrow{\sim} \mathscr{X}^{\tau}(\Pi_0) \tms_{\mathscr{X}^{\tau}(\G)} \mathscr{X}^{\tau}(\Pi_1). \eeq
 \eprop
 
 \pf 
 
The surjectivity of the morphisms $ \G \to \Pi_i$ implies that $\mathscr{R}^{\tau}(\Pi_i) \to \mathscr{R}^{\tau}(\G)$ is a closed embedding of affine schemes.  Hence there is a surjective morphism $\mathcal{A}^{\tau}(\G) \to \mathcal{A}^{\tau}(\Pi_i).$ We let $I_i \sub \mathcal{A}^{\tau}(\G)$ be the kernel of this morphism. 

According to \Cref{lemma:fiberrep} we have a canonical isomorphism 
\begin{align*} \mathcal{A}^{\tau}(\Pi_0 *_{\G} \Pi_1) \xrightarrow{\sim} \mathcal{A}^{\tau}(\Pi_0) \otimes_{\mathcal{A}^{\tau}(\G)} \mathcal{A}^{\tau}(\Pi_1) =\mathcal{A}^{\tau}(\G)/ I_0 \otimes_{\mathcal{A}^{\tau}(\G)} \mathcal{A}^{\tau}(\G)/I_1. 
\end{align*}

We now take invariants under the action of $\op{SL}_2$ by conjugation.  By standard algebraic manipulations, we have $\lt( \mathcal{A}^{\tau}(\G)/ I_0 \otimes_{\mathcal{A}^{\tau}(\G)} \mathcal{A}^{\tau}(\G)/I_1 \rt)^{\op{SL}_2} = \lt( \mathcal{A}^{\tau}(\G) / (I_0 + I_1) \rt)^{\op{SL}_2}  = ( \mathcal{A}^{\tau}(\G) )^{\op{SL}_2} / (I_0 + I_1)^{\op{SL}_2}.$ For the last equality, we used the fact that $\op{SL}_2$ is linearly reductive, which implies that the functor of invariants $M \to M^{\op{SL}_2}$ on $\op{SL}_2$-modules is exact; see for example \cite[Proposition 4.37]{mukai}.

Next, we will need the following fact.

\begin{fact} Let $A$ be a $\C$-algebra with an action of a linearly reductive group $G$ and suppose that $I, J$ are ideals of $A$. Then $(I+J)^{G} = I^{G} + J^{G}.$  \end{fact}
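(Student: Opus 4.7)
The plan is to reduce the statement to the exactness of the $G$-invariants functor, which is the defining property of linear reductivity that was already invoked just above the Fact. Note first that the ideals $I, J$ appearing in the application are kernels of $G$-equivariant maps between $G$-algebras, hence $G$-stable; I will therefore prove the Fact under the (necessary) hypothesis that $I$ and $J$ are $G$-invariant ideals of $A$.

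The inclusion $I^G + J^G \subseteq (I+J)^G$ is trivial, since $I^G \subseteq I \subseteq I+J$, similarly for $J^G$, and the sum of invariants is invariant. For the reverse inclusion, I would consider the short exact sequence of $G$-modules
\[
0 \longrightarrow I \cap J \xrightarrow{\;x \mapsto (x, -x)\;} I \oplus J \xrightarrow{\;(i,j) \mapsto i + j\;} I + J \longrightarrow 0,
\]
in which all maps are $G$-equivariant because $I, J$ (and hence $I \cap J$ and $I+J$) are $G$-stable.

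Since $G$ is linearly reductive, the functor $M \mapsto M^G$ on $G$-modules is exact (this is the fact from \cite[Proposition 4.37]{mukai} already cited in the proof of \Cref{proposition:surjective}). Applying it to the sequence above yields a short exact sequence
\[
0 \longrightarrow (I \cap J)^G \longrightarrow I^G \oplus J^G \longrightarrow (I+J)^G \longrightarrow 0.
\]
In particular, the map $I^G \oplus J^G \to (I+J)^G$ sending $(i,j) \mapsto i+j$ is surjective, so every element of $(I+J)^G$ can be written as $i + j$ with $i \in I^G$ and $j \in J^G$. This gives $(I+J)^G \subseteq I^G + J^G$, completing the proof.

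There is essentially no obstacle here beyond ensuring the ideals are $G$-invariant so that the sequence above is a sequence of $G$-modules; the rest is a direct application of exactness of invariants under a linearly reductive group. This is exactly the pattern used in the preceding paragraph to rewrite the invariants of a tensor product, so the Fact fits naturally into the proof of \Cref{proposition:surjective}.
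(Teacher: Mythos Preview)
Your proof is correct and follows essentially the same approach as the paper: both use the short exact sequence $0 \to I \cap J \to I \oplus J \to I+J \to 0$ and apply exactness of the invariants functor to conclude. Your version is slightly more explicit about the $G$-stability hypothesis on the ideals and about deducing the nontrivial inclusion from surjectivity, but the argument is the same.
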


\pf
Consider the short exact sequence
$$ 0 \longrightarrow I \cap J  \longrightarrow I \oplus J  \longrightarrow I+J  \longrightarrow 0.$$
Using again the exactness of the functor of invariants, we get a short exact sequence
$$0 \longrightarrow (I \cap J)^G  \longrightarrow (I \oplus J)^G  \longrightarrow (I+J)^G  \longrightarrow 0.$$
On the other hand, we clearly have $(I \cap J)^G = I^G \cap J^G$ and $(I \oplus J)^G = I^G \oplus J^G$, and the cokernel of the map between them can be identified with $I^G + J^G$.
\epf 

Applying this fact for $G=\SL$, we can now write: 
\begin{align*}
( \mathcal{A}^{\tau}(\G) )^{\op{SL}_2} / (I_0 + I_1)^{\op{SL}_2}  &= ( \mathcal{A}^{\tau}(\G) )^{\op{SL}_2} / ( I_0^{\op{SL}_2} + I_1^{\op{SL}_2}  ) \\
&= ( \mathcal{A}^{\tau}(\G) )^{\op{SL}_2}/ I_0^{\op{SL}_2}  \otimes_{( \mathcal{A}^{\tau}(\G) )^{\op{SL}_2}} ( \mathcal{A}^{\tau}(\G) )^{\op{SL}_2}/ I_1^{\op{SL}_2} \\
&= ( \mathcal{A}^{\tau}(\G) / I_0 )^{\op{SL}_2}  \otimes_{( \mathcal{A}^{\tau}(\G) )^{\op{SL}_2}}  ( \mathcal{A}^{\tau}(\G) / I_1 )^{\op{SL}_2}.  
\end{align*}

It follows that $ \lt( \mathcal{A}^{\tau}(\G)/ I_0 \otimes_{\mathcal{A}^{\tau}(\G)} \mathcal{A}^{\tau}(G)/I_1 \rt)^{\op{SL}_2} =  ( \mathcal{A}^{\tau}(\G) / I_0 )^{\op{SL}_2}  \otimes_{( \mathcal{A}^{\tau}(\G) )^{\op{SL}_2}}  ( \mathcal{A}^{\tau}(\G) / I_1 )^{\op{SL}_2},$ which is equivalent to \eqref{equation: characterisom}.  \epf

\prop \label{proposition: pushoutisom}  Under the hypotheses of \Cref{proposition:surjective}, there is a unique isomorphism of schemes 
\eq \label{equation: irrcharacterisom} \mathscr{X}_{irr}^{\tau}(\Pi_0 *_{\G} \Pi_1) \xrightarrow{\sim} \mathscr{X}_{irr}^{\tau}(\Pi_0) \tms_{\mathscr{X}_{irr}^{\tau}(\G)} \mathscr{X}_{irr}^{\tau}(\Pi_1). \eeq
 \eprop

\pf It follows from general properties of the fiber product of schemes that there is an open embedding $\mathscr{X}_{irr}^{\tau}(\Pi_0) \tms_{\mathscr{X}_{irr}^{\tau}(\G)} \mathscr{X}_{irr}^{\tau}(\Pi_1) \to \mathscr{X}^{\tau}(\Pi_0) \tms_{\mathscr{X}^{\tau}(\G)} \mathscr{X}^{\tau}(\Pi_1) \simeq \mathscr{X}_{irr}^{\tau}(\Pi_0 *_{\G} \Pi_1).$  Since $\mathscr{X}_{irr}^{\tau}(\Pi_0 *_{\G} \Pi_1)$ is also an open subscheme of $\mathscr{X}_{irr}^{\tau}(\Pi_0 *_{\G} \Pi_1)$, it suffices to show that it has the same closed points as $\mathscr{X}_{irr}^{\tau}(\Pi_0) \tms_{\mathscr{X}_{irr}^{\tau}(\G)} \mathscr{X}_{irr}^{\tau}(\Pi_1)$. This can readily be checked by observing that, since $\G \to \Pi_i$ is surjective, a representation $\rho_i: \Pi_i \to \op{SL}(2, \C)$ is irreducible if and only if it pulls back to an irreducible representation of $\G \to \op{SL}(2, \C).$ 
\epf

\subsection{Parabolic group cohomology}  We make a brief digression from our discussion of relative representation and character schemes. As above, let $\G$ be a finitely-presented group. Let $G$ be a complex-algebraic group and let $\fk{g}$ be its Lie algebra. In the sequel, we only need to consider $G=\op{SL}(2,\C)$ and $\fk{g}= \fk{sl}(2, \C)$, but there is no reason to restrict the following discussion to this particular case.


Let us briefly recall the co-cycle construction of the group cohomology of $\G$ with coefficients in the representation $\op{Ad} \circ \rho: \G \to  \op{Aut}(\fk{g})$. Here $\op{Ad}: G \to \op{Aut}(\fk{g})$ is the adjoint representation, given by  $\op{Ad}(g): \xi \mapsto g\xi g^{-1}$.

Consider the abelian groups 
$$C^n(\G; \op{Ad} \rho):= \{ \text{functions}: \G \to \fk{g}\},$$ where the group structure is given by addition of functions. One can define chain maps $$d^{n+1}: C^n(\G, \op{Ad} \rho) \to C^{n+1}(\G, \op{Ad} \rho)$$ by a standard formula; see \cite[p.\ 59]{brown}.  In the case $n=1$ which is the only one which we will use, we have that 
\eqs d^2(\phi)(g_1, g_2) = \op{Ad}\rho(g_1) \phi(g_2)- \phi(g_1g_2) + \phi(g_1), \eeqs
for $g_1, g_2 \in \G$. 

The subgroup of elements $Z^n(\G; \op{Ad} \rho)= \op{ker} d^{n+1}$ are said to be $n$-cocycles, while the subgroup of elements $B^n(\G; \op{Ad} \rho)=\op{im} d^n$ are $n$-coboundaries. The group $$H^n(\G; \op{Ad} \rho)= Z^n(\G; \op{Ad} \rho)/ B^n(\G; \op{Ad} \rho)$$ is the $n$-th cohomology group of $\G$ with coefficients in the representation $\op{Ad}  \rho: \G \to \fk{g}.$  

We now consider the data of an object $(\G; \fk{c}_1, \dots, \fk{c}_n)$ of $\bf{Gp}^+$. 

We let $$Z_{par}^1((\G; \fk{c}_1, \dots, \fk{c}_n); \op{Ad} \rho) \sub Z^1(\G; \op{Ad} \rho)$$ to be the set of $1$-cocycles whose restriction to any element of $\bigcup_i \fk{c}_i$ is a boundary. Said differently, an element $\phi \in Z^1(\G; \op{Ad} \rho)$ is contained in the subset $ Z_{par}^1((\G; \fk{c}_1, \dots, \fk{c}_n); \op{Ad} \rho)$ if, for all $g \in \bigcup_i \fk{c}_i$, there exists $\mu \in \fk{g}$ such that $\phi(g)= \mu- \op{Ad}_{\rho}(g) \mu$. The elements of $Z_{par}^1((\G; \fk{c}_1, \dots, \fk{c}_n); \op{Ad} \rho)$ are said to be \emph{parabolic} 1-cocycles. 

The quotient $$H^1_{par}((\G; \fk{c}_1, \dots, \fk{c}_n); \op{Ad} \rho ):= Z_{par}^1((\G; \fk{c}_1, \dots, \fk{c}_n); \op{Ad} \rho) / B^1(\G; \op{Ad} \rho)$$ is called the \emph{first parabolic cohomology group} of $\G$ with coefficients in $\op{Ad} \rho: \G \to \fk{g}.$  If the set of conjugacy classes is empty, this recovers the ordinary notion of group cohomology. There is also a notion of parabolic group cohomology in higher degrees. Since we will not use it in this paper, we refer the interested reader to \cite[p.\ 347]{hida}.

For future reference, note that an arrow $\phi: (\G; \fk{c}_1, \dots, \fk{c}_n) \to (\G'; \fk{c}_1', \dots, \fk{c}_m') $ induces a map of abelian groups $H^1_{par}((\G'; \fk{c}_1', \dots, \fk{c}_n'); \op{Ad} \rho) \to H^1_{par}((\G; \fk{c}_1, \dots, \fk{c}_n); \op{Ad} \rho \circ \phi)$. 

\rmk \label{remark:parabolicnotation} To lighten the notation, we will often write $H^1_{par}(\G; \op{Ad} \rho )$ and $Z^1_{par}(\G; \op{Ad} \rho)$ when the conjugacy classes are understood from the context. \ermk

\subsection{Tangent spaces} The relevance of parabolic group cohomology to our discussion of relative representation and character schemes is evidenced by the following proposition. 

\prop \label{proposition:tangentspace} Let $(\G; \fk{c}_1,\dots,\fk{c_n}) \in \bf{Gp}^+$ and suppose that $\rho \in R^{\tau}(\G)$ for some $\tau \neq \pm 2$. We then have:
\begin{enumerate}
\item $T_{\rho} \mathscr{R}^{\tau}_{}(\G) \simeq Z^1_{par}(\G; \op{Ad} \rho).$
\item $T_{[\rho]} \mathscr{X}^{\tau}_{irr}(\G) \simeq H^1_{par}(\G; \op{Ad} \rho).$ 
\end{enumerate}
\eprop

In order to prove the first statement, it is convenient to introduce the set $\mc{T}_{\rho} \mathscr{R}_{}^{\tau}(\G)$. This is defined as the set of all maps $\rho: \G \to \op{SL}(2, \C[\e]/(\e^2))$ satisfying the conditions:
\begin{itemize}
\item[(i)] $\op{Tr}(\rho(h))= \tau$ for all $h \in \bigcup_i \mathfrak{c}_i$
\item[(ii)] The composition $\G \to \op{SL}(2, \C[\e]/(\e^2)) \to \op{SL}(2, \C)$ agrees with $\rho,$ where $\op{SL}(2, \C[\e])/(\e^2) \to \op{SL}(2, \C)$ is the map induced by the $\C$-algebra morphism $\C[\e]/(\e^2) \to \C$ mapping $\e \to 0.$
\end{itemize}

The set $\mc{T}_{\rho} \mathscr{R}_{}^{\tau}(\G)$ can be canonically identified with the tangent space $T_{\rho} \mathscr{R}^{\tau}_{}(\G)$; see \cite[p.\ 14]{sik}.  This identification is a general consequence of the fact that $\mathscr{R}_{}^{\tau}(\G)$ represents a functor $\C\bf{-alg} \to \bf{Sets}$, and turns out to be convenient for the purpose of proving \Cref{proposition:tangentspace}.

\pf[Proof of \Cref{proposition:tangentspace} (1)] By the previous discussion, it is enough to show that there is a bijection $Z^1_{par}(\G; \op{Ad}(\rho)) \to \mc{T}_{\rho} \mathscr{R}_{}^{\tau}(\G).$ As in \cite{sik}, consider the map 
\begin{align*}
Z^1_{par}(\G; \op{Ad} \rho) &\to \mc{T}_{\rho} \mathscr{R}_{}^{\tau}(\G) \\
\s &\to (\g \in \G \mapsto (I+ \s(\g) \e) \rho(\g)).
\end{align*}

Let us first check that this map is well defined, i.e. that the image satisfies conditions (i) and (ii) above. Condition (ii) is clearly satisfied. To check condition (i), suppose that $h \in \bigcup_i \mathfrak{c}_i.$ Then $\op{Tr}( (I+ \s(h) \e)\rho(h))= \op{Tr}(\rho(h))+ \e \op{Tr}(\s(h) \rho(h)) = \tau+ \e \op{Tr}(\s(h) \rho(h)).$  But since $\s \in Z^1_{par}(\G; \op{Ad} \rho),$ it follows that $\s(h)= \b- \rho(h) \b \rho(h)^{-1}$ for some $\b \in \fk{g}$ depending on $h.$  Hence $\op{Tr}(\s(h) \rho(h))=0$ as desired. 

Injectivity follows from the analogous statement on \cite[page 15]{sik}.  To show surjectivity, we note that \cite[page 15]{sik} shows that every element of $\mc{T}_{\rho} \mathscr{R}^{\tau}_{}(\G)$ can be written in the form $(\g \mapsto (I+ \s(\g) \e) \rho(\g))$ for some $\s \in Z^1(\G; \op{Ad}(\rho)).$   Hence it suffices to check that $\s \in Z^1_{par}(\G; \op{Ad}(\rho)) \sub Z^1(\G; \op{Ad}(\rho)).$

In other words, given $h \in \G$ with $\op{Tr}(\rho(h)) = \tau \neq \pm 2$, we must check that $\op{Tr}((I+ \s(h) \e)(\rho(h)))= \op{\tau} \Leftrightarrow \op{Tr}(\s(h) \rho(h))=0  \Leftrightarrow  \s(h)= \b - \rho(h) \b \rho^{-1}(h),$ for some $\b \in \fk{g}.$ This follows from the following linear algebra fact. \epf

\begin{fact} \label{fact: trace} Let $B \in \op{SL}(2, \C)$ and $A \in \fk{sl}(2, \C)$ such that $\op{Tr}(B) \neq \pm 2$ and $\op{Tr}(AB)=0$. Then there exists $C \in \fk{sl}(2, \C)$ such that $A= C- BCB^{-1}.$ \end{fact}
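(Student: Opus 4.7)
The plan is to reduce to a normal form for $B$ and then directly analyze the linear operator $T_B : \fk{sl}(2,\C) \to \fk{sl}(2,\C)$ defined by $T_B(C) = C - BCB^{-1}$, showing its image consists exactly of matrices $A$ with $\op{Tr}(AB) = 0$.

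First I would reduce to the diagonal case. Since $\op{Tr}(B) \neq \pm 2$, the matrix $B$ has two distinct eigenvalues $\lambda, \lambda^{-1}$ with $\lambda^2 \neq 1$, so it is diagonalizable. Conjugation is compatible with the problem in the following sense: if $B = PB'P^{-1}$ and we set $A' = P^{-1}AP$, then $\op{Tr}(A'B') = \op{Tr}(AB) = 0$, and producing $C' \in \fk{sl}(2,\C)$ with $A' = C' - B'C'B'^{-1}$ yields $C = PC'P^{-1}$ satisfying $A = C - BCB^{-1}$. Therefore I can assume $B = \op{diag}(\lambda, \lambda^{-1})$ with $\lambda^2 \neq 1$.

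Next, I would use the eigenspace decomposition of $\fk{sl}(2,\C)$ under the adjoint action $\op{Ad}(B)$. With $B$ diagonal, $\fk{sl}(2,\C)$ splits as $\fk{h} \oplus \fk{n}_+ \oplus \fk{n}_-$, where $\fk{h}$ is the diagonal traceless matrices, $\fk{n}_+$ the strictly upper-triangular, and $\fk{n}_-$ the strictly lower-triangular matrices, with $\op{Ad}(B)$ acting by $1$, $\lambda^2$, $\lambda^{-2}$ respectively. Therefore $T_B = \op{id} - \op{Ad}(B)$ acts by $0$ on $\fk{h}$ and by nonzero scalars $1 - \lambda^{\pm 2}$ on $\fk{n}_\pm$. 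Its image is thus precisely $\fk{n}_+ \oplus \fk{n}_-$, the space of off-diagonal traceless matrices.

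Finally, writing $A = \begin{pmatrix} a & b \\ c & -a \end{pmatrix}$, a direct computation gives $\op{Tr}(AB) = a(\lambda - \lambda^{-1})$. Since $\lambda \neq \pm 1$ we have $\lambda - \lambda^{-1} \neq 0$, so the hypothesis $\op{Tr}(AB) = 0$ forces $a = 0$, i.e.\ $A \in \fk{n}_+ \oplus \fk{n}_-$. By the previous step, $A$ lies in the image of $T_B$, giving the desired $C$. There is no real obstacle here; the only mild subtlety is ensuring the conjugation reduction preserves both hypotheses, which follows from the invariance of $\op{Tr}(AB)$ under simultaneous conjugation of $A$ and $B$.
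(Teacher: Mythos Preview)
Your proof is correct. The paper's argument is shorter and avoids diagonalizing: it simply observes the containment $\{C - BCB^{-1} : C \in \fk{sl}(2,\C)\} \subset \{A \in \fk{sl}(2,\C) : \op{Tr}(AB) = 0\}$ (which follows immediately from conjugation-invariance of trace) and then checks that both sides are $2$-dimensional, forcing equality. Your approach instead conjugates $B$ to diagonal form and explicitly identifies both subspaces with $\fk{n}_+ \oplus \fk{n}_-$ via the root-space decomposition. Your version is more concrete and makes the structure transparent, while the paper's dimension count is slicker and coordinate-free; in particular, the inclusion step in the paper's version requires no normal form at all, and the dimension of the image of $T_B$ can be read off from rank--nullity once one notes that $\ker T_B$ is the ($1$-dimensional) centralizer of $B$ in $\fk{sl}(2,\C)$.
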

\pf 
Fixing $B$, evidently we have $\{ C-BCB^{-1} \mid  C \in \fk{sl}(2, \C) \} \sub \{A \in \fk{sl}(2, \C) \mid \op{Tr}(AB)=0\}.$ By direct computation, one can check that both the left hand side and the right hand side are two dimensional, which gives the desired equality. \epf 

The proof of \Cref{proposition:tangentspace} (2) is essentially identical to that of \cite[Theorem 53]{sik}, except that one considers the relative representation and character schemes $\mathscr{R}_{irr}^{\tau}(\G)$ and $\mathscr{X}^{\tau}_{irr}(\G)$ in place of the ordinary representation and character schemes (which are denoted by $\mc{H}om(\G, G)$ and $\mc{X}_{G}(\G)$ in Sikora's notation).  

The key tool from algebraic geometry is the Luna Slice Theorem; see for example \cite{drezet}. If $\rho$ is an irreducible representation, its orbit in $\mathscr{R}^{\tau}(\G)$ can be shown to be closed. Since $\rho$ is irreducible, its stabilizer $S_{\rho}=\{\pm I\}$ is precisely the center of $G=\op{SL}(2,\C)$. The Luna Slice Theorem then implies that there exists a closed subscheme $S \sub \mathscr{R}^{\tau}(\G)$, usually called an \emph{ \'{e}tale slice}, with the following properties. We have $[\rho] \in S$ and there is a natural map $(G/S_{\rho}) \tms S \to \mathscr{X}^{\tau}(\G)$ sending $(g,s) \mapsto gs$ which is \'{e}tale. Similarly, the projection map $S \to \mathscr{X}^{\tau}(\G)$ is \'{e}tale.  

One useful consequence of the existence of \'{e}tale slices which will be used later on is the fact that $R^{\tau}_{irr}(\G)$ is a $G^{ad}$-bundle over $X^{\tau}_{irr}(\G)$; see \cite[Ex.\ 2.1.1.4.(iii)]{schmitt}. Here $G^{ad}: = G/Z(G)=G/S_{\rho}$. 

\pf[Proof of \Cref{proposition:tangentspace} (2)]  The inclusion $\mathscr{X}^{\tau}_{irr}(\G) \to R^{\tau}_{irr}(\G)$ induces a surjective morphism of tangent spaces $$T_{\rho} R^{\tau}_{irr}(\G) \to T_{[\rho]} \mathscr{X}^{\tau}_{irr}(\G).$$ We saw in (1) that $T_{\rho} R^{\tau}_{irr}(\G)= Z(\G; \op{Ad} \rho)$, so it only remains to show that the kernel is $B^1(\G; \op{Ad} \rho)$. This can be done as in the proof of \cite[Theorem 53]{sik} by appealing to the existence of an \'{e}tale slice as discussed above, along with the fact that $S_{\rho}= Z(G)$ necessarily acts trivially on this slice. \epf

\defi A representation $\rho \in R^{\tau}(\G)$ is said to be {\em regular} if the scheme $\mathscr{R}^{\tau}(\G)$ is regular at $[\rho]$. It is said to be reduced if $\mathscr{R}^{\tau}(\G)$ is reduced at $[\rho]$. \edefi

We end this section by recording the following proposition, which is an analog of Proposition 2.3 and Lemma 2.4 in \cite{abou-man}. It can be proved by modifying the arguments provided in \cite{abou-man} and \cite[Corollary 55]{sik}  but we omit the details.

\prop \label{proposition:regularitycriterion} An irreducible representation $\rho \in R^{\tau}(\G)$ is regular (reduced) if and only if the scheme $\mathscr{X}^{\tau}(\G)$ is regular (reduced) at $[\rho]$. \eprop

\section{Definition of the knot invariant} \label{section:definitionofinvariant}

We now pass to the construction of our knot invariant. The general scheme is similar to that of \cite[Sec.\ 7.1]{abou-man}. Starting from a Heegaard splitting $Y= U_0 \cup_{\S} U_1$, we get complex Lagrangians $L_0= X^{\tau}_{irr}(U_0)$ and $L_1= X^{\tau}_{irr}(U_1)$ in the relative character variety $X^{\tau}_{irr}(\S)$. Appealing to a construction of Bussi in \cite{bussi}, we define a perverse sheaf $P^{\bullet}_{L_0, L_1}$ on the intersection $L_0 \cap L_1= X^{\tau}_{irr}(Y)$. We then show that $P^{\bullet}_{L_0, L_1}$ is suitably independent of the choice of Heegaard splitting and thus defines a topological invariant. This will prove Theorem~\ref{thm:main}.

\subsection{Preliminary definitions}  \label{subsection:preliminarydefs}  Although the basic strategy for defining our invariant mirrors that of \cite[Sec.\ 7.1]{abou-man}, some technicalities occur in our setting which were not present in the original construction. These are mainly due to working with manifolds with boundary. In particular, we eventually wish to appeal to work of Juh\'{a}sz, D. Thurston and Zemke in order to prove the naturality of our invariant. This leads us to use the language of sutured manifolds and to introduce certain auxiliary categories. 

\defi A \emph{sutured manifold} $(M, \g)$ is the data of a compact, oriented $3$-manifold $M$ with nonempty boundary, along with a disjoint union of oriented simple closed curves $\g= \bigcup_i \g_i \sub \d M$. One requires that $\g$ separates $\d M$ into two components $R^+(\g)$ and $R^-(\g)$, where $R^+(\g) \cup R^-(\g)= \d M$ and $\d R^+(\g)= \g$, $\d R^-(\g)= - \g$. \edefi

The set of sutured manifolds naturally forms a category whose arrows are diffeomorphisms $(M, \g) \to (M', \g')$ sending $R^{+} (\g)$ to $R^{+}(\g')$ and $R^{-} (\g)$ to $R^{-}(\g')$.

Let us also introduce the category $\bf{Knot}_{**}$ of doubly-pointed knots. An object $(Y, K, p,q)$ of $\bf{Knot}_{**}$ consists of an oriented $3$-manifold $Y$, an oriented knot $K \sub Y$, and an ordered pair of basepoints $p,q \in K \sub Y$. A morphism $(Y, K, p, q) \to (Y', K', p', q')$ is an orientation-preserving diffeomorphism $Y\to Y'$ which sends $K,p,q$ to $K',p', q'$ respectively. 

There is a functor from $\bf{Knot}_{**}$ to the category of sutured manifolds which will be very important to us and which we refer to as the \emph{(spherical) blowup}. This was also considered by Juh\'{a}sz, Thurston and Zemke in \cite[Definition 2.5]{juhasz-thurston}. Given a doubly-pointed knot $(Y,K, p, q)$, for every $x \in K$, let $N_xK= T_xY/T_xK$ be the fiber of the normal bundle of $K$ over $x$, and let $c_x := UN_xK = (N_xK \setminus \{0\})/\R_+$ be the fiber of the unit normal bundle to $K$ over $x$. The blowup $(E_K, \g)$ of $(Y,K,p,q)$ is obtained by replacing $x$ with $c_x$ for all $x \in K$ and letting $\g=c_p \cup c_q$. Note that the interior of $E_K$ is diffeomorphic to the knot exterior of $K \sub Y$. 

The orientations on $Y$ and on $K$ induce an orientation on $NK$ and hence on $\d E_K$. This orientation is compatible with the boundary orientation of $\d E_K$ inherited from the orientation of $E_K$ (which is itself inherited from the orientation of $Y$).  We orient $c_p$ coherently with respect to the orientation of $N_pK$ and $c_q$ incoherently with respect to the orientation of $N_qK$. This uniquely determines $R^+(\g)$ and $R^-(\g)$. 

A detailed construction of the blowup functor is provided in \cite{aro-kan}. It is shown in particular that a morphism $(Y, K, p, q) \to (Y', K', p', q')$ induces a morphism $(E_K, \g) \to (E_{K'}, \g')$. The image of $\bf{Knot}_{**}$ under the blowup functor forms a subcategory of the category of sutured manifolds. We call this subcategory $\bf{SutKnot}$. In the sequel, we will view the blowup as a functor $\op{Bl}: \bf{Knot}_{**} \to \bf{SutKnot}$ taking $(Y,K, p, q) \mapsto (E_K, \g)$.

Given a surface $\S$ with nonempty boundary, an attaching set is the data of a disjoint union $\mathbf{\a} = \bigcup_i \a_i$ of pairwise disjoint simple closed curves such that each component of $\S- \bigcup_i \a_i$ contains a component of $\d \S$. An attaching set is said to be maximal if it is not contained in a strictly larger attaching set, i.e. if the collection of curves is maximal. 

\defi Given $(E_K, \g) \in \bf{SutKnot}$, a \emph{Heegaard splitting} $\mc{H}$  consists in a decomposition $E_K= U_0 \cup U_1$ into handlebodies, where $U_0 \cap U_1= \S$ is a surface with boundary $\d \S= c_p \cup c_q$.  We require moreover that there exist maximal attaching sets $\mathbf{\a}, \mathbf{\b} \sub \S$, where the $\a_i$ and $\b_i$ bound disks in $U_0$ and $U_1$ respectively.  \edefi

It is shown in \cite[Lem.\ 2.15]{juhasz-thurston} that any sutured manifold, so in particular every object of $\bf{SutKnot}$, admits a Heegaard splitting.

\subsection{Geometric setup} \label{subsection:symplecticgeom}

Let us now fix an object $(Y, K, p,q)$ of $\bf{Knot}_{**}$ and let $(E_K, \g)$ be its blowup. We fix a Heegaard splitting $\mc{H}= (\S, U_0, U_1)$ for $(E_K, \g)$ and choose a basepoint $x_0 \in c_p$.  

Such data determines the following commutative diagram in the category $\bf{Gp}^+$: 
\eq \label{equation: diagram1}
\begin{tikzcd}
\lt(\pi_1({\S}, x_0); \op{Conj}([c_p]), \op{Conj}([c_q])  \rt) \arrow[r] \arrow[d] & (\pi_1({U}_1, x_0); \op{Conj}([c_p])) \arrow{d} \\
(\pi_1({U}_0, x_0); \op{Conj}([c_p])) \arrow[r] & (\pi_1(E_K, x_0); \op{Conj}([c_p])).
\end{tikzcd}
\eeq

It follows from the existence of two maximal attaching sets bounding disks that the inclusions $\iota_i: {\S} \to {U}_i$ induce surjective maps $(\iota_i)_*: \pi_1({\S}, x_0) \to \pi_1( {U}_i, x_0).$  By van Kampen's theorem, the underlying diagram of groups is isomorphic to a pushout diagram of the form \eqref{equation:diagram0}. 

It now follows by \Cref{proposition: pushoutisom} that there is a unique isomorphism 
\eq \label{equation:fiberprod} \mathscr{X}_{irr}^{\tau}(\pi_1(E_K,x_0)) \simeq \mathscr{X}^{\tau}_{irr}(\pi_1({U}_0,x_0)) \tms_{\mathscr{X}^{\tau}_{irr}(\pi_1(\S, x_0))} \mathscr{X}^{\tau}_{irr}(\pi_1(U_1,x_0)). \eeq
In other words, the scheme-theoretic intersection of $\mathscr{X}^{\tau}_{irr}(\pi_1({U}_0,x_0))$ and $\mathscr{X}^{\tau}_{irr}(\pi_1({U}_1,x_0))$ can be identified with $\mathscr{X}_{irr}^{\tau}(\pi_1(E_K,x_0))$.

\prop \label{proposition: smoothcharacterscheme}For $\tau \neq \pm 2$, the relative character scheme $\mathscr{X}^{\tau}_{irr}(\pi_1(\S, x_0))$ is a smooth scheme of dimension $6g-2$. It follows that $\mathscr{X}^{\tau}_{irr}(\pi_1(\S, x_0))= X^{\tau}_{irr}(\pi_1(\S,x_0))$ and that the set of closed points of $X^{\tau}_{irr}(\pi_1(\S, x_0))$ forms a smooth complex manifold of dimension $6g-2$. \eprop

\pf  We will show in Appendix I (see \Cref{mainequivalence} and \Cref{theorem: simplyconnected}) that the character variety $X^{\tau}_{irr}(\pi_1(\S,x_0))$ is smooth and of dimension $6g-2.$  According to \cite[p.\ 156]{weil}, the group $H^1_{par}(\pi_1(\S, x_0); \op{Ad} \rho)$ also has dimension $6g-2.$ Since $T_{[\rho]} \mathscr{X}^{\tau}_{irr}(\S) \simeq H^1_{par}(\pi_1(\S, x_0); \op{Ad} \rho)$ by \Cref{proposition:tangentspace}, this implies that $\mathscr{X}^{\tau}_{irr}(\pi_1(\S,x_0))$ is a smooth scheme. \epf

\prop \label{proposition:charvarietysmooth} For $\tau \neq \pm 2$, the character scheme $\mathscr{X}_{irr}^{\tau}(\pi_1(U_i,x_0))$ is smooth of dimension $3g-1.$ 
\eprop
\pf Note that $\pi_1({U}_i, x_0)=F_{g+1}$. If we fix a generating set $a_0, a_1,\dots, a_g$, then the relative representation scheme is isomorphic to $g$ copies of the scheme $\op{SL}_2$ and one copy of the scheme $\op{Spec} A^{\tau}$ for $A^{\tau}= k[x_1, x_2, x_3, x_4]/ (x_1x_4-x_2x_3-1, x_1+x_4-\tau)$.  One can check that $\op{Spec} A^{\tau}$ is a smooth scheme provided that $\tau \neq \pm 2$. It follows that $\mathscr{R}^{\tau}(\pi_1(U_i,x_0))$ is a smooth scheme.

\Cref{proposition:regularitycriterion} now implies that $\mathscr{X}_{irr}^{\tau}(\pi_1(U_i,x_0))$ is also a smooth scheme, which therefore has the same dimension as $X_{irr}^{\tau}(\pi_1(U_i,x_0))$. To compute this dimension, note that $\op{SL}(2, \C)$ has dimension $3$ while the conjugacy class of elements of trace $\tau$ has dimension $2$ when $\tau \neq \pm 2$.  Hence $\op{dim} R^{\tau}_{irr}(\pi_1({U}_i,x_0))= 3g+2$ and so $\op{dim} X^{\tau}_{irr}(\pi_1(U_i,x_0))= 3g+2-3=3g-1.$ \epf

The relative character varieties which arise from the Heegaard splitting $(\S, U_0, U_1)$ can be identified with certain moduli spaces of flat connections. Let us first describe this identification in the case of $X^{\tau}_{irr}(E_K)$. We let $G= \op{SL}(2,\C)$ and consider the trivial $G$-bundle $E_K \tms G$. A flat connection $A$ on $E_K \tms G$ gives rise to a holonomy representation $\pi_1(E_K, x_0) \to G$ by parallel transporting the fiber at $x_0$. One can check that the action of the gauge group corresponds precisely to conjugation in $G$. If $A$ has the property that its holonomy along $c_p$ has trace $\tau$, then the associated representation defines a point of $X^{\tau}(E_K)$. 

It can be shown that this map (or rather its inverse) defines a bijection between $X^{\tau}(\pi_1(E_K,x_0))$ and the moduli space of flat connections on $E_K \tms G$ whose holonomy has trace $\tau$ along $c_p$. This moduli space can be given the structure of a complex-analytic space, with respect to which the above bijection becomes an isomorphism of complex-analytic spaces. An analogous identification works if we replace $E_K$ with the $U_i$ or $\S$. In the latter case, we need to consider representations whose holonomy has trace $\tau$ along both $c_p$ and $c_q$. 

We will refer to the moduli space of flat connections described above as providing an \emph{analytic model} for $X^{\tau}(\pi_1(E_K,x_0))$. In contrast, we think of the relative character variety as defined in \Cref{subsection:relrepgeneralities} as the algebraic model. Observe that the analytic model is defined independently of any choice of basepoint, while the algebraic model uses the basepoint $x_0$. (The identification between the algebraic and analytic model, described above, commutes with the natural isomorphism of algebraic models induced by changing the basepoint.) We will therefore write $X^{\tau}(E_K)$ in place of $X^{\tau}(\pi_1(E_K, x_0))$ when we wish to consider the analytic model, and similarly for the $U_i$ and $\S$. 

The equivalence between analytic and algebraic models also induces an identification of tangent spaces. This is the content of \Cref{proposition:firstgeometric}, as we now explain. For notational simplicity, let $M$ be either $E_K, U_i, \S$. We define $H^k_{A_{\rho}}(M; \fk{g})$ to be the $k$-th de Rham cohomology group of $M$ with twisted coefficients in the (restriction of the) vector bundle $E_K \tms G \tms_{Ad} \fk{g}$ with respect to the irreducible connection $A_{\rho}$ on $E_K \tms G$. Let $\rho: \pi_1(M, x_0) \to G$ be the holonomy representation induced by $A_{\rho}$. We now consider the following diagram, where the horizontal arrows come from the Mayer-Vietoris sequence for cohomology with local coefficients and the vertical arrows are induced by the inclusion maps. 

\eq  \label{equation: diagram3}
\begin{tikzcd}
H^0_{A_{\rho}}({\S}; \fk{g}) \arrow[r] & H^1_{A_{\rho}}(E_K; \fk{g}) \arrow[d, "a"] \arrow[r] & H^1_{A_{\rho}}({U}_0; \fk{g}) \oplus H^1_{A_{\rho}}({U}_1; \fk{g}) \arrow{d}[left]{b_0}[right]{b_1} \arrow[r] & H^1_{A_{\rho}}({\S}; \fk{g}) \arrow[d, "c"]\\
& H^1_{A_{\rho}}(c_p; \fk{g}) & H^1_{A_{\rho}}(c_p; \fk{g}) \oplus  H^1_{A_{\rho}}(c_p; \fk{g}) & H^1_{A_{\rho}}(c_p; \fk{g}) \oplus H^1_{A_{\rho}}(c_q; \fk{g})
\end{tikzcd}
\eeq

We have the following proposition, which is proved in Section~\ref{sec:tgspaces}.

\prop \label{proposition:firstgeometric} The following groups are canonically isomorphic:  
\begin{itemize}
\item $\op{ker}(a) \simeq H^1_{par}( (\pi_1(E_K, x_0); \op{Conj}(c_p)); \op{Ad} \rho),$ 
\item $\op{ker}(b_0) \simeq H^1_{par}( (\pi_1({U}_0, x_0); \op{Conj}(c_p)); \op{Ad} \rho),$ 
\item $\op{ker}(b_1) \simeq H^1_{par}( (\pi_1({U}_1, x_0); \op{Conj}(c_p)); \op{Ad} \rho),$ 
\item $\op{ker}(c) \simeq H^1_{par}( (\pi_1({\S},x_0); \op{Conj}(c_p), \op{Conj}(c_q)); \op{Ad} \rho).$ 
\end{itemize}
\eprop

\Cref{proposition:firstgeometric} expresses the isomorphism of tangent space between analytic and algebraic models. Indeed, again letting $M$ be $E_K, U_i, \S$, the reader may verify that the left hand side is the tangent space at the irreducible flat connection $A_{\rho}$ in the analytic model of $X^{\tau}_{irr}(M)$; cf.\ the discussion following \cite[Prop.\ 2.11]{mondello}. The right hand side is the algebraic tangent space at the representation $\rho$, as we saw in \Cref{proposition:tangentspace}.  Observe that the right hand side of course depends on the basepoint (up to canonical isomorphism), while the left hand side does not. 

As a consequence of the above discussion, observe that $T_{\rho} X_{irr}^{\tau}({\S})$ can be viewed naturally as a subspace of $H_{A_{\rho}}^1({\S}; \fk{g})$. As shown for instance in \cite{biswas-guru}, the variety $X_{irr}^{\tau}({\S})$ carries a natural holomorphic symplectic form (i.e. a global section of the bundle $\Omega^{2,0}(X_{irr}^{\tau}(\S))$ of $(2,0)$-forms).  If $[\a], [\b] \in  T_{\rho} X_{irr}^{\tau}({\S}) \sub  H_{A_{\rho}}^1({\S}; \fk{g})$ for $\a_1, \a_2 \in \Omega^1({\S}; \fk{g})$, then 
\eq \o_{\C}(\a, \b)=   \int_{{\S}} \op{Tr}( \a_1 \wedge \a_2 ). \eeq

We have the following important fact.

\prop \label{proposition:lagrangian} For $i=1,2$, the natural embeddings $X^{\tau}_{irr}({U}_i) \to X^{\tau}_{irr}({\S})$ are Lagrangian with respect to $\o_{\C}$. \eprop

\pf It follows by combining \Cref{proposition: smoothcharacterscheme} and \Cref{proposition:charvarietysmooth} that $X_{irr}^{\tau}({U_i})$ is half-dimensional. The inclusion of tangent spaces $T_{\rho} X_{irr}^{\tau}({U}_i) \to T_{\rho} X_{irr}^{\tau}({\S})$ corresponds to the restriction map $\iota_i^*: \Omega_{A_{\rho}}^1({U}_i) \to \Omega_{A_{\rho}}^1({\S})$. 

Since $\a, \b$ represent classes in $H^1_{A_{\rho}}({U}_i; \fk{g})$, it follows that they are exact in a neighborhood of $\d U_i$. 

By Stokes' theorem (for manifolds with corners; see \cite[p.\ 415]{lee}), we have \begin{align*}
\o_{\C}(\a, \b)= \int_{{\S}}\op{Tr}(\a \wedge \b) &= \int_{\d U_i} d \op{Tr}(\a \wedge \b) = \int_{U_i} \op{Tr}(d\a \wedge \b) + \op{Tr}( a \wedge d \b) =0.
\end{align*}
This concludes the proof.
 \epf
 
Following \Cref{proposition:lagrangian}, it will be convenient to write $L_i^{\tau}= X_{irr}^{\tau}({U}_i) \sub X_{irr}^{\tau}({\S})$. 


\subsection{Identification of the tangent spaces} \label{sec:tgspaces} We now prove \Cref{proposition:firstgeometric}. We will treat only the first three isomorphisms since the last one appears already in \cite{biswas-guru}. However, our argument will easily generalize to also cover the last case. For the remainder of this section, we drop the conjugacy classes from our notation for the parabolic cohomology groups; cf. \Cref{remark:parabolicnotation}.

In the remainder of this section, let $M$ be either of the three manifolds $E_K, {U}_0$ or ${U}_1.$  We set $\G= \pi_1(M,x_0).$  The inclusion $i: c_p \hookrightarrow M$ induces a map $i_*: \pi_1(c_p,x_0) =\Z \to \G.$  This in turn induces maps $i^*:  Z^1(\G; \op{Ad} \rho) \to Z^1(\pi_1(c_p); \op{Ad} \rho) \to H^1(\pi_1(c_p); \op{Ad} \rho).$  

The following technical lemma is an important ingredient in the proof of \Cref{proposition:firstgeometric}. To fix some terminology, we will say that a 1-cocyle $\phi: \G \to \fk{g}$ restricts to a boundary on an element $g \in G$ if $\phi(g)= \mu - \op{Ad}_g \mu,$ for some $\mu \in \fk{g}.$

\prop \label{proposition: conjugation} Let $\phi \in Z^1(\G; \op{Ad} \rho)$ be a 1-cocycle.  Then $\phi$ restricts to a boundary on $g \in \G$ if and only if $\phi$ restricts to a boundary on all $g_i \in \op{Conj}(g).$ \eprop

\pf Suppose that $\phi(g)= \mu - \op{Ad}_g \mu,$ for some $\mu \in \fk{g}.$   Given $h \in G= \op{SL}(2, \C),$ we wish to show that there exists some $\tilde{\mu} \in \fk{g}$ such that $\phi(h g h^{-1}) = \tilde{\mu} - \op{Ad}_{hgh^{-1}} \tilde{\mu}.$

By \Cref{fact: trace}, it is enough to show that $\phi(hgh^{-1}) hgh^{-1}$ is trace-free.  Since $\phi \in Z^1(\G; \op{Ad} \rho),$ we have $\phi(hgh^{-1})= \phi(h) + \op{Ad}_{h} (\phi(g) + \op{Ad}_g \phi(h^{-1})) = \phi(h) + \op{Ad}_h \phi(g)+ \op{Ad}_{hg} \phi(h^{-1}).$    Hence $\phi(hgh^{-1} )hgh^{-1} = \phi(h) h gh^{-1}+ h \phi(g) gh^{-1} + hg \phi(h^{-1}) h^{-1}.$

Observe first of all that $\op{Tr} ( h \phi(g) g h^{-1}) = \op{Tr} (\phi(g) g)= \op{Tr} ( \mu g - g \mu)= 0.$  Hence it is enough to show that $ \op{Tr} \lt( \phi(h) h gh^{-1} + hg \phi(h^{-1}) h^{-1} \rt)=0.$

Observe now that we have $\phi( h)=  \phi( h e) = \phi(h) + \op{Ad}_h \phi(e) .$ Hence $\op{Ad}_h \phi(e)=0,$ which implies $\phi(e)=0.$   It follows that $\phi(h^{-1}) = - h^{-1} \phi(h) h.$  Hence $\op{Tr}( hg \phi (h^{-1}) h^{-1} )= - \op{Tr} ( hg h^{-1} \phi(h)) = - \op{Tr} ( \phi(h) h gh^{-1}).$  This completes the proof. \epf

\cor We have $\op{ker} i^* = Z^1_{par}(\G; \op{Ad} \rho).$  It follows that the kernel of the natural map $$H^1(\G; \op{Ad} \rho) \to H^1( \pi_1(c_p) ; \op{Ad} \rho)$$ is precisely the group $H^1_{par}(\G; \op{Ad} \rho).$ \ecor

\pf If $\psi \in \op{ker} i^*,$ then $\phi([c_p]) = \mu - \op{Ad}_{[c_p]} \mu$ for some $\mu \in \fk{g}.$  It now follows from \Cref{proposition: conjugation} that $\phi \in Z^1_{par}(\G; \op{Ad} \rho).$ The reverse direction is obvious.  \epf

Let $H^*_{sing}$ denote singular cohomology. The first three cases of \Cref{proposition:firstgeometric} can now be seen to follow from existence of the following commutative diagram:
\eq 
\begin{tikzcd}
H^1_{A_{\rho}}(M; \fk{g}) \arrow[d] \arrow[r, "\sim"] & H^1_{sing}(M; \op{Ad} \rho)  \arrow{d} \arrow[r, "\sim"] &  H^1(\pi_1(M,x_0); \op{Ad} \rho) \arrow[d]\\
H^1_{A_{\rho}}(c_p; \fk{g}) \arrow[r, "\sim"] & H^1_{sing}(c_p; \op{Ad} \rho) \arrow[r, "\sim"] & H^1(\Z; \op{Ad} \rho)
\end{tikzcd}
\eeq

The two leftmost isomorphisms come from the fact that the complexes of abelian groups $\G(\Omega^*(M; \fk{g}))$ and $C^*(M) \otimes_{\Z[\pi_1(M,x_0)]} \fk{g}$ both arise as the global sections of flasque resolutions of the same local system. The local system in question consists in the sections of $M \tms G \tms_{Ad} \fk{g}$ which are flat with respect to $A_{\rho}.$ See \cite[chapter 5]{warner} for details.

The two rightmost isomorphisms can be taken as a definition if $M$ is an Eilenberg-MacLane space. In general, the fibration $\tilde{M} \to M \to B(\pi_1(M,x_0))$ gives rise to a spectral sequence 
\eqs H^p(\pi_1(M,x_0); H^1(\tilde{M}; \fk{g})) = E_2^{p,q} \Rightarrow H^{p+q}(M; \op{Ad} \rho). \eeqs 

Consider the induced exact sequence in low-degrees $0 \to E_2^{1,0} \to H^1(M; \op{Ad} \rho) \to E_2^{0,1}.$ In our case, $E_2^{0,1}=0$ since $H^1(\tilde{M}; \fk{g})=0.$ This implies $H^1(\pi_1(M); \fk{g}) \to H^1(M; \op{Ad} \rho)$ is an isomorphism. 


\subsection{Definition of the invariant}  \label{sec:def} Like Heegaard Floer homology and the sheaf-theoretic $\op{SL}(2,\C)$-homology of \cite{abou-man}, our knot invariant is built from a Heegaard splitting. As in \cite{abou-man}, the class of objects which we associate to Heegaard splittings are perverse sheaves. It will be important to define precisely the category in which they live.

Let us therefore introduce the category $\bf{PSh}$. The objects of this category are triples
$$ (E_K, \g, \mc{F})$$
where $(E_K, \g)$ is an object of $\bf{SutKnot}$ and $\mc{F}$ is a perverse sheaf on $X^{\tau}_{irr}(E_K)$. For simplicity, when the underlying sutured manifold is understood from the context, we will  write $\mc{F}$ for the whole triple.

Let $(E_K, \g, \mc{F})$ and $(E_{K'}, \g', \mc{G})$ be two objects of $\bf{PSh}$. A morphism $f$ between them is defined to be a pair $(f^*, \phi)$ where $f^*$ is the pullback induced by a diffeomorphism $f: (E_K, \g) \to (E_{K'}, \g')$ and $\phi: f^* \mc{G} \to \mc{F}$ is a morphism of perverse sheaves on $X^{\tau}_{irr}(E_K)$. Morphisms compose according to the rule $(g^*, \psi) \circ (f^*, \phi)= (f^* g^*, \phi \circ f^*(\psi))$, and one can check that this rule satisfies the required axioms. 

Finally, we note that the category $\Perv(X^{\tau}_{irr}(E_K))$ of perverse sheaves on $X^{\tau}_{irr}(E_K)$ is a subcategory of $\bf{PSh}$. However, it is not a full subcategory, as it only includes morphisms for which $f$ is the identity diffeomorphism of $(E_K, \g)$. We denote by $\Perv'(X^{\tau}_{irr}(E_K))$ the full subcategory of $\bf{PSh}$ where the objects are $(E_K, \g, \mc{F})$ with fixed $(E_K, \g)$. The morphisms in this subcategory may involve non-trivial self-diffeomorphisms of $(E_K, \g)$.

For a doubly-pointed knot $(Y, K, p, q)$, we will consider Heegaard splittings of the blowup $(E_K, \g)$. To each such splitting $\mc{H}$, we will associate an object $P(\mc{H}) \in \bf{PSh}$. The construction goes as follows; cf.\ \cite[Sec.\ 7.1]{abou-man}. 

Recall from the previous section that $(X_{irr}^{\tau}({\S}), \o_{\C})$ is a complex symplectic manifold (i.e. $\o_{\C}$ is a holomorphic symplectic form). The submanifolds $L_j= X_{irr}^{\tau}(U_j)$ are complex Lagrangians (i.e. they are complex, and the restriction of $\o_{\C}$ to $L_j$ vanishes). Their intersection $L_0 \cap L_1$ can be identified as a complex analytic space with $X_{irr}^{\tau}(E_K)$. We will show in the next section (see \Cref{proposition: spinstructures}) that the $L_j$ carry unique spin structures.  It then follows from \cite[Thm.\ 2.1]{bussi} that we can associate to the above data a perverse sheaf $P^{\bullet}_{L_0, L_1}$ on $X_{irr}^{\tau}(E_K)$, which is unique up to canonical isomorphism in the category of perverse sheaves on $X_{irr}^{\tau}(E_K)$.  We write $P(\mc{H}) = P^{\bullet}_{L_0, L_1}$ and view $P(\mc{H})$ as an object of $\bf{PSh}$. 

We now wish to consider certain standard operations on Heegaard splittings. The three operations are \emph{diffeomorphism}, \emph{stabilization} and \emph{destabilization}, and we refer to these as Heegaard moves. We will now show that Heegaard moves induce isomorphisms of the associated objects in $\bf{PSh}$.

Given $(E_K,\g), (E_{K'}, \g') \in \bf{SutKnot}$, a diffeomorphism between two Heegaard splittings $\mc{H}= (\S, U_0, U_1)$ and $\mc{H}'= (\S', U_0', U_1')$ is a diffeomorphism $f: (E_K, \g) \to (E_{K'}, \g')$ such that $\S \to \S'$ and $U_i \to U_i'$. Let $\bb{Z}_{\S}$ and $\bb{Z}_{\S'}$ be the constant sheaf on $X^{\tau}_{irr}(\S)$ and $X^{\tau}_{irr}(\S')$ respectively, and note that there is a canonical isomorphism $\phi: f^*\bb{Z}_{X'} \to \bb{Z}_X$. The perverse sheaf $P^{\bullet}_{L_0, L_1}$ is built locally by applying the nearby cycle functor to $\bb{Z}_{X}$ and gluing the resulting perverse sheaves (suitably twisted by a bundle parametrizing spin strucutres). It can then be shown that $\phi$ induces an isomorphism $f^*P_{L_0', L_1'} \to P^{\bullet}_{L_0, L_1}$, which we also denote by $\phi$ by abuse of notation.

Thus, if $\mc{H}$ and $\mc{H}'$ are Heegaard splittings of $E_K$ which are related by a diffeomorphism $f: (E_K, \g) \to (E_K, \g)$, then there is an induced isomorphism $P(\mc{H}) \to P(\mc{H}')$ given by the pair $(f^*, \phi)$. 

Next, we consider the operation of stabilization. Since it is described in \cite[Sec.\ 7.2]{abou-man}, we will only give a brief review. Given a Heegaard splitting $\mc{H}= (\S, U_0, U_1)$, one drills out a solid torus $S$ from $U_1$, in such a way that the boundary of $S$ intersects $\S$ in a disk. This gives rise to a new Heegaard splitting $\mc{H}'= (\S', U_0', U_1')$ where $\S'= \S \cup \d S, U_0' = U_0 \cup S$ and $U_1'= U_1- S$. The splitting $\mc{H}'$ is said to be obtained from $\mc{H}$ by stabillization, and we write $\mc{H} \to \mc{H}'$.   

Following \cite[Prop.\ 7.3]{abou-man}, we claim that a stabilization $\mc{H} \to \mc{H}'$ induces an isomorphism $P^{\bullet}_{L_0, L_1} \to P^{\bullet}_{L_0', L_0'}$ of the associated perverse sheaves in $\bf{PSh}$. This is the content of the following proposition. 

\prop \label{proposition:stabilization} Suppose that $\mc{H}'$ is obtained from $\mc{H}$ by a stabilization and that $L_0, L_1$ and $L_0', L_1'$ are the complex Lagrangians arising from $\mc{H}, \mc{H}'$ respectively. Then there is an isomorphism of perverse sheaves $P^{\bullet}_{L_0', L_1'} \to P^{\bullet}_{L_0, L_1}$. \eprop

\pf One can essentially repeat the proof of Proposition 7.3 in \cite{abou-man}, replacing $Y$ with $E_K$ and replacing all character varieties with their relative counterparts. Two ingredients of the original argument in \cite{abou-man} need some care. First of all, the original argument uses the fact $X_{irr}(U_i)$ admits a unique spin structure.  We will establish in \Cref{proposition: spinstructures} that this remains true for $X_{irr}^{\tau}(U_i)$.  Secondly, the argument of \cite{abou-man} uses the fact $X_{irr}(\S)$ is connected and simply-connected.  This also remains true for $X_{irr}^{\tau}(\S)$ and will be established in Appendix I. \epf  

If $\mc{H}'$ is obtained from $\mc{H}$ by a stabilization, then we say that $\mc{H}$ is obtained from $\mc{H}'$ by a destabilization. We may thus associate to each destabilization an isomorphism of associated perverse sheaves, which is just the inverse of the isomorphism induced by the opposite stabilization. 

In order to extract a $3$-manifold invariant from our construction, we need the following version of the Reidemeister-Singer theorem. 

\prop \label{proposition:reid-sing} Suppose that $\mc{H}$ and $\mc{H}'$ are Heegaard splittings for $E_K$ of genus at least 6. Then they can be related by a sequence of stabilizations and destabilizations. We may assume that the genus of the Heegaard splitting never drops below six in this sequence. \eprop 

\pf First of all, note by \cite[Remark 7.2]{abou-man} that one can replace an isotopy by a sequence of stabilizations and destabilizations. Since each stabilization is followed by a destabilization, this does not cause the genus to drop. 

We now appeal to \cite[Prop.\ 2.15]{juhasz}. Up to isotopy, the Heegaard splittings $\mc{H}_0$ and $\mc{H}_1$ are induced by self-indexing Morse functions $f_0$ and $f_1$. The basic idea is to argue that there is 1-parameter family $f_t$ which is Morse for all $t$ aside from $0< t_1<\dots<t_n <1$ where the family has a birth-death critical point.  Translating from the language of functions to the language of handlebody decompositions, this means precisely that the Heegaard splittings can related by stabilizations, destabilizations and isotopies.  To ensure that the genus never drops below 6, one can appeal to standard arguments of Cerf theory \cite[Lem.\ 2.5(1)]{laudenbach} to ensure that the birth times happen before the death times.  \epf

As a corollary to Propositions \ref{proposition:stabilization} and \ref{proposition:reid-sing}, we find that the isomorphism class of $P(\mc{H})$ depends only on $(E_K, \g)$. Since every object $(E_K, \g) \in \bf{SutKnot}$ is the blowup of  a doubly-pointed knot $(Y, K, p, q) \in \bf{Knot}_{**}$, we conclude that the isomorphism class of $P(\mc{H})$ depends only on the underlying doubly-pointed knot. This will be strengthened in the next section. 

\subsection{Naturality} If $\mc{H}$ and $\mc{H}'$ are Heegaard splittings for $(E_K, \g) \in \bf{SutKnot}$ which are related a sequence of stabilizations and destabilizations, the induced morphism $P(\mc{H}) \to P(\mc{H}')$ which we introduced in the previous section could in principle depend on the choice of sequence. We will now argue that this morphism is in fact independent of the choice of sequence. We refer to this property as \emph{naturality}. 

The naturality of the original sheaf-theoretic invariant of Abouzaid and the second author was established in \cite[Sec.\ 7.3]{abou-man}. Their arguments apply in our setting with no essential modifications, so we give only a brief summary of the main steps. 

The naturality of $3$-manifold invariants built from Heegaard-type decompositions was studied in a general context by J\'{u}hasz, Thurston and Zemke \cite{juhasz-thurston}. Applied to our setting, their work implies that our invariant is indeed natural provided that the morphisms associated to the three Heegaard moves satisfy a list of axioms. (The most interesting of these axioms is invariance under a move called handleswap.) Thus, checking naturality of the invariant boils down to checking that these axioms are satisfied. This was done in \cite[Sec.\ 7.3]{abou-man} for the original sheaf-theoretic invariant on which our construction is based. In our setting, the axioms and  their verification are essentially the same; cf.\ \cite[Thm.\ 7.5 and 7.8]{abou-man}. 

\rmk The results of J\'{u}hasz, Thurston and Zemke are phrased in the language of sutured manifolds. This explains why sutured manifolds have also appeared in our construction. In contrast, there was no need for sutured manifolds in the original work of \cite{abou-man} since the manifolds under consideration were closed. \ermk

As a consequence of naturality, it now makes sense to define $$P^{\bullet}_{\tau}(K):= P(\mc{H})= P^{\bullet}_{L_0,L_1},$$ where $\mc{H}$ is any Heegaard splitting of $(E_K, \g)$ and $L_0, L_1$ are the Lagrangians associated to $\mc{H}$. Observe that $P^{\bullet}_{\tau}(K)$ is well-defined in the usual category-theoretic sense: if we had chosen a different Heegaard splitting, we could relate $P(\mc{H})$ and $P(\mc{H})$ by a unique isomorphism in $\bf{PSh}$.  

A morphism $(E_K, \g) \to (E_{K'}, \g')$ in $\bf{SutKnot}$ induces a diffeomorphism of Heegaard splittings $\mc{H}= (\S, U_0, U_1) \to \mc{H}'= (\S', U_0', U_1')$.  One can check that the induced map $P(\mc{H}) \to P(\mc{H}')$ commutes with stabilization -- in the case where $(E_K, \g) = (E_{K'}, \g')$, this is one of the axioms of \cite{juhasz-thurston}; see \cite[Thm.\ 7.5, 2(ii)]{abou-man}. Hence, we get an isomorphism $P^{\bullet}_{\tau}(K) \to P^{\bullet}_{\tau}(K')$ in $\bf{PSh}$. 

In summary, we have assigned to every object $(E_K, \g) \in \bf{SutKnot}$ a perverse sheaf $$P^{\bullet}_{\tau}(K) \in \Perv'(X^{\tau}_{irr}(E_K)) \subset \bf{PSh}$$ and to each morphism $(E_K, \g) \to (E_{K'}, \g')$ a morphism $P^{\bullet}_{\tau}(K) \to P^{\bullet}_{\tau}(K')$ in $\bf{PSh}$. One can check that this assignment satisfies the axioms of a functor. We can then obtain an invariant of doubly-pointed knots simply by precomposing this functor $\bf{SutKnot} \to \bf{PSh}$ with the blowup functor $\bf{Knot}_{**} \to \bf{SutKnot}$. The resulting composition associates to a doubly-pointed knot $(Y,K, p, q)$ the perverse sheaf $P^{\bullet}_{\tau}(K)$. Of course, this depends on the full data $(Y, K, p, q)$, as well as on the choice of $\tau \in (-2,2)$. 

One can also get a functor $\bf{Knot}_{**} \to \bf{Gp}$ by passing to hypercohomology, i.e. by post-composing the above functor $\bf{Knot}_{**} \to \bf{PSh}$ with the hypercohomology functor $\bf{PSh} \to \bf{Gp}$. As explained in \cite[p.\ 2]{abou-man}, if we fix a Heegaard splitting $\mc{H}=(\S, U_0, U_1)$ so that $P^{\bullet}_{\tau}(K)= P(\mc{H})= P^{\bullet}_{L_0, L_1}$, then $\HP^*_{\tau}(K)$ should correspond to the ordinary Lagrangian Floer cohomology of $L_0, L_1$ -- provided that the Floer cohomology can be defined. We therefore call 
$$\HP^*_{\tau}(K) := \bb{H}^*(P^{\bullet}_{\tau}(K))$$ the \emph{$\tau$-weighted sheaf-theoretic $\op{SL}(2, \C)$-Floer cohomology} of the doubly-pointed knot $(Y, K, p,q)$. 

This completes the proof of Theorem~\ref{thm:main}.


Observe that any pair of doubly-pointed knots $(Y, K, p, q)$ and $(Y, K, p', q')$ which share the same underlying knot are of course isomorphic objects in $\bf{Knot}_{**}$. It follows that the associated $\tau$-weighted sheaf-theoretic $\op{SL}(2, \C)$-Floer cohomology groups are isomorphic. It therefore makes sense to think of these groups as knot invariants, provided that one only considers them up to isomorphism (not canonical isomorphism).

Let us mention a couple of basic properties of the invariants we just defined. 
\prop
The perverse sheaf $P^{\bullet}_{\tau}(K)$ is Verdier self-dual. 
\eprop

\prop
If $-Y$ denotes $Y$ with the opposite orientation, then the perverse sheaves $P^{\bullet}_{\tau}$ associated to $K \subset Y$ and $K \subset -Y$ are isomorphic. In particular, for $Y=S^3$, if we let $m(K)$ denote the mirror of $K \subset S^3$, then $P^{\bullet}_{\tau}(K) \cong P^{\bullet}_{\tau}(m(K))$.
\eprop

The proofs of these propositions are entirely similar to those of the corresponding results in the closed case; cf. Propositions 8.1 and 8.2 in \cite{abou-man}.

\subsection{Spin structures} \label{subsection:spinstructures}We will now establish \Cref{proposition: spinstructures}, which was postponed from the previous section. This proposition explains why we need to assume that our Heegaard splitting has genus at least 6.  The arguments of this section will not be used subsequently, so the reader may freely pass to the next section. 

\prop \label{proposition: spinstructures} Suppose that $\S$ has genus $g \geq 6.$ Then $X_{irr}^{\tau}({U}_i)$ admits a unique spin structure. \eprop 

We begin with some preparatory remarks before proving \Cref{proposition: spinstructures}. 

Let $\tU_i$ be the handlebody of genus $g$ obatined from $U_i$ by blowing down around the knot $K$, i.e. filling in a cylinder with boundaries $\operatorname{bl}(p)$ and $\operatorname{bl}(q)$.

Let $\fk{C}^{\tau} \sub \op{SL}(2, \C)$ be the conjugacy class of elements having trace $\tau.$  By \cite[Sec.\ 2.1(d)]{abou-man}, $\fk{C}^{\tau}$ is diffeomorphic to $TS^2.$ Let $R_{irr}(\tU_i) \sub R(\tU_i)$ denote the open subvariety of irreducible representations of $\pi_1(\tU_i, x_0)$. Let $X_{irr}(\tU_i) \sub X_{irr}(\tU_i)$ be defined analogously. 

Observe that $\pi_1(\tU_i, x_0)=F_g,$ where $F_g$ is the free group on $g$ generators. By \cite[Lem.\ 2.6]{abou-man}, we have that $\pi_1(X_{irr}(\tU_i))= 0$ while $\pi_2(X_{irr}(\tU_i))=\Z/2.$  It follows that $\pi_1(X_{irr}(\tU_i) \tms \fk{C}^{\tau})=0$ and $\pi_2(X_{irr}(\tU_i) \tms \fk{C}^{\tau})= \pi_2(X_{irr}(\tU_i)) \oplus \pi_2(\fk{C}^{\tau})= \Z/2 \oplus \Z.$  

Let us now consider the space $(R_{irr}(\tU_i) \tms \fk{C}^{\tau} ) /G^{ad},$ where $G^{ad} = \op{SL}(2,\C)/ Z(\op{SL}(2, \C))= \op{PSL}(2, \C).$  

\lem 
\label{lem:Qspin}
The space  $Q:= (R_{irr}(\tU_i) \tms \fk{C}^{\tau} ) /G^{ad}$ admits a spin structure. \elem

\pf Since $G^{ad}$ acts freely on $R_{irr}(\tU_i),$ we see that $Q$ is a $\fk{C}^{\tau}$-bundle over
\eq X_{irr}(\tU_i)=R_{irr}(\tU_i)/ G^{ad}. \eeq

Since $\pi_1(X_{irr}(\tU_i))=0$ and $\pi_2(X_{irr}(\tU_i))= \Z/2$, it follows by the Hurewicz and universal coefficients theorem that $H^1(X_{irr}(\tU_i), \Z)=H^2(X_{irr}(\tU_i), \Z)=0$. Let $i: \fk{C}^{\tau} \hookrightarrow Q$ be the inclusion of a fiber.  The Leray-Serre spectral sequence implies that there is an isomorphism 
\eq \label{equation:fiberinclusion}  i^*: H^2(Q; \Z) \xrightarrow{\cong} H^2(\fk{C}^{\tau}; \Z). \eeq
By naturality of Chern classes, we also have $$i^* c_1(Q)= c_1( \fk{C}^{\tau}).$$

We now argue that $w_2(\fk{C}^{\tau})=0$. Indeed, recall that $\fk{C}^{\tau} \cong TS^2$. We have $T(TS^2)= \pi^*(TS^2) \oplus \pi^*(TS^2)$. Since $S^2$ admits a spin structure and $H^1(S^2, \Z/2)=0,$ it follows from the Whitney product formula that $0= w_1(TS^2)=w_2(TS^2)$.

Next, we consider the short exact sequence of coefficients $$0 \to \Z \to \Z \to \Z/2 \to 0,$$ which induces a long exact sequence in cohomology
\eq \dots\to H^2(\fk{C}^{\tau}; \Z) \to H^2(\fk{C}^{\tau}; \Z) \to H^2(\fk{C}^{\tau}; \Z) \to H^3(\fk{C}^{\tau}; \Z)=0.\eeq
Since $w_2(\fk{C}^{\tau})$ is the mod $2$ reduction of $c_1(\fk{C}^{\tau})$, it follows that $c_1(\fk{C}^{\tau})$ is divisible by $2$ in $H^2(\fk{C}^{\tau}; \Z)$. Hence $c_1(Q)$ is also divisible by $2$ in $H^2(Q; \Z)$ by \eqref{equation:fiberinclusion}. Hence $w_2(Q)=0$. 
Since $Q$ is a complex manifold, it is orientable. It follows from the vanishing of $w_2(Q)$ that $Q$ admits a spin structure. (Indeed, an orientable manifold admits a spin structure if and only if the classifying map to $\op{BSO}(n)$ can be lifted to $\op{BSpin}(n)$. The second Stiefel-Whitney class is precisely the obstruction to such a lift; see \cite[Thm.\ 4.38]{cohen}.)
 \epf

\pf[Proof of \Cref{proposition: spinstructures}] Observe that there is a natural inclusion $R_{irr}(\tU_i) \tms \fk{C}^{\tau} \to R^{\tau}_{irr}(U_i)$. Taking quotients by the $G^{ad}$ action gives the inclusion $Q=(R_{irr}(\tU_i) \tms \fk{C}^{\tau} ) /G^{ad} \hookrightarrow X_{irr}^{\tau}({U}_i).$  The residual set $X_{irr}^{\tau}({U}_i)  - Q$ consists of irreducible representations and has dimension at most $ \op{dim} R_{red}(\tU_i)+ \op{dim} \fk{C}^{\tau}$, where $R_{red}(\tU_i):= R(\tU_i)- R_{irr}(\tU_i).$ But $R_{red}(\tU_i) $ has complex dimension $2g+1$ \cite[proof of Lem.\ 2.6]{abou-man}.  Hence $X_{irr}^{\tau}({U}_i)  - Q$ has complex dimension at most $ 2g+3.$ 

But $X_{irr}^{\tau}({U}_i)$ has complex dimension $3g-1.$ Hence we find that $X_{irr}^{\tau}({U}_i)  - Q$ has complex codimension $3g-1 - (2g+3)= g -4.$ 

Recall that the second Stiefel-Whitney class of an $n$-dimensional manifold is the obstruction to finding $n-1$ linearly independent sections of the tangent bundle on the 2-skeleton; see \cite[p.\ 140]{milnorstasheff}. This obstruction vanishes if it vanishes in the complement of a subset of complex codimension $\geq 2.$ We conclude that $w_2(X_{irr}^{\tau}(U_i))=0$ provided that $g \geq 6$.  It follows that $X_{irr}^{\tau}(U_i)=L_i^{\tau}$ admits a spin structure. 

It remains to verify that this spin structure is unique. To this end, recall from the proof of Lemma~\ref{lem:Qspin} that $Q$ is a $TS^2$-bundle over a simply connected space, hence it is simply connected itself. Assuming still that $g\geq 6,$ it follows from an analogous dimension count that $X_{irr}^{\tau}(U_i)$ is also simply connected. Because spin structures form an affine space over the first cohomology with $\Z/2$ coefficients, we conclude that the spin structure of $X_{irr}^{\tau}(U_i)=L_i^{\tau}$ is unique.\epf

\section{Computational tools} 
\label{sec:tools}
This section is intended to collect some auxiliary results which will be useful in computing $P^{\bullet}_{\tau}(K)$ for various examples later on. Fix a doubly-pointed knot $(Y,K, p,q)$ and a parameter $\tau \in (-2,2)$. Let $E_K$ be the blowup and fix a Heegaard splitting $\mc{H}= (\S, U_0, U_1)$ for $E_K$. 

We will consider two situations. First, we will show that $P^{\bullet}_{\tau}(K)= P^{\bullet}_{L_0, L_1}$ is a local system if $\mathscr{X}^{\tau}_{irr}(E_K)$ is a smooth scheme. This fact relies crucially on our description of the tangent space to relative representation schemes in \Cref{proposition:tangentspace}.  Next, we will describe $P^{\bullet}_{\tau}(K)$ in the case where $\mathscr{X}^{\tau}_{irr}(E_K)= \op{Spec} R,$ for $R = \frac{\C[\e_1]}{\e^{n_1}} \tms\dots\tms \frac{\C[\e_k]}{\e_k^{n_k}}$.  Such a scheme of course fails to be smooth unless all $n_i=1$. 

\subsection{The smooth case} The following proposition is an analog of \cite[Lem.\ 3.4]{abou-man} for parabolic group cohomology. 

\prop \label{proposition:mayer-vietoris} Suppose that $\rho: \pi_1(E_K, x_0) \to \op{SL}(2,\C)$ is an irreducible representation.  Then there is an exact sequence
\begin{align*} 0 \to H^1_{par}( \pi_1(E_K,x_0); \op{Ad} \rho) \to& H^1_{par}( \pi_1({U}_0, x_0); \op{Ad} \rho) \oplus H^1_{par}( \pi_1({U}_1,x_0); \op{Ad} \rho)\\
 &\to  H^1_{par}( \pi_1({\S},x_0); \op{Ad} \rho). 
\end{align*}
\eprop

Note that we have dropped the conjugacy classes from our notation for the parabolic cohomology groups; cf. \Cref{remark:parabolicnotation}.

\pf  We refer the reader to the diagram \eqref{equation: diagram3}. Since $\rho$ is irreducible, it follows that $H^0_{A_{\rho}}({\S}; \fk{g})=0$. It can then be verified by a straightforward diagram chase that there is an exact sequence
\eq 0 \to \op{ker} a \to \op{ker} b_0 \oplus \op{ker} b_1 \to \op{ker} c. \eeq

The claim now follows from \Cref{proposition:firstgeometric}. \epf

\lem \label{lemma:cleanintersection} The Lagrangian submanifolds $L_0 \sub X^{\tau}_{irr}({\S})$ and $L_1 \sub X^{\tau}_{irr}({\S})$ intersect cleanly at $\rho$ if and only if $\rho$ is a regular point of $\mathscr{X}^{\tau}(E_K).$ \elem

\pf By combining \Cref{proposition:mayer-vietoris} with \Cref{proposition:tangentspace}, we can apply exactly the same argument as in the proof of Lemma 3.4 in \cite{abou-man}.  \epf

The following proposition is our main tool for computing $P_{L_0,L_1}^{\bullet}$. 

\prop  \label{proposition:localsystem} If $\mathscr{X}_{irr}^{\tau}(E_K)$ is a smooth scheme, then $P_{L_0,L_1}^{\bullet}$ is a local system on ${X}_{irr}^{\tau}(E_K)$. The stalks of $P_{L_0,L_1}^{\bullet}$ on each component of ${X}_{irr}^{\tau}(E_K)$ of complex dimension $k$ are isomorphic to $\bb{Z}$, supported in degree $-k$. \eprop

\pf If $L_0$ and $L_1$ intersect cleanly, then it was shown in \cite[Prop 6.2]{abou-man} that  $P_{L_0,L_1}^{\bullet}$ is a local system with stalks isomorphic to $\bb{Z}$ in degree $-k$ on $k$-dimensional components. But it follows from \Cref{lemma:cleanintersection} that $L_0$ and $L_1$ intersect cleanly if and only if $\mathscr{X}_{irr}^{\tau}(E_K)$ is smooth as a scheme. \epf

\subsection{A non-reduced setting} 

Another situation where we are able to compute  $P_{L_0,L_1}^{\bullet}$ is the following. 

\prop \label{proposition:nonreducedcomputation} Suppose that $\mathscr{X}_{irr}^{\tau}(E_K)$ consists of $k$ points of multiplicities $n_1, \dots, n_k$; i.e., it equals $\op{Spec} R$, where 
$$R = \frac{\C[\e_1]}{\e^{n_1}} \tms\dots\tms \frac{\C[\e_k]}{\e_k^{n_k}}.$$ Then  $P_{L_0,L_1}^{\bullet}$ is a sheaf concentrated in degree $0$, with stalk $\Z^{n_i}$ over the $i^{\text{th}}$ point.  \eprop

\Cref{proposition:nonreducedcomputation} is a consequence of the following general lemma, which appeals to theory of d-critical loci introduced by Joyce in \cite{joyce}. A (complex-analytic) d-critical locus is a complex-analytic space $X$ along with a section $s$ of a certain sheaf $\mc{S}_X^0$ which locally parametrizes different ways of writing $X$ as the critical locus of a holomorphic function; see \cite[Def.\ 2.5]{joyce}. 

\begin{lemma}  \label{lemma:nonreducedcomputation} Consider two complex spin Lagrangians $L_0, L_1$ in a complex symplectic manifold $M$. Let $X=\{x\} \in L_0 \cap L_1$ be an isolated intersection point, such that, as a complex analytic space, $X$ is nonreduced of order $n \geq 2$. Then the stalk of the perverse sheaf $P^{\bullet}_{L_0, L_1}$ over $x$ is $\Z^{n}$ in degree zero.
\end{lemma}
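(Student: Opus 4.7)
The plan is to invoke the local model underlying Bussi's construction and reduce the computation to a classical statement about isolated hypersurface singularities. By the complex Darboux/Weinstein theorem, a neighborhood of $x$ in $M$ can be identified symplectically with a neighborhood of the zero section in $T^*L_0$, so that $L_0$ is the zero section and $L_1$ is the graph of $df$ for a holomorphic function $f\colon (L_0,x) \to (\C,0)$ satisfying $df(x)=0$. Under this identification, $L_0 \cap L_1$ is cut out by the vanishing of $df$, i.e.\ it is scheme-theoretically the critical locus $\operatorname{Crit}(f)$. Bussi's theorem then identifies $P^{\bullet}_{L_0,L_1}$ in a neighborhood of $x$ with the perverse sheaf of vanishing cycles $\phi_f^p(\bb{Z}_{L_0}[\dim L_0])$, up to a twist by a $\Z/2$-local system encoding orientation data; the hypothesis that $L_0$ and $L_1$ are spin canonically trivializes this twist (this is exactly the input one needs from the spin structures, as in \cite{bussi} and \cite{joyce}).

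Next I would identify the Milnor number of $f$ at $x$ with $n$. Because $x$ is an isolated point of $L_0 \cap L_1$, it is an isolated critical point of $f$, so the local ring of $\operatorname{Crit}(f)$ at $x$ is the Jacobian algebra
\[
\O_{L_0,x}/(\partial_1 f, \dots, \partial_N f), \qquad N = \dim L_0.
\]
By the Darboux identification this is canonically isomorphic to $\O_{L_0 \cap L_1, x}$, whose $\C$-dimension is $n$ by hypothesis. Thus the Milnor number $\mu_f(x)$ equals $n$.

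Finally, by Milnor's theorem the Milnor fibre $F_x$ of an isolated critical point is homotopy equivalent to a wedge of $\mu_f(x)$ spheres of dimension $N-1$, so $\widetilde H^{N-1}(F_x;\Z) \cong \Z^{n}$ and all other reduced cohomology vanishes. The stalk of the perverse vanishing cycles sheaf $\phi_f^p(\bb{Z}_{L_0}[N])$ at $x$ is, by definition, this reduced cohomology placed in perverse degree $0$. Combining this with the trivialization of the orientation twist by the spin structures, we conclude that the stalk of $P^{\bullet}_{L_0,L_1}$ at $x$ is $\Z^n$ concentrated in degree $0$. This is consistent with \Cref{proposition:localsystem}: when $n=1$, the intersection is reduced, $x$ is a non-degenerate (Morse) critical point of $f$, and we recover a stalk of $\Z$ in degree $0$ on a $0$-dimensional component.

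The principal point that requires care is ensuring that the complex-analytic structure on $L_0 \cap L_1$ coming from the symplectic/Darboux description matches the intrinsic scheme structure used in the statement, so that the order of non-reducedness $n$ really does equal the $\C$-dimension of the Jacobian ring; this is essentially the content of Joyce's d-critical formalism applied to Lagrangian intersections, and once it is invoked the rest of the argument is standard singularity theory.
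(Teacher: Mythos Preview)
Your argument is correct and in fact slightly more streamlined than the paper's. Both proofs begin the same way: pass to a local Darboux chart so that $L_1$ is the graph of $df$ and $P^{\bullet}_{L_0,L_1}$ is identified (after trivializing the orientation data via the spin structures) with the perverse vanishing cycles of $f$, whose stalk at an isolated critical point is governed by the Milnor number. Where the two diverge is in computing that Milnor number. You observe directly that the Jacobian algebra of $f$ at $x$ is, by construction, the local ring $\O_{L_0\cap L_1,x}$, whose length is $n$ by hypothesis; hence $\mu = n$ and the stalk is $\Z^n$. The paper instead passes through Joyce's d-critical formalism: it identifies the section $s \in \mc{S}^0_X$ explicitly (as an element of $(z^{n+1})/(z^{2n})$) and then invokes Joyce's normal-form result to write $f(z_1,\dots,z_m)=s(z_1)+z_2^2+\cdots+z_m^2$ in suitable coordinates, from which the Milnor number is read off. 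Your route avoids this normal form entirely, at the cost of appealing to the general equality ``Milnor number = length of the Jacobian ring'' rather than computing it by hand. The paper's route makes the d-critical structure on $X$ completely explicit, which is conceptually pleasant given that this structure underlies Bussi's construction, but is not strictly needed for the stalk computation. Your closing caveat about matching the intrinsic analytic structure on $L_0\cap L_1$ with the critical-locus structure is exactly the point where care is required, and is handled by the same Darboux/Bussi input you already cite.
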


\begin{proof}
Let $\O= \O_{\C}$ be the space of holomorphic functions in one variable $z$. By hypothesis, the analytic space $X$ under consideration is isomorphic to $\O/(z^n).$ 

We know that locally, the Lagrangian $L_1$ is the graph of $df$, for some holomorphic function $f: U \to \C$, where $U$ is a neighborhood of $x$ in $L_0$. This turns $X$ into a $d$-critical locus in the sense of \cite[Def.\ 2.5]{joyce}. In our case, the sheaf $\mc{S}^0_X$ is computed in \cite[Example 2.16]{joyce} in the algebraic setting, but the same calculation applies in the complex analytic setting: the sections $s$ of $\mc{S}^0_X$ are elements of the module $(z^{n+1})/(z^{2n})$ over $\O/(z^n)$:
$$ s = a_{n+1} z^{n+1} + a_{n+2}z^{n+2} + \dots + a_{2n-1} z^{2n-1} + (z^{2n}).$$

Such a section $s$ specifies the structure of $X$ as a $d$-critical locus (provided that $a_{n+1} \neq 0$). Once this structure is determined, by Proposition 2.22 in \cite{joyce}, we find that, if $m$ denotes the complex dimension of $L_0$, then there are local holomorphic coordinates $z_1, \dots, z_m$ near $x \in L_0$ in which we can write
$$ f(z_1, \dots, z_m)= s(z_1) + z_2^2 + \dots + z_m^2.$$

The perverse sheaf $P^{\bullet}_{L_0, L_1}|_X$ is $\Z^{\mu-1}$ in degree $0$, where $\mu$ is the Milnor number of $f$. Regardless of the values of $a_{n+1} \neq 0, a_{n+2}, \dots, a_{2n-1}$, the Milnor number is $n+1$, and the claim follows.
\end{proof}

\section{Relation to the $\widehat{A}$-polynomial}  \label{section:constantdim1} In this section, we will relate $P_{\tau}^{\bullet}(K)$ to a knot invariant known as the $\widehat{A}$-polynomial, which is a close cousin of the better-known $A$-polynomial. For knots in integral homology $3$-spheres whose $\op{SL}(2,\C)$-character variety is $1$-dimensional, we will show under certain technical assumptions that $\HP^*_{\tau}(K)$ simply recovers the $\l$-degree of the $\widehat{A}$-polynomial. This relationship breaks down for general knots, as we will see in \Cref{section:connectedsums} when considering certain connected sums of knots. The $\widehat{A}$-polynomial is computable in many situations. In \Cref{section:1dcomputations}, we will leverage some of these computations to determine $\HP^*_{\tau}(K)$ for various classes of prime knots. 

\subsection{Preparations} \label{subsection:preparationsmall} Let us consider a doubly-pointed knot $(Y, K, p, q) \in \bf{Knot}_{**}$ and its blowup $(E_K, \g) \in \bf{SutKnot}$. From now on, we will always assume that the manifold $Y$ is an integral homology $3$-sphere. This assumption will allow us to talk about the Alexander polynomial (in Assumption~\ref{assumption:alexander} below) and the $A$- and $\widehat{A}$-polynomials (in Section~\ref{sec:Apol}).

We fix a basepoint $x_0 \in c_p$ and let \eq \label{equation:generatingset} \G= \pi_1(E_K, x_0)=  \langle g_1, \dots, g_n \mid r_1, \dots, r_l \rangle \eeq 

We fix $\tau \in (-2,2)$ and let $\fk{c}= \op{Conj}([c_p])$. We will also assume that the presentation of $\G$ which we have chosen has the property that $\op{Conj}(g_1)= \fk{c}$. 

We will rely throughout this section on the definitions and notation introduced in \Cref{subsection:relrepgeneralities}. In particular, we remind the reader that $\mc{A}(\G)$ is the coordinate ring of the representation scheme $\mathscr{R}(\G)$. Similarly, $\mc{A}^{\tau}(\G)$ is the coordinate ring of the relative representation scheme $\mathscr{R}^{\tau}(\G)= \mathscr{R}^{\tau}(\G; \fk{c})$. These coordinate rings are well-defined up to unique isomorphism. 

As described in \Cref{example: relrepexample}, one can construct a model for $\mc{A}(\G)$ by associating to each generator $g_i \in \G$ the formal variables $x^{g_i}_{11}, x^{g_i}_{12}, x^{g_i}_{21}, x^{g_i}_{22}$ and modding out the free $k$-algebra $k[x^{g_1}_{11}, x^{g_1}_{12}, \dots, x^{g_n}_{22}]$ by the appropriate relations. One can then construct $\mc{A}^{\tau}(\G)$  by setting 
\eq \label{equation: quotientrecall} \mc{A}^{\tau}(\G)= \mc{A}(\G)/ (x^{g_1}_{11}+ x^{g_1}_{22}- \tau). \eeq

It will be convenient to write $\mc{A}_0(\G):=\mc{A}(\G) / \op{nil}(\mc{A}(\G))$, where $\op{nil}(\mc{A}(\G))$ is the nilradical of $\mc{A}(\G)$. Since $\op{SL}_2$ is linearly reductive, we have 
\eq \label{equation:a0} \mc{A}(\G)^{\op{SL}_2} / \op{nil}(\mc{A}(\G)^{\op{SL}_2}) = \mc{A}(\G)^{\op{SL}_2} / \op{nil}(\mc{A}(\G))^{\op{SL}_2} =  (\mc{A}(\G) / \op{nil}(\mc{A}(\G)) )^{\op{SL}_2} = \mc{A}_0(\G)^{\op{SL}_2}. \eeq

Hence the character variety $X(\G)$ is just the spectrum of $\mc{A}_0(\G)^{\op{SL}_2}$. For $g \in \G$, one can define an element $\tau_g \in \mc{A}_0(\G)^{\op{SL}_2}$ called the \emph{character} of $g$. For a closed point $[\rho] \in X(\G)$, we have $\tau_g(\rho)=\op{Tr}(\rho(g))$. Using the well-known identity $$\tau_g \tau_h= \tau_{gh}+\tau_{g h^{-1}},$$ it can be shown that the $N=2^n-1$ functions $$\tau_{g_{i_1}\dots g_{i_k}}, \; 1\leq k \leq n, \; 1\leq i_1<\dots<i_k \leq n $$ in fact generate $\mc{A}_0(\G)^{\op{SL}_2}$. 

In particular, there is a surjective ring map
\eq \label{equation:charembedding} \phi: k[x_1, \dots, x_N] \to \mc{A}_0(\G)^{\op{SL}_2} \eeq where we can assume that $x_1 \mapsto \tau_{g_1}= x^{g_1}_{11}+ x^{g_1}_{22}.$  Letting $\mc{I}= \op{ker} \phi$, we can use \eqref{equation:charembedding} to identify  \eq k[x_1, \dots, x_N] / \mc{I} =  \mc{A}_0(\G)^{\op{SL}_2}. \eeq It will be convenient later on to write \eq \label{equation:abar} \ov{A}:= k[x_1, \dots, x_N] / \mc{I} =  \mc{A}_0(\G)^{\op{SL}_2}. \eeq 

Finally, we let $$H_{\tau} \hookrightarrow \bb{A}^N$$ be the closed embedding corresponding to the ring map $k[x_1, \dots, x_N] \to k[x_1, \dots, x_N]/(x_1-\tau).$ We let $\pi_n: \bb{A}^N \to \bb{A}$ be the projection map $(x_1,\dots,x_N) \mapsto x_n$.

\subsection{Some assumptions on the character variety} \label{subsection:assumptions} Throughout this section, we limit our attention to knots whose $\op{SL}(2,\C)$-character variety is $1$-dimensional. We usually also need to assume that the character scheme is reduced. We wish to keep track of how the statements proved in this section depend on these assumptions. It will therefore be convenient to label them separately. 

\begin{customassumption}{A.1} \label{assumption:dim1} The character scheme $\mathscr{X}(\G)$ is of dimension at most $1$. \end{customassumption}
\begin{customassumption}{A.2} \label{assumption:reduced} The character scheme $\mathscr{X}(\G)$ is reduced. \end{customassumption} 

We will see in \Cref{section:1dcomputations} that \ref{assumption:dim1} is satisfied for many familiar examples of prime knots, including all two-bridge knots, torus knots and certain hyperbolic pretzel knots. However, it fails for simple examples of composite knots, as we will see in \Cref{section:connectedsums}. \ref{assumption:reduced} has also been verified for two-bridge knots, torus knots and many pretzel knots. Moreover, there are no examples of knots for which \ref{assumption:reduced} is known to fail and it has been conjectured by Le and Tran (see \cite[Conjecture 2]{le-tran}) that \ref{assumption:reduced} is true for all knots.

It will also be useful to consider some assumptions on a complex parameter $\tau \in \C$. Since $P^{\bullet}_{\tau}(K)$ is only defined for $\tau \in (-2,2)$, we will sometimes need to restrict $\tau$ to the real interval $(-2,2) \sub \C$. However, it is in general more convenient to allow $\tau$ to take arbitrary complex values in this section.

\begin{customassumption}{B.1} \label{assumption:components} \label{assumption:first} The analytic space ${H}_{\tau}=\{x_1=\tau\}$ does not contain any component of $\pi^{-1}(-2,2) \cap X(\G)$. \end{customassumption}
\begin{customassumption}{B.2} \label{assumption:smooth} There is an analytic neighborhood of ${H}_{\tau} \cap X(\G) \sub X(\G)$ on which $X(\G)$ is smooth. \end{customassumption}
\begin{customassumption}{B.3} \label{assumption:alexander} If $x \in [0,1]$ has the property that $e^{4 \pi i x}$ is a root of the Alexander polynomial of $K$, then $\tau \neq 2 \cos (2 \pi x)$. \end{customassumption}
\begin{customassumption}{B.4} \label{assumption:proper} \label{assumption:last} There is an analytic neighborhood of $H_{\tau} \cap X(\G) \sub X(\G)$ on which the projection $\pi_1: X(\G) \to \C$ is proper.  \end{customassumption}

In practice, these assumptions may be difficult to verify for a given knot $K \sub Y$ without explicit knowledge of its character variety. However, the following proposition shows that they are satisfied on a cofinite subset of $\C$ whenever \ref{assumption:dim1} holds.

\prop \label{proposition:generic1} Suppose that the character variety of $K \sub Y$ satisfies \ref{assumption:dim1}. Then \ref{assumption:first}--\ref{assumption:last} are satisfied for all but finitely many choices of $\tau \in \C$. \eprop

\pf Since $\G$ is a finitely-generated group, $\mc{A}(\G)$ is a Noetherian ring. This means that $X(\G)$ is a \emph{Noetherian scheme}. It is a general fact that Noetherian schemes have at most finitely many irreducible components. Since $H_{\tau} \cap H_{\tau'}= \emptyset$ if $\tau \neq \tau'$, it follows that \ref{assumption:components} holds for all but finitely many $\tau \in (-2,2)$. 

The singular locus of a complex algebraic variety is Zariski closed and of complex codimension at least $1$. Since $X(\G)$ is $1$-dimensional by \ref{assumption:dim1}, it follows that $X(\G)$ has a $0$-dimensional set of singularities which is therefore finite. Hence \ref{assumption:smooth} holds on a cofinite subset of $\C$. 

It's clear that \ref{assumption:alexander} holds in a cofinite subset of $\C$. Finally, the fact that \ref{assumption:proper} holds for all but finitely many $\tau \in \C$ is the content of \Cref{lemma:post}, whose proof is posponed to \Cref{subsection:localbehavior}. \epf

\subsection{Relating the fiber product to the quotient} In \eqref{equation:charembedding}, we considered an embedding of $X(\G)$ into $\C^N$. The coordinate functions on $\C^N$ pull back under this embedding to the characters of certain products of generators of $\G$, and we assumed in particular that $x_1$ pulls back to $x^{g_1}_{11}+x^{g_1}_{22}$. This suggests that the relative character scheme $\mathscr{X}^{\tau}(\G)$ should be obtained by intersecting the image of the ordinary character variety $X(\G)$ with the hyperplane $H_{\tau}=\{x_1=\tau\}$. The purpose of this section is to make this precise. 

Recall from \eqref{equation:abar} that we have $$\ov{A}:= k[x_1, \dots, x_N] / \mc{I} =  \mc{A}_0(\G)^{\op{SL}_2}.$$ We let $f_{\tau} = (x_1- \tau) \in k[x_1,\dots,x_N]$. By abuse of notation, we will also view $f_{\tau}$ as an element of $\mc{A}_0(\G)^{\op{SL}_2} \sub \mc{A}_0(\G)$ via \eqref{equation:charembedding}, as well as an element of $\ov{A}$ via the quotient projection. 

Let us consider the quotient map
\eqs \mc{A}_0(\G) \to \mc{A}_0(\G)/(f_{\tau} \mc{A}_0(\G)). \eeqs
The $\op{SL}_2$ action on $\mc{A}_0(\G)$ induces a surjective map on the ring of invariants 
\eqs \label{equation: secondlastmap} \ov{A} \to (\mc{A}_0(\G)/(f_{\tau} \mc{A}_0(\G)))^{\op{SL}_2} = \mc{A}_0(\G)^{\op{SL}_2} / (f_{\tau} \mc{A}_0(\G))^{\op{SL}_2} . \eeqs
Observe that the ideal $(f_{\tau}) \sub \ov{A}$ is in the kernel of this map. Thus we obtain a surjection 
\eq \label{equation: lastmap} \ov{A}/(f_{\tau}) \to \mc{A}_0(\G)^{\op{SL}_2} / (f_{\tau} \mc{A}_0(\G))^{\op{SL}_2}. \eeq

\lem \label{lemma:a0a} Suppose that $K \sub Y$ satisfies \ref{assumption:reduced}. It then follows that \eq \mc{A}_0(\G)^{\op{SL}_2}/(f_{\tau} \mc{A}_0(\G))^{\op{SL}_2}= \mc{A}(\G)^{\op{SL}_2} / (f_{\tau} \mc{A}(\G))^{\op{SL}_2}= \lt( \mc{A}(\G)/ (f_{\tau} \mc{A}(\G)) \rt)^{\op{SL}_2}. \eeq \elem 
\pf According to \ref{assumption:reduced}, we have $\op{nil}(\mc{A}(\G)^{\op{SL}_2})= 0$. It then follows from \eqref{equation:a0} that $\mc{A}(\G)^{\op{SL}_2} = \mc{A}_0(\G)^{\op{SL}_2}$. Note next that \eqs (f_{\tau} \mc{A}_0(\G))^{\op{SL}_2}= (f_{\tau} \mc{A}(\G))^{\op{SL}_2} / (f_{\tau} \mc{A}(\G) \cap \op{nil} \mc{A}(\G))^{\op{SL}_2}. \eeqs Since $(f_{\tau} \mc{A}(\G) \cap \op{nil}(\mc{A}))^{\op{SL}_2} \sub \op{nil}(\mc{A}(\G)^{\op{SL}_2}=\emptyset$, we conclude that $(f_{\tau} \mc{A}_0(\G))^{\op{SL}_2} = (f_{\tau} \mc{A}(\G))^{\op{SL}_2}$. The conclusion follows. \epf

Observe that $\lt( \mc{A}(\G)/ (f_{\tau} \mc{A}(\G)) \rt)^{\op{SL}_2}$ is the coordinate ring of $\mathscr{X}^{\tau}(\G)$. Under the hypotheses of \Cref{lemma:a0a}, it follows that the map \eqref{equation: lastmap} induces a closed embedding of schemes \eq \label{equation:schemeembedding} \mathscr{X}^{\tau}(\G) \to ({H}_{\tau} \cap X(\G)). \eeq
It is easy to see that the underlying embedding of varieties is an isomorphism. We would like to show that this is also true for schemes.

\prop Suppose that $K \sub Y$ satisfied \ref{assumption:dim1}--\ref{assumption:reduced} and that $\tau \in \C$ satisfies \ref{assumption:components}. Then the closed embedding \eqref{equation:schemeembedding} is an isomorphism. \eprop

\pf  Let $R$ be the coordinate ring of the group scheme $\op{SL}_2$. Let $\mu: \mc{A}_0(\G) \to \mc{A}_0(\G) \otimes R$ be the map of $\C$-algebras which induces the $\op{SL}_2$-action on $\op{Spec} \mc{A}_0(\G)= R(\G)$. Note that $\mu(f) = f \otimes 1$ for $f \in \mc{A}_0(\G)^{\SL}$. 

It is equivalent for \eqref{equation:schemeembedding} and for \eqref{equation: lastmap} to be isomorphisms.  Since we know that \eqref{equation: lastmap} is surjective, we will show that it is injective. 

To this end, we need to show that $(f_{\tau} \mc{A}_0(\G))^{\op{SL}_2} \sub f_{\tau} \mc{A}_0(\G)^{\op{SL}_2}$.  So suppose that $(f_{\tau} g) \in (f_{\tau} \mc{A}_0(\G))^{\op{SL}_2}$ while $g \notin \mc{A}_0(\G)^{\op{SL}_2}$. This implies that
\begin{align} \label{equation:reduced}
0= \mu(f_{\tau} g)- \mu(f_{\tau})\mu(g) \nonumber 
= (f_{\tau} g) \otimes 1 - (f_{\tau} \otimes 1)\mu(g) \nonumber 
&= (f_{\tau} \otimes 1)(g \otimes 1) - (f_{\tau} \otimes 1) \mu(g) \nonumber \\
&=(f_{\tau} \otimes 1)(g \otimes 1- \mu(g)).
\end{align}

Given a closed point $x \in \op{SpecMax}R,$ there is an evaluation map $\op{ev}_x: \mc{A}_0(\G) \otimes R \to \mc{A}_0(\G)$.  Since $\mc{A}_0(\G)$ and $R$ are reduced $\C$-algebras (i.e. the associated schemes are reduced), it can be shown that $g\otimes 1$ and $\mu(g)$ are equal if and only if they are equal at all closed points.\footnote[1]{At the cost of slightly complicating the argument, we could avoid appealing to the fact that $\mc{A}_0(\G)$ is reduced; see \cite[Sec.\ 3, Rmk.\ 2.4]{gulbrandsen}.} Since $g \not \in \mc{A}_0(\G)^{\SL}$, we see that there exists $x_0 \in \op{SpecMax} R$ such that $\op{ev}_{x_0} (g \otimes 1) \neq \op{ev}_{x_0}(\mu(g))$. 

Let us now apply $\op{ev}_{x_0}$ to \eqref{equation:reduced} to obtain  \eq 0 = \op{ev}_{x_0} ( f_{\tau} \otimes 1) \op{ev}_{x_0}(g \otimes 1- \mu(g)) = f_{\tau} \op{ev}_{x_0}(g \otimes 1- \mu(g)).\eeq

We conclude that $f_{\tau} \in \mc{A}_0(\G)$ is a zero divisor. But since $\mc{A}_0(\G)$ is reduced, it follows that $f_{\tau}$ vanishes on an irreducible component $C \sub \op{Spec} \mc{A}_0(\G) = R(\G)$ (see \cite[1.4.8(2)]{manin}). Let $\pi: R(\G) \to X(\G)$ be the natural projection. Observe that $\pi(C) \subset H_{\tau}$. We wish to show that $\ov{\pi(C)}$ is a $1$-dimensional component of $X(\G)$. 

Let's first show that $\ov{\pi(C)}$ is not zero-dimensional. If it were, then it would consist of a single point $[\rho]$. Since we can always find an abelian representation with trace $\tau$, it follows that $[\rho]$ must be abelian. But one can then directly verify that $\pi^{-1}(\pi(C))= \pi^{-1}([\rho])$ does not contain any irreducible components of $X(\G)$. Indeed, if $\rho' \in \pi^{-1}(\pi(C))$, then $\rho'$ is abelian and hence can be deformed through abelian representations. Hence $\ov{\pi(C)}$ is at least one-dimensional.  Therefore it is exactly one-dimensional by \ref{assumption:dim1}.  This is a contradiction in view of \ref{assumption:components}. \epf

\defi Let ${X}_0(\G) \sub X(\G)$ be the open subscheme obtained from $X(\G)$ by removing all components which do not contain an irreducible representation. \edefi

\prop \label{proposition:intersectionhyperplane} Suppose that $K \subset Y$ satisfies \ref{assumption:dim1}--\ref{assumption:reduced} and $\tau \in \C$ satisfies \ref{assumption:components} and \ref{assumption:alexander}. Then there is an isomorphism $\mathscr{X}^{\tau}_{irr}(\G) \to {H}_{\tau} \cap {X}_0(\G)$. \eprop 

\pf Since $\mathscr{X}^{\tau}_{irr}(\G) \hookrightarrow \mathscr{X}^{\tau}(\G)$ is an open subscheme, it follows from the previous proposition that there is an open embedding $\mathscr{X}^{\tau}_{irr}(\G) \hookrightarrow \mathscr{X}^{\tau}(\G) \to {H}_{\tau} \cap X(\G)$. Observe that the image of this map lands inside ${H}_{\tau} \cap {X}_0(\G)$. It suffices to verify that $\mathscr{X}^{\tau}_{irr}(\G)  \to {X}_0(\G)$ is surjective on closed points to obtain the desired claim.

If $\rho$ is irreducible, then $[\rho] \in {H}_{\tau} \cap {X}_0(\G)$ is in the image of the above map.  If $\rho$ is reducible, then note by the definition of $X_0(\G)$ that $[\rho] \in X(\G)$ cannot belong to the component of abelian representations. It follows from \cite[Prop.\ 6.2]{planecurves} that $\tau = 2 \cos (2 \pi x)$ where $e^{4 \pi i x}$ is a root of the Alexander polynomial of $K$. This contradicts \ref{assumption:alexander}. \epf

We now analyze the structure of $H_{\tau} \cap X_0(\G)$. 

\prop \label{proposition: subfinal} Suppose that $K \sub Y$ satisfies \ref{assumption:dim1}--\ref{assumption:reduced} and that $\tau \in \C$ satisfies \ref{assumption:components}--\ref{assumption:smooth}. Then we can write ${H}_{\tau} \cap {X}_0(\G) = \op{Spec} R_{\tau}$, where $$R_{\tau}=  \frac{\C [\e_1]}{\e_1^{n_1}} \tms \dots \tms \frac{\C [\e_k]}{\e_k^{n_k}}$$ for some values $n_1, \dots, n_k \geq 1$. \eprop 

\pf It follows from \ref{assumption:dim1} and \ref{assumption:components} that ${H}_{\tau} \cap X(\G)$ and hence ${H}_{\tau} \cap {X}_0(\G)$ is zero-dimensional.  Using \ref{assumption:dim1} and \ref{assumption:smooth}, we find that ${H}_{\tau} \cap {X}_0(\G)$ embeds into a smooth finite-type $\C$-scheme of dimension $1.$ The claim then results from \Cref{lemma:zerodim} below. \epf

\lem \label{lemma:zerodim} Let $Z \sub S$ be an embedding of a zero-dimensional $\C$-scheme of finite type into a smooth, one-dimensional $\C$-scheme of finite type. Then $Z= \op{Spec}B$ for $B=  \frac{\C[\e_1]}{\e_1^{n_1}} \tms \dots \tms \frac{\C[\e_k]}{\e_k^{n_k}}.$ \elem

\pf We can easily reduce to the case where $Z= \op{Spec} B$ is connected and hence $l=1.$ Then the closed embedding $Z \subset S$ is induced by a surjective ring map $A \to B.$ Since $\op{Spec}B$ is connected and of dimension $0,$ it follows that $B$ is a local ring with maximal ideal $\frak{m}.$ Let $\frak{p}$ be the kernel of the composition $A \to B \to B/\frak{m}.$ Then we have a surjection $A_{\frak{p}} \to B_{\frak{m}}= B.$ 

Note that $B$ is a Noetherian ring of dimension $0.$ It follows that $B$ is an Artinian ring, and hence complete. Hence we have a surjection $\widehat{A}_{\frak{p}} \to \widehat{B} = B.$ But $A_{\frak{p}}$ is a discrete valuation ring since $S$ is smooth. Hence $\widehat{A}_{\frak{p}} \simeq \C[[t]].$ Since the ideals of $\C[[t]]$ are all of the form $(t^n)$ for $n \geq 0$, we conclude that $B \simeq \C[[t]](t^n)= \C[t]/(t^n).$ \epf

\cor \label{corollary:locallyinvariant} Suppose that $K \sub Y$ satisfies \ref{assumption:dim1}--\ref{assumption:reduced} and that $\tau \in  (-2,2)$ satisfies  \ref{assumption:components}, \ref{assumption:smooth} and \ref{assumption:alexander}. Let $R_{\tau}$ be as in \Cref{proposition: subfinal}. Then $P^{\bullet}_{\tau}(K)$ is supported in degree $0$ and we have $\HP^0_{\tau}(K)= \Z^d$, where $d=\sum_1^l n_i$.  \ecor

\pf Apply Propositions~\ref{proposition:intersectionhyperplane} and \ref{proposition: subfinal}. If $n_i=1$ for all $i$, then $\op{Spec} R_{\tau}$ is a smooth scheme of dimension $0$. It follows from \Cref{proposition:localsystem} that $P^{\bullet}_{\tau}(K)$ consists of a copy of $\mathbb{Z}$ supported in degree $0$ over each point. 

If $n_j>1$ for some $j$, then $\op{Spec} R_{\tau}$ is not smooth. However, we can apply \Cref{proposition:nonreducedcomputation}, which implies that $P^{\bullet}_{\tau}(K)$ is still supported in degree $0$, with stalks isomorphic to $\Z^{n_j}$ over each point of multiplicity $n_j$. \epf

\subsection{Independence of parameters} \label{subsection:localbehavior} We now move to analyzing the dependence of $\HP^*_{\tau}(K)$ on the parameter $\tau$. If $K \sub Y$ satisfies \ref{assumption:dim1}--\ref{assumption:reduced}, we will show that $\HP^*_{\tau}(K)$ is constant over all values of $\tau$ which satisfy \ref{assumption:first}--\ref{assumption:last}. 

Recall that in \eqref{equation:generatingset} we chose a set of generators for $\G$. This gives embeddings of $X_0(\G)$ and $X(\G)$ in some affine space $\bb{C}^n$. Let $\ov{X_0}(\G) \sub \bb{CP}^n$ and $\ov{X}(\G) \sub \bb{CP}^n$ denote the projective closures of $X_0(\G)$ and $X(\G)$ respectively. We endow $\ov{X_0}(\G)$ and $\ov{X}(\G)$ with the unique reduced scheme structure, which ensures that they are well-defined as (possibly singular) schemes.  We also let $D_{\tau} \sub \bb{CP}^n$ be the projective closure of $H_{\tau}$, i.e. $D_{\tau} = \bb{CP}^{n-1} \sub \bb{CP}^n$ is a hyperplane.

If $X \sub \bb{CP}^n$ is a projective scheme of dimension $n$, we let $p_X(m) \in \Z[m]$ be the Hilbert polynomial of $X.$ Recall that the \emph{degree} of $X$ is the leading coefficient of $p_X$ times $n!$.

We then have the following version of B\'{e}zout's theorem. 

\thm[18.6.K \cite{vakil}] \label{theorem: bezout} Let $X \sub \bb{CP}^n$ be a reduced projective subscheme of dimension at least $1$ and let $D$ be a hypersurface. Suppose that $D$ does not contain any components of $X.$ Then $\op{deg}(D \cap X)= \op{deg}(D) \op{deg}(X) .$ \ethm

\cor \label{corollary:deginvariance} If $\tau \in \C$ satisfies \ref{assumption:components}, then we have $\op{deg}(D_{\tau} \cap \ov{{X}_0}(\G))= \op{deg}(D_{\tau}) \op{deg}(\ov{X_0}(\G)) = \op{deg}(\ov{X_0}(\G)).$ \ecor

The following technical lemma will be useful. 

\lem \label{lemma:continuouspoints} Assume that $K \sub Y$ satisfies \ref{assumption:dim1}--\ref{assumption:reduced}. Let $U_1\subset \C$ be the Zariski open subset of $\tau \in \C$ that satisfy \ref{assumption:components}. Then, over $\tau \in U_1$, the subset of intersection points $\{D_{\tau} \cap \ov{X_0}(\G)\} \sub \bb{CP}^n$ varies continuously in $\tau$ with respect to the Euclidean topology. \elem

\pf Let $\op{Hilb}_d(n)$ be the Hilbert scheme parametrizing all subschemes of $\bb{CP}^n$ whose Hilbert polynomial is the positive integer $d$. Topologically, we can think of $\op{Hilb}_d(n)$ as the set of $d$-tuples of unordered, not necessarily distinct points of $\bb{CP}^n$. 

By definition, for $\tau \in U_1$, the hypersurface $H_{\tau}$ does not contain any component of $\ov{X}(\G)$. Since the intersection $D_{\tau} \cap \ov{X_0}(\G)$ has degree $d$, there is an induced map $h: U_1 \to \op{Hilb}_d(n)$ given by $h(\tau) = \ov{X}_0(\G) \cap D_\tau.$ Since $h$ is in particular continuous with respect to the Euclidean topology, the lemma follows.  \epf

\Cref{lemma:continuouspoints} leads us to a useful reformulation of \ref{assumption:proper}. Let us define $L_{\infty} \sub \bb{CP}^n$ to be the hyperplane at infinity associated to the projective closure $X_0(\G) \sub \ov{X_0}(\G)  \sub \bb{CP}^n= \bb{A}^n \cup L_{\infty}$. 

Under the hypothesis that $\tau \in \C$ satisfies \ref{assumption:reduced} and \ref{assumption:components}, we have
\eq \label{equation:degreeadd} \op{deg}(D_{\tau} \cap X_0(\G))+ \op{deg}(D_{\tau} \cap (\ov{X_0}(\G) \cap L_{\infty})) = \op{deg}( D_{\tau} \cap \ov{X_0}(\G)). \eeq
Now \Cref{corollary:deginvariance} implies that the right hand side of \eqref{equation:degreeadd} is constant. It then follows by \Cref{lemma:continuouspoints} that \ref{assumption:proper} fails precisely at those points where $\op{deg}(D_{\tau} \cap (\ov{X_0}(\G) \cap L_{\infty})) $ jumps. 

We can now prove a lemma which was already promised in the proof of \Cref{proposition:generic1}.

\lem \label{lemma:post} Assume that $K \sub Y$ satisfies \ref{assumption:dim1}--\ref{assumption:reduced}. Then \ref{assumption:proper} holds for all but finitely many choices of $\tau \in \C$. \elem

\pf As in \Cref{lemma:continuouspoints}, let $U_1 \sub \C$ be the cofinite set on which \ref{assumption:components} is satisfied. By our previous discussion, it is enough to check that the degree of $D_{\tau} \cap (\ov{X_0}(\G) \cap L_{\infty})$ jumps at finitely many points of $U_1$. 

For $\tau \in U_1$, note that $(D_{\tau} \cap \ov{X_0}(\G) \cap L_{\infty})= (D_{\tau} \cap L_{\infty}) \cap ( \ov{ X_0}(\G)  \cap L_{\infty}),$ where equality holds both as topological spaces and as schemes. Now  $( \ov{ X_0}(\G)  \cap L_{\infty})$ is a zero-dimensional scheme, i.e. topologically a disjoint union of points $p_1,\dots,p_l$. We can think of $(D_{\tau} \cap L_{\infty})= \bb{CP}^{n-2} $ as a family of hyperplanes in $\bb{CP}^{n-1}$.  

The key algebro-geometric fact is that, for any positive integer $m \geq 1$, the condition for $(D_{\tau} \cap L_{\infty})$ to intersect a point $p_i$ with multiplicity $m$ is Zariski closed. This implies that the jumps occur in a Zariski closed subset of $U_1 \sub \C$, i.e. at a finite set of points. Since $U_1 \sub \C$ is cofinite, this proves the lemma. \epf

\cor \label{corollary:locallyconstant} Suppose that $K \sub Y$ satisfies \ref{assumption:dim1}--\ref{assumption:reduced} and let $U_{14}\subset \C$ be the Zariski open subset of $\tau \in \C$ that satisfy \ref{assumption:components} and \ref{assumption:proper}. Then $\op{deg}(H_{\tau} \cap X_0(\G))$ is independent of $\tau \in U_{14}$. \ecor

\pf We know that $\op{deg}( D_{\tau} \cap \ov{X_0}(\G))$ and $\op{deg}(D_{\tau} \cap (\ov{X_0}(\G) \cap L_{\infty}))$ are constant on $U_{14}$ by \Cref{corollary:deginvariance} and the discussion following \Cref{lemma:continuouspoints}. The conclusion now follows from \eqref{equation:degreeadd}. \epf

\cor \label{corollary:locallyconstant1} Suppose that $K \sub Y$ satisfies \ref{assumption:dim1}--\ref{assumption:reduced}. Let $U  \subset \C$ be the Zariski open subset of $\tau \in \C$ that satisfy   the four assumptions \ref{assumption:first}--\ref{assumption:last}; cf. \Cref{proposition:generic1}. Then, for $\tau \in (-2,2) \cap U$, the hypercohomology $\HP^*_{\tau}(K)$ is supported in degree $0$ and is independent of $\tau$. \ecor

\pf  We just saw in \Cref{corollary:locallyconstant} that $\op{deg}(H_{\tau} \cap X_0(\G))$ is constant for $\tau \in U$. Let us then set $d= \op{deg}(H_{\tau} \cap X_0(\G))$ for $\tau \in U$. 

According to \Cref{proposition: subfinal}, we have $H_{\tau} \cap X_0(\G)= \op{Spec}R_{\tau}$ where $R_{\tau}= \frac{\C [\e_1]}{\e_1^{n_1}} \tms \dots \tms \frac{\C [\e_k]}{\e_l^{n_k}}$. One can verify using the definition of degree that $\op{deg}(\op{Spec} R_{\tau})= \sum_{i=1}^k n_i$. Hence $d=\op{deg}(H_{\tau} \cap X_0(\G))= \sum_{i=1}^k n_i$. It now follows from \Cref{corollary:locallyinvariant} that $\HP^*_{\tau}(K)$ is supported in degree $0$ with $\HP^*_{\tau}(K)= \Z^d$.  \epf

\subsection{Relation to the $\widehat{A}$-polynomial} \label{sec:Apol} Let us choose a pair of loops $\mu, \lambda \sub \d E_K$ so that $(\mu, \lambda)$ is a basis for $\pi_1(\d E_K, x_0)= \Z \oplus \Z$ and $\lambda$ is nullhomologous in $E_K$. These loops are unique up to orientation and isotopy in $E_K$. 

Let us now outline the definition of the $A$- and $\widehat{A}$-polynomials. These are polynomials in two complex variables $m, l$ which are built from the $\op{SL}(2,\C)$-character variety of a knot in a homology $3$-sphere. The $A$-polynomial was introduced in \cite{planecurves} and has been extensively studied since then.  The $\widehat{A}$-polynomial is a close relative; it was defined by Boyer--Zhang \cite{boyer-zhang} and it also appears in the work of Boden and Curtis \cite{boden-cur} from which we draw our exposition. 

To define these polynomial invariants, we consider the inclusion $\d E_K \to E_K$ which induces a map $i_*: \pi_1(\d E_K, x_0) \to \pi_1(E_K, x_0)= \G$. Let $r: X(E_K) \to X( \d E_K)$ be the induced map on character varieties. Referring to \eqref{equation:generatingset}, we may assume that $i_*(\mu)= g_1$.

It is a general fact that points on the character variety $X(\d E_K)$ are in bijective correspondence with completely reducible representations $\pi_1(\d E_K, x_0) \to \op{SL}(2,\C)$. Since $\pi_1(\d E_K, x_0)= \Z \oplus \Z$ is abelian, the set of completely reducible representations coincides with the set of diagonal representations. 

We may thus define a map $t: \C^* \tms \C^* \to X(\d M)$ by sending a pair $(x,y) \in \C^* \tms \C^*$ to the unique diagonal representation $\rho$ such that $x$ is an eigenvalue of $\rho(\mu)$ and $y$ is an eigenvalue of $\rho(\lambda)$. One can check that $t$ is $2:1$ away from the points $(1,1), (1,-1), (-1,1), (-1,1)$. 

To define the $A$-polynomial, consider the affine variety $\ov{t^{-1}(r(X(E_K)))} \sub \C^* \tms \C^* \sub \C^2$.  It is a fact of algebraic geometry that any codimension $1$ subvariety in $\C^n$ is the vanishing locus of a principal ideal. It therefore makes sense to consider the following definition.

\defi (\cite{planecurves}) Let $A(K)= A(m, l) \in \C[m, l]$ be the generator of the vanishing ideal of the $1$-dimensional components of $\ov{t^{-1}(r(X(E_K)))} \sub \C^2$. We call $A(m, l)$ the \emph{$A$-polynomial} of $K \sub Y$. This polynomial is well-defined up to multiplication by nonzero scalars.
 \edefi 

To define the $\widehat{A}$-polynomial, let $\{ X_j\}$ be an enumeration of the one-dimensional components of $X(E_K)$ which contain an irreducible character and have the property that the Zariski closure $\ov{r(X_j)}$ is $1$-dimensional.   Let $\a_j$ be the degree of $r: X_j \to \ov{r(X_j)} \sub X( \d M)$. Finally, let $\widehat{A}_j(m, l)$ be the defining polynomial (of smallest degree) of $\ov{t^{-1}(r(X_j))} \sub \C^* \tms \C^* \sub \C^2$.

\defi \label{definition:ahatpoly} (\cite{boyer-zhang}) Let $\widehat{A}(K):= \prod_j \widehat{A}_j(m, l)^{\a_j}$. We say that $\widehat{A}(K)$ is the $\widehat{A}$-polynomial of $K \sub Y$. This polynomial is well-defined up to multiplication by nonzero scalars. \edefi

The relation between the $\widehat{A}$-polynomial and our knot invariant is expressed in the following statement. 

\thm \label{theorem:apolylink} Let $Y$ be an integral homology sphere. Suppose  that $K \sub Y$ satisfies \ref{assumption:dim1}--\ref{assumption:reduced} and that $\tau \in (-2,2)$ satisfies \ref{assumption:components}--\ref{assumption:proper}. It then follows that $\HP^*_{\tau}(K)= \mathbb{Z}^d$ in degree $0$, where $d= \op{deg}_{l} \widehat{A}(m, l)$. \ethm

Before proving \Cref{theorem:apolylink}, we need the following technical lemma. 

\lem \label{lemma:reducedtechnical} Assume that $K \sub Y$ satisfies \ref{assumption:dim1}--\ref{assumption:reduced}. Then for all but finitely many choices of $\tau \in \C$, the scheme theoretic intersection $H_{\tau} \cap X_0(\G) \sub \C^n$ is reduced and is topologically a finite set of points. \elem

\pf As in  \Cref{corollary:locallyconstant}, let $U_{14} \sub \C$ be the cofinite set on which \ref{assumption:first} and \ref{assumption:last} are satisfied. We saw there that $H_{\tau} \cap X_0(\G)$ has degree $d$ for all $\tau \in U_{14}$. It follows as in the proof of \Cref{lemma:continuouspoints} that there is a map $U_{14} \to \op{Hilb}_{d}(\C^n)$ sending $\tau \in U_{14}$ to $H_{\tau} \cap X_0(\G)$. Non-reducedness of the scheme-theoretic intersection $H_{\tau} \cap X_0(\G)$ is equivalent to two points colliding in the Hilbert scheme. This is a Zariski closed condition. Hence, if we can show that this condition is not always satisfied, it will follow that it is satisfied for at most finitely many points. 

To this end, referring to the group presentation \eqref{equation:generatingset}, observe that the $\{H_{\tau}\}_{\tau \in \C}$ are set theoretically just affine coordinate hyperplanes in $\C^n$, i.e. $H_{\tau}= \{ (x_1,\dots,x_n) \in \C^n \mid x_1= \tau\}$. Let us now consider the map $\pi: X_0(\G) \to \C$ given by projecting onto the first coordinate. If $H_{\tau} \cap X_0(\G)$ has a non-reduced point, then $\tau$ is a critical value of $\pi$. Hence it is enough to show that the set of critical values of $\pi$ is not all of $\C$. But since $X_0(\G)$ is a smooth manifold away from a finite set of singularities, this is a direct corollary of Sard's theorem. \epf

We can now give the proof of \Cref{theorem:apolylink}. 

\pf[Proof of \Cref{theorem:apolylink}] Let $C= \bigcup_j X_j \sub \C^n$ be the union of the $1$-dimensional components of $X(\G) \sub \C^n$ which contain an irreducible representation. Note that $C$ is precisely the closure of $X_0(\G)$ in $\C^n$. It follows from \Cref{corollary:locallyinvariant} and \Cref{lemma:reducedtechnical} that there is a cofinite set $V \sub (-2,2)$ such that $\sum_j \# \{ H_{\s} \cap X_j\} = d$ for $\s \in V$, where $\HP^*_{\s}(K)= \Z^d$ in degree $0$. By \Cref{corollary:locallyconstant1}, we also have $\HP^*_{\tau}(K)= \Z^d$ in degree $0$ for the given value of $\tau$.

In the notation of \Cref{definition:ahatpoly}, the restriction of $r$ to $X_j$ is a degree $\a_j$ map. This means that there is a Zariski open subset of $\ov{r(X_j)}$ in which every point has $\a_j$ preimages. Since $r(X_j)$ has dimension 1, this open subset has finite complement. Hence there is a cofinite subset $\tilde{V} \sub V \sub (-2,2)$ such that every point in $r(H_{\s} \cap X_j)$ has precisely $\a_j$ preimages for $\s \in \tilde{V}$. It follows that $r(H_{\s} \cap X_j)$ has cardinality $\# \{ H_{\s} \cap X_j \} / \a_j$ for $\s \in \tilde{V}$. 

We now consider the preimage of $r(H_{\s} \cap X_j)$ under $t$ for $\s \in \tilde{V}$. Since $\s \neq \pm 2$, this has cardinality $2 (\# \{ H_{\s} \cap X_j \} /\a_j)$. Fixing $m_0$ so that $2 \cos (m_0)= \tau'$, we observe that half of the points in the set $t^{-1}(r(H_{\s} \cap C_j))$ are solutions to $\widehat{A}_j(K)(m_0, \l)=0$ and the other half to $\widehat{A}_j(K)(1/m_0, \l)=0$.  In other words, the hyperplane $\{m_0=0\}$ intersects the zero set of $\widehat{A}$ in $\# \{ H_{\s} \cap X_j \} /\a_j$ points. 

Since this holds for all $\s \in \tilde{V}$, we conclude that $\op{deg}_{\l} \widehat{A}_j(K)= \# \{ H_{\s} \cap X_j \}/\a_j$.  But $\widehat{A}(K)= \prod_j \widehat{A}_j(K)^{\a_j}$ so $\op{deg}_{\l} \widehat{A}(K)= \sum_j \a_j (\# \{ H_{\s} \cap X_j \}/\a_j)= \sum_j \# \{ H_{\s} \cap X_j \}= d$. \epf

\pf[Proof of \Cref{thm:degd}] This is an immediate consequence of Proposition~\ref{proposition:generic1} and Theorem~\ref{theorem:apolylink}. 
\epf

\section{Some computations} \label{section:1dcomputations} In the previous section, we studied $P^{\bullet}_{\tau}(K)$ for knots satisfying \ref{assumption:dim1} and \ref{assumption:reduced}. We showed that $\HP^*_{\tau}(K)$ is generically independent of $\tau$ and expressible in terms of the $\widehat{A}$-polynomial of $K$; cf. \Cref{proposition:generic1} and \Cref{theorem:apolylink}.  In this section, we will consider examples of knots which satisfy \ref{assumption:dim1}--\ref{assumption:reduced}. We will use the results of the previous section to compute $\HP^*_{\tau}(K)$ for some of these knots. 

\subsection{Some knots satisfying \ref{assumption:dim1} and \ref{assumption:reduced}}  \label{subsection:dim1prelim} The first class of knots which we consider is the following. 

\defi A knot $K \sub S^3$ is said to be \emph{small} if its complement does not contain a closed, orientable, essential surface. Otherwise, $K$ is said to be \emph{large}. \edefi

The class of small knots is known to contain all 2-bridge knots \cite{hatcher-thurston} and all torus knots \cite{tsau}. A well-known result of Culler and Shalen \cite[Sec.\ 2.4]{planecurves} implies that the character variety of a small knot is at most $1$-dimensional. It follows that small knots satisfy \ref{assumption:dim1}. More information on the properties of small knots can be found in the survey of Ozawa \cite[Sec.\ 5]{ozawa}. 

We also wish to consider a distinguished class of hyperbolic knots.  Recall that a hyperbolic knot can be characterized by the property that its complement admits a discrete, faithful representation to $\op{PSL}(2,\C)$. It is a general fact that this distinguished representation can always be lifted to $\op{SL}(2,\C)$; see \cite[Prop.\ 3.1.1]{culler-shalen}.  Note however that the $\op{SL}(2,\C)$-character variety of a hyperbolic knot may in general contain many components, and one does not expect all of them to contain a discrete, faithful representation. 

\defi We say that a knot $K \sub S^3$ is \textit{slim} if it is hyperbolic and if every component of $X(\G)$ which contains an irreducible representation also contains a discrete, faithful representation. \edefi

Given an orientable hyperbolic $3$-manifold of finite volume with $n$ cusps, a well-known result of $3$-manifold topology which is usually attributed to Thurston (see \cite[Sec.\ 4.5]{shalen}) states that any component of its $\op{SL}(2,\C)$-character variety which contains a discrete, faithful representation is $n$-dimensional. In particular, any component of the character variety of a hyperbolic knot which contains a discrete, faithful representation is $1$-dimensional. It follows that all components of the character variety of a slim knot which contain an irreducible representation are $1$-dimensional. The remaining component of the character variety consists of abelian representations and is always $1$-dimensional. It follows that slim knots satisfy \ref{assumption:dim1}. 

It can be shown using \cite{burde} that hyperbolic twist knots are slim, because their character variety has only one component containing an irreducible representation. It has also been shown in \cite{mattman} that the $(-2,3,n)$ pretzel knot is slim provided that it is hyperbolic and that $n$ is not divisible by $3$. In fact, $(-2,3,n)$ is hyperbolic precisely when $n  \notin \{1,3,5\}$; see \cite[p.\ 1834]{macasieb-mattman}. We refer the reader to \cite[p.\ 2]{boyer-luft-zhang} for a discussion of these facts.  

The classes of small knots and slim knots overlap, but neither one is contained in the other. For example, it is a fact that all twist-knots are two-bridge knots, which implies that hyperbolic twist-knots are both small and slim. However, any two-bridge knot which is not hyperbolic (e.g. the trefoil) is automatically small but not slim. 

An example of a slim knot which is not small is the $(-2,3,7)$ pretzel knot, which is also known as the Fintushel-Stern knot. It is easy to see that this knot admits multiple Conway spheres, i.e. $2$-spheres which intersect the knot transversally in $4$ points. It can be shown that any knot which admits a Conway sphere automatically admits a genus $2$ incompressible surface in its complement, and therefore fails to be small. We refer to \cite{budney} for an illuminating exposition of these facts.

We now discuss to what extent the classes of small knots and slim knots satisfy \ref{assumption:reduced}. As we noted in \Cref{subsection:preparationsmall}, \ref{assumption:reduced} has been conjectured to hold for all knots in $S^3$. This has been verified for many examples, including all two-bridge knots and torus knots; see \cite[Sec.\ 2]{boden-cur}. It has also been verified by Le and Tran \cite{le-tran} for the $(-2,3,2n+1)$ pretzel knot for all $n \in \Z$. We can therefore state the following result, which can be viewed as a corollary of \Cref{theorem:apolylink} and the preceding discussion. The first part of this corollary is exactly Theorem~\ref{thm:concrete} from the Introduction.

\cor \label{corollary:knotpolyrelation} Suppose that $K \sub S^3$ is a two-bridge knot, a torus knot, or a $(-2,3, 2n+1)$ pretzel knot where $n \neq 0,1,2$ and $2n+1$ is not divisible by $3$. Then, for all but finitely many $\tau \in (-2,2)$, $\HP^*_{\tau}(K)= \Z^d$ in degree $0$ where $d= \op{deg}_{\l} \widehat{A}(K)$. More generally, this holds for all small and slim knots which satisfy \ref{assumption:reduced}. \ecor 

From a practical standpoint, \Cref{corollary:knotpolyrelation} does not help with computing $\HP^*_{\tau}(K)$ unless one is able to get a handle on the $\widehat{A}$-polynomial of $K$. Unfortunately, there appears to be little in the way of general computations of the $\widehat{A}$-polynomial in the literature (but see \cite{boden-cur} for the $\widehat{A}$-polynomial of certain Whitehead doubles). In contrast, the ordinary $A$-polynomial is known to be effectively computable, and has been computed for wide classes of examples. One can take advantage of these computations in cases where the $A$- and $\widehat{A}$-polynomial coincide. The class of slim knots turns out to have this property, and provides a source of examples which will be discussed in \Cref{subsection:slimcomputations}. 

\subsection{The trefoil and figure-eight} \label{subsection:trefoilfigure8} For the trefoil and figure-eight knots, the $\op{SL}(2,\C)$-character variety is well-understood. We can therefore compute $P^{\bullet}_{\tau}(K)$ explicitly. The arguments of this section are elementary and could potentially be pushed to other two-bridge knots, but they quickly become tedious. 

Let $K =3_1\sub S^3$ be a right-handed or left-handed trefoil. It follows from the above discussion that $K \sub S^3$ satisfies \ref{assumption:dim1}--\ref{assumption:reduced}, for example because it is a two-bridge knot.  According to \cite{porti}, the character variety of $K$ is given by the equations 
\eq \label{equation:trefoilvariety} X(\G)= \{(x,y) \mid (y-2)(x^2-y-1) =0 \}, \eeq where $\{(y-2)=0\}$ is the component of reducible representations, and $x$ is the trace of a meridian.  

One can check by hand that \ref{assumption:components} and \ref{assumption:proper} are always satisfied, and that \ref{assumption:smooth} is satisfied away from the points $(\pm \sqrt{3}, 2)$ which correspond to the intersection of the component of reducible representations $\{(y-2)=0\}$ with the component of irreducibles $\{x^2-y-1=0\}$. Using the fact that the Alexander polynomial of the trefoil is $\Delta(t)=t^2-t+1$, one can check that these are precisely the points ruled out by \ref{assumption:alexander}. 

In summary, the trefoil $K \sub S^3$ satisfies \ref{assumption:dim1}--\ref{assumption:reduced} and $\tau \in (-2,2)$ satisfies \ref{assumption:first}--\ref{assumption:last} provided that $\tau \neq \pm \sqrt{3}$.  It follows from \Cref{theorem:apolylink} that, for $\tau \in (-2,2) \setminus \{ -\sqrt{3}, \sqrt{3}\}$, we have $\HP^*_{\tau}(K)= \mathbb{Z}^d$ in degree $0$, where $d= \op{deg}_{\l} \widehat{A}(m, \l)$.

To compute $d$, let $\chi_l: X(\G) \to \C$ be the trace of the longitude of $K$. Observe that $(x, y) \mapsto (x, \chi_{l})$ restricts on $X(\G) \sub \C^2$ to an injective map which thus has degree $1$. It follows that $A(K)= \widehat{A}(K)$. 

Now $A(K)$ is computed for instance in \cite{cooper-long} and equals $ 1+m^6\l$ or $\l + m^6$, depending on the orientation.  The $\l$-degree is independent of the orientation, and we find $\op{deg}_{\l} A(K)= 1$. It follows from \Cref{theorem:apolylink} that $\HP^*_{\tau}(K)= \Z$ in degree zero, for $\tau \in (-2,2) \setminus \{ -\sqrt{3}, \sqrt{3}\}$.

Let us also consider what happens when $\tau = \pm \sqrt{3}$. We will show that $X^{\sqrt{3}}_{irr}(\G)$ and $X^{-\sqrt{3}}_{irr}(\G)$ are empty. This implies that $\HP^*_{\sqrt{3}}(K)= \HP^*_{-\sqrt{3}}(K)=0$ in all degrees. 

To this end, note that $X^{\tau}_{irr}(\G)$ is an open subvariety of the affine variety $X^{\tau}(\G)$, for any $\tau \in \C$. It is then a fact that $X^{\tau}_{irr}(\G)$ must have a closed point if it is non-empty; see \cite[3.6.J.(a)]{vakil}. It is therefore enough to show that $X^{\sqrt{3}}_{irr}(\G)$ and $X^{-\sqrt{3}}_{irr}(\G)$ have no closed points.

Recall first from \Cref{proposition:representablefunctor} that closed points on $X^{\tau}_{irr}(\G)$ for $\tau \in \C$ correspond to irreducible representations with trace $\tau$ along the meridian. Recall also that the closed points of $X(\G)$ correspond to completely reducible representations $\G \to \op{SL}(2,\C)$. Referring to \eqref{equation:trefoilvariety}, we see that the lines $\{x= \sqrt{3}\}$ and $\{x= - \sqrt{3}\}$ intersect the character variety of the trefoil in a single point. This means that the trefoil admits unique completely reducible representations having trace $\sqrt{3}$ and $-\sqrt{3}$ respectively along the meridian. 

It's easy to construct abelian representations having this property: just send a generator of $\Z= \op{Ab}(\G)$ to $\op{diag}(e^{ \pi i /6},e^{- \pi i/6})$ and $\op{diag}(e^{5\pi i/6}, e^{-5\pi i /6})$ respectively. Since abelian representations are in particular completely reducible, it follows that there are no irreducible representations with trace $\sqrt{3}$ and $-\sqrt{3}$ respectively along the meridian. Hence $X^{\sqrt{3}}_{irr}(\G)$ and $X^{-\sqrt{3}}_{irr}(\G)$ have no closed points.

We conclude that, as advertised in the Introduction, 
$$\HP^*_{\tau}(3_1)=\begin{cases}
\Z_{(0)} & \text{if } \tau \in (-2,2) \setminus \{\sqrt{3}, - \sqrt{3}\},\\
0 &\text{if } \tau \in \{\sqrt{3}, -\sqrt{3}\}.
\end{cases}.$$

The figure-eight knot can be handled similarly. According to \cite{porti}, its character variety is \eq X(\G)= \{(x,y) \mid (y-2)(y^2- (x^2-1)y+ x^2-1) =0 \}, \eeq where $\{(y-2)=0\}$ is the component of reducible representations, and $x$ is the trace of a meridian.  

The figure-eight $K=4_1$ satisfies \ref{assumption:dim1}--\ref{assumption:reduced} for the same reasons as the trefoil. One can again check by hand that \ref{assumption:first}--\ref{assumption:last} are satisfied for all $\tau \in (-2,2)$ ( \ref{assumption:alexander} turns out to be vacuous for the figure-eight knot).  We again have $A(K)= \widehat{A}(K)$, and it is shown in \cite{cooper-long} that $A(K)= \l^2 m^4+ \l(-m^8+m^6+ 2m^4+m^2-1)+ m^4$. We conclude from \Cref{theorem:apolylink} that 
$$\HP^*_{\tau}(4_1)= \Z^2_{(0)},$$
for any $\tau \in (-2,2)$.

\subsection{Computations for slim knots} \label{subsection:slimcomputations}  Let $K \sub S^3$ be a slim knot. Thanks to a theorem of Dunfield \cite[Cor.\ 3.2]{dunfield}, we know that the restriction map $r: X(E_K) \to X( \d E_K)$ which appears in the definition of $\widehat{A}(K)$ has degree $1$. This implies that $A(K)= \widehat{A}(K)$. As was noted previously, the $A$-polynomial of a knot in $S^3$ is effectively computable. It follows that the $\widehat{A}$-polynomial of a slim knot is also effectively computable.

In order to relate $P^{\bullet}_{\tau}(K)$ to $\widehat{A}(K)$, we need the character scheme of $K$ to be reduced, i.e. $K$ must satisfy \ref{assumption:reduced}. As we discussed in the paragraph preceding \Cref{corollary:knotpolyrelation}, this is not known in general for slim knots. However, \ref{assumption:reduced} is known for all twist knots (since they are two-bridge knots) and for all $(-2,3,2n+1)$ pretzel knots where $n \in \Z$ \cite{le-tran}. This provides a reasonably large source of examples for which we can compute $\HP^*_{\tau}(K)$ for generic values of $\tau$. 

\ex \label{example:slimknotscomp} Consider the twist knots $K_m$ with $m \neq 0$ full twists and one clasp, as in \cite{mathews}. In the Rolfsen knot table, the first twist knots are  $K_1= 3_1$ (trefoil), $K_{-1}=4_1$ (figure-eight), $K_2=5_2$, $K_{-2}=6_1$ (stevedore), $K_3=7_2$, $K_{-3}=8_1$. Except for the trefoil, they are all hyperbolic and hence slim. Their $A$-polynomials can all be found in the appendix of \cite{planecurves}. One finds that, for generic $\tau$,
$$\HP^*_{\tau}(5_2)= \Z^3_{(0)}, \ \HP^{*}_{\tau}(6_1)= \Z^4_{(0)}, \ \HP^*_{\tau}(7_2)= \Z^5_{(0)},\  \HP^*_{\tau}(8_1)= \Z^6_{(0)}.$$ (We warn the reader that the definition of the $A$-polynomial in \cite{planecurves} includes a factor of $(\l-2)$ corresponding to the component of irreducibles, which shifts the degree by $1$).  \eex

In fact, the $A$-polynomial of all twists knots has been computed explicitly, and is presented in a closed form in \cite{mathews}. One can verify using \cite{mathews} that the $\l$-degree of the $A$-polynomial is always $2$ less than the crossing number, which explains the pattern in the above examples. (We will see in \Cref{example:pretzel} that this pattern is special to twist knots.) Thus, for general twist knots and generic $\tau$, we have
$$\HP^*_{\tau}(K_n)=\begin{cases}
\Z^{2n-1}_{(0)} & \text{if } n >0,\\
\Z^{-2n}_{(0)} &\text{if } n<0.
\end{cases}$$

A different class of examples for which we can explicitly determine $\HP^*_{\tau}(K)$ are certain hyperbolic pretzel knots.  As was mentioned previously, Mattman \cite{mattman} showed that the $(-2,3,m)$ pretzel knots is slim provided that it is hyperbolic (which happens if and only if $m \notin \{1,3,5\}$) and that $m$ is not divisible by $3$. Moreover, Le and Tran showed that the character variety is reduced for all $(-2,3,2n+1)$ pretzel knots where $n \in \Z$ \cite{le-tran}. Finally, in \cite{pretzelpoly} the authors identify a recursion relation which allows one to compute $A(-2,3,2n+1)$ for $n \in \Z$. 

Putting these results together, it follows that one can in principle compute $\HP^*_{\tau}(P(-2,3, 2n+1))$ provided that $n \notin \{0, 1, 2\}$ and that $2n+1$ is not divisible by $3$. We refer the reader to \cite{pretzelpoly} for the precise recursion relation, and limit ourselves to the following example. 

\ex \label{example:pretzel} The pretzel knot $(-2,3,7)$, also known as the Fintushel-Stern knot, has crossing number $12$. As we remarked in \Cref{subsection:dim1prelim}, $(-2, 3,7)$ is not a small knot, since its complement contains an incompressible genus $2$ surface. It is shown in \cite{pretzelpoly} that $A(-2,3,7)= -1+ \l m^8- 2 \l m^{10}+ 2 \l m^{20} + \l^2 m^{22} - \l^4 m^{4} - 2\l^4 m^{42} - \l^5 m^{50} + 2 \l^5 m^{52}- \l^5 m^{54}  + \l^6 m^{62}$. It follows that 
$$\HP^*_{\tau}(P(-2,3,7))= \Z^6_{(0)},$$ for all but finitely many values of $\tau \in (-2,2)$. \eex

\section{Connected sums of knots} \label{section:connectedsums}

In this section, we consider a class of knots whose $\op{SL}(2, \C)$-character variety has $2$-dimensional components. A knot $K$ in this class can be constructed as the connected sum of two knots satisfying \ref{assumption:dim1} and \ref{assumption:reduced} in \Cref{subsection:assumptions}. Under certain assumptions on $\tau \in (-2,2)$ which are satisfied generically, we will show that $\HP^*_{\tau}(K)$ is supported in degrees $-1$ and $0$ and compute it explicitly for some examples. 

\subsection{Topological preliminaries} For the purpose of fixing some notation, we begin by briefly reviewing how to form the connected sum of two knots.  

Let $K_1 \sub Y_1$ and $K_2 \sub Y_2$ be oriented knots, where $Y_1, Y_2$ are homology $3$-spheres. Choose embedded closed balls $\ov{B}_1 \sub Y_1, \ov{B}_2 \sub Y_2$ such that $\ov{B}_i-K_i$ is diffeomorphic to $\{(x,y, z) \mid \|(x,y,z)\| \leq 1, (x,y,z) \neq (x,0,0)\}$. Let $B_i \sub \ov{B}_i$ be the (open) interior. Let $C_i:= \d (Y_i- B_i)$ and observe that $C_i \cap K_i$ is a disjoint union of two points, which are canonically ordered by the orientation on $K_i$. Let $\phi: C_1 \to C_2$ be a diffeomorphism which maps $C_1 \cap K_1$ to $C_2 \cap K_2 $ and preserves the ordering. 

\defi We say that $K_1\# K_2:= (K_1-B_1) \cup_{\phi} (K_2-B_2)$ is the \emph{connected sum} of $K_1$ and $K_2$. Note that $K_1\#K_2$ is a knot in $Y_1 \# Y_2:= (Y_1-B_1) \cup_{\phi} (Y_2 -B_2)$. It can be shown that the connected sum is well-defined up to equivalence of knots, even though the construction depends a priori on many choices. \edefi

A knot in $S^3$ which is the connected sum of two nontrivial knots is often said to be a \emph{composite} knot. Two well-known examples of composite knots are the granny knot and the square knot, which are respectively the connected sum of two trefoils with the same and opposite orientations. 


The fundamental group of $K_1\#K_2$ can be easily computed using van Kampen's theorem. Writing $K:= K_1 \# K_2$ and $Y= Y_1\#Y_2$, we have
\begin{align} \label{equation:fiberconnect} \pi_1(Y-K) &= \pi_1(Y_1-K_1-B_1) *_{\pi_1(S^2-p_1-p_2)} \pi_1(Y_2-K_2-B_2) \\
&= \pi_1(Y_1-K_1) *_{\pi_1(S^2-p_1-p_2)} \pi_1(Y_2-K_2). \nonumber
\end{align}

Here $\{p_1, p_2\} = C_1 \cap K_1 \equiv_{\phi} C_2 \cap K_2$, where the ordering is inherited from the orientation of $K_1, K_2$. The fundamental groups considered in \eqref{equation:fiberconnect} are assumed to be defined with respect to some fixed basepoint $x_0 \in S^2-p_1-p_2$. We do not keep track of this choice in our notation since it plays no role in this section. 

For future reference, we let $\iota_i: \pi_1(Y_i-K_i) \to \pi_1(Y-K)$ be the natural maps associated to the amalgamated product. Note that the fundamental group of $K=K_1 \# K_2$ is independent of the choice of orientations of $K_1$ and $K_2$.

We will rely throughout this section on definitions and notation introduced in \Cref{subsection:relrepgeneralities}. However, we deviate from the notation of the previous sections in one important way: namely, given a knot $K \sub Y$, we will let $\mathscr{R}(Y-K)$ and $\mathscr{X}(Y-K)$ denote the representation and character schemes of $\pi_1(Y-K)$ with respect to an unspecified basepoint $x_0$. We follow the analogous notational convention for the representation and character varieties, and for their relative counterparts. 

There is an inconsistency in the fact that we are now considering representation and character varieties of knot complements: in Sections \ref{section:constantdim1} and \Cref{section:1dcomputations}, we always considered representation and character varieties associated to the blowup $E_K$ of a doubly-pointed pointed knot $(Y, K, p,q)$. The reason for this change is that it allows us to avoid considering blowups of connected sums. Of course, the fundamental group of a blowup is isomorphic to that of the knot complement, so one can always pass between these choices (in a non-canonical way).

\subsection{Some assumptions} We will now restrict our attention to knots in integral homology spheres satisfying some additional assumptions. As in \Cref{subsection:assumptions}, it will be convenient to label these assumptions separately. 

Let $Y$ be an integral homology sphere and let $K \sub Y$ be a knot. Let $\tau \in (-2,2)$ be a real parameter. We consider the following assumptions on this data. 

\begin{customassumption}{C.1} \label{assumption:smoothchar} The irreducible locus of the relative character scheme $\mathscr{X}^{\tau}_{irr}(Y-K)$ is smooth. \end{customassumption}

\begin{customassumption}{C.2} \label{assumption:0dimchar} The irreducible locus of the relative character scheme $\mathscr{X}^{\tau}_{irr}(Y-K)$ is zero-dimensional. \end{customassumption}

\begin{customassumption}{C.3} \label{assumption:repeatalexander} If $x \in [0,1]$ has the property that $e^{4 \pi i x}$ is a root of the Alexander polynomial of $K \sub Y$, then $\tau \neq 2 \cos (2 \pi x)$. \end{customassumption}

Note that \ref{assumption:smoothchar}--\ref{assumption:repeatalexander} depend both on the topological data $K \sub Y$ and on $\tau \in (-2,2)$. Moreover \ref{assumption:smoothchar}--\ref{assumption:repeatalexander} are closely related to the assumptions introduced in \Cref{subsection:assumptions}. In fact, \ref{assumption:repeatalexander} is essentially identical to \ref{assumption:alexander}, while we will show in \Cref{subsection:connectedcomp} that the knot $K \sub Y$ satisfies \ref{assumption:smoothchar}--\ref{assumption:repeatalexander} for all but finitely many $\tau \in (-2,2)$ provided that it satisfies \ref{assumption:dim1} and \ref{assumption:reduced}.

It will be useful to introduce the following definition. 

\defi 
	Given a finitely-presented group $\G$, a representation $\G \to \op{SL}(2,\C)$ is said to be \emph{diagonal} if it can be conjugated to have image consisting only of diagonal matrices. Diagonal representations are in particular reducible.
\edefi 

Let us now state an important proposition which will be used throughout this section.

\prop \label{proposition:types} For $i=1,2$ let $K_i \sub Y_i$ be a knot in an integral homology sphere. Let $K=K_1 \# K_2 \subset Y=Y_1 \# Y_2$ and suppose that $\tau \in (-2,2)$ satisfies \ref{assumption:repeatalexander} with respect to $K_1 \sub Y_1$ and $K_2 \sub Y_2$. Then any irreducible representation of $\pi_1(Y-K)$ into $\op{SL}(2,\C)$ must pull back to an irreducible representation along $\iota_1$ or $\iota_2$ (or both).   \eprop

This proposition justifies the following definition. 

\defi \label{definition:types}
	For $i=1,2$, let $K_i \sub Y_i$ be as in \Cref{definition:types}. The representations $\pi_1(Y-K) \to \op{SL}(2,\C)$ which pull back to an irreducible representation along $\iota_i$ and to a reducible representation along $\iota_j$ for $i \neq j$ with $i,j \in \{1,2\}$ are said to be of Type I. The representations which pull back to an irreducible representation along both factors are said to be of Type II.  
\edefi

\pf[Proof of \Cref{proposition:types}] We need to check that $\rho$ is reducible if $\rho \circ \iota_i$ is reducible for $i=1,2$. To this end, it is useful to note that if $\rho \circ \iota_i$ is reducible, then it is abelian. This is a consequence of \cite[Prop.\ 6.1]{planecurves} and the fact that $\tau \in (-2,2)$ satisfies \ref{assumption:repeatalexander} for $K_i \sub Y_i$. It then follows from the classification of $\op{SL}(2,\C)$ representations in \cite[Sec.\ 2.1]{abou-man} that $\rho \circ \iota_i$ is in fact diagonal since $\tau \neq \pm 2$. It is therefore enough to check that the fact that $\rho \circ \iota_i$ are diagonal for $i=1,2$ implies that $\rho$ is diagonal. 

Let us then suppose that $x \in \pi_1(S^2-p_1-p_2)$ is a generator. Viewing $x$ as an element of both $\pi_1(Y_1-K_1)$ and $\pi_1(Y_2-K_2)$, note that $\rho \circ \iota_1(x)= \rho \circ \iota_2(x)$.  If we assume that $\rho \circ \iota_1$ is diagonal, then there is an element $g \in \op{SL}(2,\C)$ such that $g \rho \iota_1(x) g^{-1} = \op{diag}(\l, \l^{-1})$ for some $\l \in \C^*$, where $\l \neq \pm 1$. It now follows from \Cref{lemma:diag} below that in fact $\rho \circ \iota_1$ and $\rho \circ \iota_2$ are in fact simultaneously diagonalized by $g$.  Since the image of the $\iota_i$ generates $\pi_1(Y-K)$, we conclude that $g$ diagonalizes $\rho$. \epf

\lem \label{lemma:diag} Let $\G$ be a finitely-generated group and let $\s: \G \to \op{SL}(2,\C)$ be a diagonal representation. Suppose that there exists an element $h \in \G$ with $\s(h) = \op{diag}(\l, \l^{-1})$ for $\l \neq \pm 1$. Then $\op{Im} \s \sub \{ \op{diag}(z, z^{-1}) \mid z \in \C^* \}$, i.e. $\s$ is diagonal. \elem
\pf By hypothesis, there exists $M \in \op{SL}(2,\C)$ such that $M \s M^{-1}$ is diagonal. In particular, this means that $M \s(h) M^{-1}=M \op{diag}(\l, \l^{-1}) M^{-1}$ is diagonal.  One can check that this implies that either $M=D$ for $D \in  \{ \op{diag}(\l, \l^{-1}) \mid \l \in \C^* \}$ or $M = J D$ for $J= \begin{pmatrix} 0& -1 \\
1& 0
\end{pmatrix}$ and $D \in \{ \op{diag}(\l, \l^{-1}) \mid \l \in \C^* \}$. 

The conclusion now follows from the fact -- which is also straightforward to verify -- that conjugation by $D$ or $JD$ cannot send a non-diagonal matrix to a diagonal matrix. \epf

\subsection{Analysis of the character variety}   According to \Cref{lemma:fiberrep}, the isomorphism \eqref{equation:fiberconnect} gives rise to a fiber product of relative representation schemes \eq \label{equation:fiberproductsum} \mathscr{R}^{\tau}(Y-K)= \mathscr{R}^{\tau}(Y_1-K_1) \tms_{\mathscr{R}^{\tau}(S^2-p_1-p_2)} \mathscr{R}^{\tau}(Y_2-K_2). \eeq 

Let $\pi_i: \mathscr{R}^{\tau}(Y_i-K_i) \to \mathscr{R}^{\tau}(S^2-p_1-p_2)$ be the maps associated to the fiber product. Observe that the $\pi_i$ are compatible with the usual conjugation action of the algebraic group $\op{SL}_2$. 

By a slight abuse of notation, we also let $\pi_i: R^{\tau}(Y_i - K_i) \to R^{\tau}(S^2-p_1-p_2)$ be the corresponding maps on character varieties. Observe that $\pi_i$ maps a representation $\rho$ to its restriction to $\pi_1(S^2-p_1-p_2)$. The conjugation action of $\op{SL}(2,\C)$ is compatible with $\pi_i$. Since $\{ \pm \op{Id}\}$ acts trivially, it descends to an action of $\op{PSL}(2,\C)$. 

Given $\rho : \pi_1(Y_i-K_i) \to \op{SL}(2,\C)$ for some $i=1,2$, let $\mathscr{R}^{\tau}(Y_i-K_i)_{\rho}$ be the connected component of $\rho$ in $\mathscr{R}^{\tau}(Y_i-K_i)$ and let ${\pi_i}|_{\rho}$ be the restriction of $\pi_i$ to $\mathscr{R}^{\tau}(Y_i-K_i)_{\rho}$.

The next two propositions give a more precise description of ${\pi_i}|_{\rho}$ in the case where $\rho$ is reducible and irreducible respectively. Eventually, we wish to consider a certain fiber product over $\mathscr{R}^{\tau}(S^2-p_1-p_2)$ and to show that it forms a smooth scheme; see \Cref{proposition:schemesmoothsum}. In ordinary differential topology, one often verifies that a fiber product is smooth by showing that the maps from each factor onto the base are submersions. The scheme theoretic analog of a submersion is a \emph{smooth morphism}. We will need to make use of this notion which unfortunately requires a significant amount of scheme theory to define; see \cite[III.\ Chap.\ 10]{hartshorne}. However, we will mostly use it as a black box by appealing to general results which allow one to promote ordinary submersions on closed points to smooth morphisms of schemes.

\prop \label{proposition:reduciblecomponents} For $i =1,2$, suppose that $K_i \sub Y_i$ and $\tau \in (-2,2)$ satisfy \ref{assumption:repeatalexander}.  If $\rho \in R^{\tau}(Y_i-K_i)$ is a reducible representation, then ${\pi_i}|_{\rho}: \mathscr{R}^{\tau}(Y_i-K_i)_{\rho} \to \mathscr{R}^{\tau}(S^2-p_1-p_2)$ is an isomorphism. Moreover, $\mathscr{R}^{\tau}(Y_i-K_i)_{\rho}$ is the unique component of $\mathscr{R}^{\tau}(Y_i-K_i)$ containing a reducible representation. \eprop

\pf We saw in the proof of \Cref{proposition:types} that all reducible $\op{SL}(2,\C)$ representations of $\pi_1(Y_i-K_i)$ are in fact abelian. Since any abelian representation factors through $H_1(Y_i-K_1)= \Z$, this implies that all such representations are conjugate. Hence they belong to the same component of $\mathscr{R}^{\tau}(Y_i-K_i)$. 

One can check by hand that 
$$\mathscr{R}^{\tau}(S^2-p_1-p_2)= \mathscr{R}^{\tau}(\Z) \simeq \op{PSL}(2,\C)/ \C^* \simeq TS^2$$ is smooth as a scheme provided that $\tau \neq \pm 2$. Since $\mathscr{R}^{\tau}(Y_i-K_i)_{\rho}$ is a component of abelian representations, one can show that $\mathscr{R}^{\tau}(Y_i-K_i)= \mathscr{R}^{\tau}(\Z)$ which implies that $\mathscr{R}^{\tau}(Y_i-K_i)$ is also smooth. 

We previously observed that the map of representation varieties ${\pi_i}|_{\rho}: R^{\tau}(Y_i-K_i)_{\rho} \to R^{\tau}(S^2-p_1-p_2)$ is compatible with the conjugation action of $\op{PSL}(2,\C)$. This action is evidently transitive on $R^{\tau}(S^2-p_1-p_2)$, from which it follows that ${\pi_j}|_{\rho}$ is surjective at the level of closed points and of tangent spaces. This can be shown to imply (see \cite[III, Prop.\ 10.4]{hartshorne}) that $\pi_i$ is a smooth morphism of schemes. \epf

\prop \label{proposition:irreduciblesmooth} For $i=1,2$, suppose that $K_i \sub Y_i$ and $\tau \in (-2,2)$ satisfy \ref{assumption:smoothchar} and \ref{assumption:repeatalexander}.   If $\rho \in R^{\tau}(Y_i-K_i)$ is irreducible, then ${\pi_i}|_{\rho}: \mathscr{R}^{\tau}(Y_i-K_i)_{\rho} \to \mathscr{R}^{\tau}(S^2-p_1-p_2)$ is a smooth morphism of schemes. \eprop

\pf It follows by combining \ref{assumption:smoothchar} with \Cref{proposition:regularitycriterion} that $\mathscr{R}^{\tau}(Y_i-K_i)_{\rho}$ is a smooth scheme. We saw in the proof of the previous proposition that $\mathscr{R}^{\tau}(S^2-p_1-p_2)$ is a smooth scheme. We again observe that the map ${\pi_i}|_{\rho}: R^{\tau}(Y_i-K_i)_{\rho} \to R^{\tau}(S^2-p_1-p_2)$ is compatible with the conjugation action of $\op{PSL}(2,\C)$, and that ${\pi_j}|_{\rho}$ is surjective at the level of closed points and of tangent spaces. Using again \cite[III, Prop.\ 10.4]{hartshorne}, this implies that ${\pi_i}|_{\rho}$ is a smooth morphism of schemes. \epf

\prop \label{proposition:schemesmoothsum} For $i =1,2$, suppose that $K_i \sub Y_i$ and $\tau \in (-2,2)$ satisfy \ref{assumption:smoothchar} and \ref{assumption:repeatalexander}.  Then $\mathscr{X}^{\tau}_{irr}(K_1 \# K_2)$ is smooth as a scheme. Moreover, each connected component consists exclusively or Type I or Type II representations.  \eprop

\pf It follows by combining \Cref{proposition:types} and Equation \eqref{equation:fiberproductsum} that each connected component of $R^{\tau}_{irr}(K_1 \#K_2)$ consists exclusively of Type I or Type II representations. It follows that the same is true for the connected components of $X^{\tau}_{irr}(K_1\# K_2)$, which is the quotient of $R^{\tau}_{irr}(K_1 \# K_2)$ by a free $\op{PSL}(2,\C)$ action. 

We saw in Propositions \ref{proposition:reduciblecomponents} and \ref{proposition:irreduciblesmooth} that $\pi_i$ restricts to a smooth morphism of schemes on $\mathscr{R}^{\tau}(Y_i-K_i)$. It is a general fact that the property of being a smooth morphism is closed under fiber products and composition; see \cite[III, Prop.\ 10.1]{hartshorne}. Appealing again to \eqref{equation:fiberproductsum}, this implies in particular that the composition $\mathscr{R}^{\tau}_{irr}(Y-K) \to \mathscr{R}^{\tau}(S^2-p_1-p_2) \to \op{Spec} \C$ is a smooth morphism of schemes, which means that $\mathscr{R}^{\tau}_{irr}(Y-K)$ is a smooth scheme. Appealing to \Cref{proposition:regularitycriterion}, this implies that $\mathscr{X}^{\tau}_{irr}(Y-K)$ is also a smooth scheme.  \epf

We record the following lemma, which follows immediately from \eqref{equation:fiberproductsum}. 

\lem \label{lemma:type1characterize} Assuming that $K_i \sub Y_i$ and $\tau \in (-2,2)$ satisfy \ref{assumption:smoothchar} and \ref{assumption:repeatalexander} for $i=1,2$, the locus of Type I components of $X^{\tau}_{irr}(K_1 \# K_2)$ is topologically the disjoint union of $X^{\tau}_{irr}(K_1)$ and $X^{\tau}_{irr}(K_2)$.    \elem

We obtain the following corollary by combining \Cref{proposition:schemesmoothsum} with \Cref{proposition:localsystem}. 

\cor \label{corollary:localsytemcomp} Under the hypotheses of \Cref{proposition:schemesmoothsum}, the perverse sheaf $P^{\bullet}_{\tau}(K)$ is a local system. Its stalks over every k-dimensional component are isomorphic to $\Z$, supported in degree $-k$. \ecor

We now specialize to knots which satisfy \ref{assumption:0dimchar} in addition to \ref{assumption:smoothchar} and \ref{assumption:repeatalexander}. Given $\tau \in (-2,2)$, if $K_i \sub Y_i$ is a knot in an integral homology sphere which satisfies \ref{assumption:0dimchar}, it makes sense to let $m_i \in \bb{N}$ be the number of components of $X_{irr}^{\tau}(Y_i-K_i)$.  We now have the useful following proposition.  

\prop \label{} For $i =1,2$, suppose that $K_i \sub Y_i$ and $\tau \in (-2,2)$ satisfy \ref{assumption:smoothchar}--\ref{assumption:repeatalexander}. If $\rho \in R^{\tau}(Y_i-K_i)$ is irreducible, then $R^{\tau}(Y_i-K_i)_{\rho}$ is a $\C^*$-bundle over $R^{\tau}(S^2-p_1-p_2) \simeq \op{PSL}(2,\C)/ \C^*$. \eprop

\pf We showed in \Cref{proposition:schemesmoothsum} that ${\pi_i}|_{\rho}: \mathscr{R}^{\tau}(Y_i-K_i)_{\rho} \to \mathscr{R}^{\tau}(S^2-p_1-p_2)$ is a smooth morphism of schemes if $\rho \in R^{\tau}(Y_i-K_i)$ is irreducible. The discussion preceding the proof of \Cref{proposition:tangentspace} (2) also implies that $R^{\tau}(Y_i-K_i)_{\rho}$ is a $\op{PSL}(2,\C)$-bundle over the point $[\rho] \in X^{\tau}_{irr}(Y_i-K_i)$. 

Let us now show that ${\pi_i}|_{\rho}: R^{\tau}(Y_i-K_i)_{\rho} \to R^{\tau}(S^2-p_1-p_2)$ is a $\C^*$-bundle. Given any $\s \in  R^{\tau}(Y_i-K_i)_{\rho}$, we will show that the fiber over ${\pi_i}|_{\rho}(\s)$ is a copy of $\C^*$. Let $\mu \in \pi_1(S^2-p_1-p_2)$ be a generator. After possibly conjugating by an element of $\op{SL}(2,\C)$, we may as well assume that $\s(\mu)$ is diagonal. Since $\op{SL}(2,\C)$ acts transitively on $R^{\tau}(Y_i-K_i)_{\rho}$, the fiber over ${\pi_i}|_{\rho}(\s)$ can be identified with the orbit of $\s$ under the action of $\op{Stab}(\s(\mu))$. But since $\s(\mu)$ is diagonal and $\op{Tr} \s(\mu) \neq \pm 2$, one can check that $\op{Stab}(\s(\mu))= \{ \op{diag}(\l, \l^{-1}) \mid \l \in \C^*\}$. Finally, since the kernel of the $\op{SL}(2,\C)$-action is precisely $\{\pm \op{Id}\}$, it follows that the orbit of $\s$ under $\op{Stab}(\s(\mu))$ is $\C^* / \{\pm \op{Id}\}= \C^*$. \epf

If we continue to assume that $K_i \sub Y_i$ and $\tau \in (-2,2)$ satisfy \ref{assumption:smoothchar}--\ref{assumption:repeatalexander}, we can precisely describe $X^{\tau}_{irr}(Y-K) \sub X^{\tau}(Y-K)$.   Indeed, \eqref{equation:fiberproductsum} implies that we can think of a representation $\rho: \pi_1(Y-K) \to \op{SL}(2,\C)$ as a pair $(\rho_1, \rho_2)$ where $\rho_i: \pi_1(Y_i-K_i) \to \op{SL}(2,\C)$ and where the $\rho_i$ restrict to the same representation on $\pi_1(S^2-p_1-p_2)$. One then counts that there are precisely $m_1$ Type I components of $\mathscr{R}^{\tau}(Y-K)$ which restrict to an abelian representation on the first factor, and $m_2$ Type I components which restrict to an abelian representation on the second factor. Observe that each such component is topologically a copy of $\op{PSL}(2,\C)$. Recalling that $\op{PSL}(2,\C)$ acts transitively on $R^{\tau}_{irr}(Y-K)$ by conjugation, we find that the locus of Type I components of $X^{\tau}_{irr}(Y-K)$ is a disjoint union of $m_1+m_2$ isolated points. 

Similarly, one counts $m_1m_2$ Type II components of $R^{\tau}_{irr}(Y-K)$. These form $\C^* \tms \C^*$ bundles over ${R}^{\tau}(S^2-p_1-p_2) \simeq \op{PSL}(2,\C)/\C^*$.  The $\op{PSL}(2,\C)$ conjugation action is free and transitive and one finds that the image of each component in the character variety is a copy of $\C^*$. We summarize the previous discussion with the following corollary, which also relies on \Cref{proposition:schemesmoothsum} and \Cref{corollary:localsytemcomp}.

\cor \label{corollary:connectedsumdescription} Suppose that $K_i \sub Y_i$ and $\tau \in (-2,2)$ satisfy \ref{assumption:smoothchar}--\ref{assumption:repeatalexander} for $i=1,2$, and let $m_i \in \mathbb{N}$ be defined as above. Then the irreducible locus $\mathscr{X}^{\tau}_{irr}(Y-K)$ is a smooth scheme. Topologically, it is a disjoint union of $m_1+m_2$ points corresponding to Type I representations, and $m_1m_2$ copies of $\C^*$ corresponding to Type II representations. The restriction of the perverse sheaf $P^{\bullet}_{\tau}(K)$ to any $k$-dimensional component is a local system whose stalks are isomorphic to $\Z$, in degree $-k$. \ecor

\subsection{Local systems}  \label{subsection:localsystems} We now wish to strengthen \Cref{corollary:connectedsumdescription} by showing that the local system $P^{\bullet}_{\tau}(K)$ is in fact trivial.  It is evidently enough to consider the Type II components of $X^{\tau}_{irr}(Y-K)$ which are diffeomorphic to $\C^*$. Let $\mc{C}$ be such a component. 

Recall from \Cref{subsection:preliminarydefs} that a Heegaard splitting $\mc{H}= (\S, U_0, U_1)$ induces inclusions $X^{\tau}_{irr}(U_i) \to X^{\tau}_{irr}(\S)$. These inclusions are Lagrangian with respect to the natural symplectic form on $X^{\tau}_{irr}(\S)$ and we therefore write $X^{\tau}_{irr}(U_i):= L_i$. We showed in \Cref{subsection:spinstructures} that the $L_i$ carry unique spin structures provided that the genus of $\S$ is at least $6$, which we can always assume. We can then identify $\mc{C}$ with a component of $L_0 \cap L_1$. 

In order to show that $P^{\bullet}_{\tau}(K)$ is a trivial local system on $\mc{C}$, we will appeal to Lemma 6.3 of \cite{abou-man} and to the discussion preceding this lemma. The main objects under consideration are a certain local system $|W^+|$ on $\mc{C}$ and a map \eq \label{equation:spinmap} TL_0|_{\mc{C}} \to TL_1|_{\mc{C}}. \eeq Here \eqref{equation:spinmap} corresponds to (29) in \cite{abou-man}, which is described in the paragraphs preceding the statement of Lemma 6.3 in \cite{abou-man}.

In our particular setting, Lemma 6.3 of \cite{abou-man} says that $$P^{\bullet}_{\tau}(K)|_{\mc{C}} \simeq |W^+|[1],$$ provided that the map \eqref{equation:spinmap} preserves spin structures. Our first task in the remainder of this section is to show that \eqref{equation:spinmap} does indeed preserve spin structures. Then, we will describe $|W^+|$ more carefully and argue that it is a trivial local system.

We begin by introducing a Heegaard splitting which is well adapted to our setting. Unlike in \Cref{section:definitionofinvariant}, we will consider Heegaard splittings for \emph{knot complements} rather than knot exteriors. Since these have isomorphic homotopy groups, the distinction is immaterial for our present purposes -- indeed, we only needed to emphasize this distinction in \Cref{section:definitionofinvariant} for the purpose of proving the naturality of our invariant.

Let $(\Sigma, p, q, U_0, U_1)$ be a Heegaard splitting of genus at least $3$ for $Y_1 - K_1$ where $p, q \in K$ are marked points on $\S$. Similarly, let $(\Sigma', p', q', U_0', U_1')$ be a Heegaard splitting for $Y_2 - K_2$ of genus at least $3$. 

We get a Heegaard splitting (of genus at least $6$) for $ (Y_1 \# Y_2) - (K \# K')= Y - K = $ by identifying disks around $q$ and $p'$: 
$$ (\Sigma \# \Sigma', p, q', U_0 \#_b U_0', U_1 \#_b U_1'),$$
where $\#_b$ denotes the boundary connected sum.

Let $G= \op{SL}(2,\C)$ and let $\fk{C}^{\tau} \sub G$ be the conjugacy class of elements of trace $\tau$. The relative character variety of the Heegaard surface used for $K_1$ can be described as
\begin{align*}
 X^{\tau}(\S) &= \{(A_1, \dots, A_g, B_1, \dots, B_g, C_1, C_2) \in G^{2g+2}\mid C_1, C_2 \in \fk{C}^{\tau}, \prod [A_i, B_i] C_1 C_2 = 1 \}/G \\
 &= \{(A_1, \dots, A_g, B_1, \dots, B_g, C_1) \in G^{2g+1}\mid C_1 \in \fk{C}^{\tau}, \prod [A_i, B_i] C_1 = \op{diag}(t, t^{-1}) \}/\C^*,
 \end{align*}
where $t + t^{-1} = \tau$. We have a similar description for $K_2$:
$$  X^{\tau}(\S') = \{(A_1', \dots, A_h', B_1', \dots, B_h', C_2') \in G^{2h+1}, \mid C_2' \in \fk{C}^{\tau}, \prod [A_i', B_i'] C_2' = \op{diag}(t, t^{-1}) \}/\C^*.$$

For the connected sum, we have
\begin{multline*}
 X^{\tau}(\S \# \S') = \{(A_1,\dots, A_g, B_1, \dots,  B_g, C_1, A_1', \dots, A_h', B_1', \dots, B_h', C_2') \in G^{2g+2h+2} \mid C_1, C_2' \in \fk{C}^{\tau}, \\
 \prod [A_i, B_i] C_1 = \prod [A_i', B_i'] C_2' \}/G 
\end{multline*}

Note that $X^{\tau}(\S) \times X^{\tau}(\S')$ lives inside $X^{\tau}(\S \# \S')$ as a complex codimension one subset, given by asking that the holonomy around the curve where we do the connect sum, $\prod [A_i, B_i] C_1 = \prod [A_i', B_i'] C_2'$, has trace $\tau$. 

Consider a neighborhood $V$ of this subset where the same holonomy has trace contained in the interval $(\tau-\e, \tau+\e) \sub (-2,2)$, for some $\e>0$ suitably small. There, we can assume
\eq \label{equation:conjugationact} \prod [A_i, B_i] C_1 = \prod [A_i', B_i'] C_2' = \op{diag}(u, u^{-1}),\eeq
and divide by the residual action of $\C^*$ instead of $G$. Note that \eqref{equation:conjugationact} involves making a choice, since we 
could also have conjugated the above expression to $\op{diag}(u^{-1},u)$. The $\C^*$ action depends on this choice. However, if $\e$ is small enough, then this choice can be made consistently and one can then verify that the resulting $\C^*$ action is holomorphic; cf.\; \cite{goldman}. 

Furthermore, there is a $\C^*$ action on $V$ given by simultaneous conjugation on $A_1, \dots, A_g,$ $B_1, \dots, B_g$, $C_1$, and leaving $A_1', \dots, A_h', B_1', \dots, B_h', C_2'$ unchanged.  Let us call this action $\varphi$. Note that $\varphi$ preserves the Lagrangians $L_0, L_1$, but the action $\varphi$ on the $L_i$ is not free. However, we will exhibit open subsets $L_i^0 \sub L_i$ on which the action is free, and such that $L_0^0 \cap L_1^0$ is precisely the sub-locus of $L_0 \cap L_1$ consisting of Type II representations. 

Without loss of generality, by a suitable choice of the generators for $\pi_1$ of the surfaces under consideration, we can assume that $L_0$ is given by the equations
$$B_1=\dots=B_g=B_1'=\dots=B_h'=I.$$  
Then, the closure of $L_0$ inside $X^\tau(\Sigma)$ is the set
\begin{align*} \ov{L}_0&= \{(A_1,\dots,A_g,A_1',\dots,A_h', C_1, C_2') \in G^{2g+2} \mid C_1=C_2' \in \fk{C}^{\tau} \}/G \\
&= \{(A_1,\dots,A_g,A_1',\dots,A_h', C) \in G^{2g+1} \mid C= \op{diag}(u, u^{-1}) \} / \C^* \\
&= \bigl(\{(A_1,\dots,A_g,A_1',\dots,A_h') \in G^{2g}\}/ \C^*\bigr) \tms \op{diag}(u, u^{-1}),
\end{align*}  where $u+u^{-1}= \tau$ and the $\C^*$ action is by conjugation of the $\{A_i, A_i'\}_i$.  

Consider now the restriction of $\varphi$ to $\ov{L}_0$, which conjugates the $A_i$ while keeping the $A_i'$ fixed. Since both $\C^*$ actions commute, we have 
\begin{align*} \ov{L}_0 / \C^* &= \bigl( (\{(A_1,\dots,A_g,A_1',\dots,A_g') \in G^{2g}\}/ \C^*) \tms \op{diag}(u, u^{-1}) \} \bigr)/\C^* \\
&=  \bigl\{(A_1,\dots,A_g) \in G^g \bigr\}/ \C^* \tms \bigl\{(A_1',\dots,A_g') \in G^g\bigr\}/ \C^* \tms \op{diag}(u, u^{-1})\\
&=\bigl \{(A_1,\dots,A_g) \in G^g \bigr\}/ \C^*  \tms \bigl\{(A_1',\dots,A_g') \in G^g \bigr\}/ \C^*.\end{align*}

Let $X^1_{irr} \sub \{(A_1,\dots,A_g) \in G^g\}/ \C^* $ be the open locus of representations such that $(A_1,\dots,A_g,$ $ \op{diag}(u,u^{-1}))$ is irreducible and let $X^2_{irr}\sub \{ (A_1',\dots,A_g') \in G^g \} / \C^*$ be defined analogously. Let $L_0^0$ be the preimage of $X^1_{irr} \tms X^2_{irr}$ under the quotient map $\ov{L}_0 \to \ov{L}_0/\C^*$. 

\lem \label{lemma:codimensionspin} The inclusion $L_0^0 \sub L_0$ has codimension at least $4$. \elem

\pf Let $R_1 \sub \ov{L}_0$ be the set of representations such that $(A_1,\dots,A_g, \op{diag}(u, u^{-1}))$ is reducible and such that $(A_1',\dots,A_g', \op{diag}(u, u^{-1}))$ is irreducible. Let $R_2 \sub \ov{L}$ be defined analogously. Finally, let $R_{12}$ be the set of representations such that $(A_1,\dots,A_g,  \op{diag}(u, u^{-1}))$ and $(A_1',\dots,A_g', \op{diag}(u, u^{-1}))$ are both reducible. 

By an argument as in \cite[Lem.\ 2.6]{abou-man}, we see that $R_1$ has dimension at most $(2g+1)+3g=5g+1$. Similarly, $R_2$ has dimension at most $5g+1$, while $R_{12}$ has dimension at most $(2g+1)+(2g+1)= 4g+2$. Since $L_0$ has dimension $6g-1$, we conclude that the codimension is at least $g-2$. Since we are working with a Heegaard splitting of genus at least $6$, the lemma follows. \epf

\lem \label{lemma:uniquespinproduct} The manifold $X_{irr}^1 \tms X^2_{irr} \simeq L_0^0/\C^*$ admits a unique spin structure. \elem

\pf Observe that the $X^i_{irr}$ coincide with the varieties $X^{\tau}_{irr}(U_i)$ considered in \Cref{subsection:spinstructures} (note that the $U_i$ occurring in \Cref{subsection:spinstructures} are different from the $U_i$ considered in this section). Having made this observation, the existence of spin structure follows from \Cref{proposition: spinstructures}. The uniqueness follows from the proof of \Cref{proposition: spinstructures}, which shows that the $X^i_{irr}$ are simply-connected.  \epf

The above discussion goes through for $L_1$ in place of $L_0$. Note that, if we assume that $L_0$ is given by setting $B_i$ and $B_i'$ to the identity, we do not have a simple expression for $L_1$ in terms of the chosen matrices. However, by choosing different sets of generators for fundamental groups, we can put $L_1$ in the same form as we did for $L_0$ (at the expense of putting $L_0$ in a complicated form). The actions of $\C^*$ are independent of these choices of generators, so we can construct a subset $L_1^0 \subset L_1$ by the same procedure as above. 

Examining the definition of $L_i^0 \sub L_i$, we see that $L_0^0 \cap L_1^0 \sub L_0 \cap L_1$ is precisely the sub-locus of Type II representations. Hence $\mc{C}$ is a connected component of $L_0^0 \cap L_1^0$ and the action preserves $\mc{C}$ since it is continuous. 

\rmk \label{remark:c2notneeded} Although we have been assuming throughout \Cref{subsection:localsystems} that $\tau \in(-2,2)$ and $K_i \sub Y_i$ satisfy \ref{assumption:smoothchar}--\ref{assumption:repeatalexander}, we did not use \ref{assumption:0dimchar} in constructing the $\C^*$ action $\varphi$. \ermk

The next proposition relies crucially on the fact that the knots $K_i \sub Y_i$ satisfy \ref{assumption:0dimchar}.  

\prop \label{proposition:transitive} The action of $\varphi$ on $\mc{C}$ is transitive. \eprop

\pf It will be helpful to consider an alternative description of $\varphi$ in the spirit of \Cref{corollary:connectedsumdescription}.  Recall that $\mc{C}$ is the quotient of a Type II component $\tilde{\mc{C}} \sub R^{\tau}_{irr}(K-Y)$ by a transitive $\op{SL}(2,\C)$-action. Now $\tilde{\mc{C}}$ is a $\C^* \tms \C^*$ bundle over $R^{\tau}(S^2-q-p') \simeq \op{SL}(2,\C)/ \C^*$. Let $m \in \pi_1(S^2-q-p')$ be a generator and let $\s \in R^{\tau}(S^2-q-p')$ be the unique representation with the property that $\s(\mu)=\op{diag}(u, u^{-1})$. 

The elements of $\tilde{\mc{C}}$ are pairs $(\rho_1, \rho_2)$ of irreducible representations of $\pi_1(Y_1-K_1)$ and $\pi_2(Y_2-K_2)$ respectively, which agree on the generator $\mu \in \pi_1(S^2-q-p')$. By conjugating, we can ensure that $\rho(\mu)=\rho'(\mu)= \s(\mu)= \op{diag}(u, u^{-1})$. This has the effect of identifying all of the fibers with the fiber $F_{\s}$ over $\s \in R^{\tau}(S^2-q-p')$. This is precisely what we did in \eqref{equation:conjugationact}. 

The stabilizer of $ \op{diag}(u, u^{-1})$ is isomorphic to $\C^*$, and the quotient of $F_{\s}$ by the residual $\C^*$ action is then precisely $\mc{C} \sub X^{\tau}(Y-K)$.  However, we can also acts by conjugation on either factor of $F_{\s}= \C^* \tms \C^*$ while leaving the other factor fixed. One of these actions is $\varphi$, while the other corresponds to acting by simultaneous conjugation on  $A_1', \dots, A_h', B_1, \dots, B_h', C_2'$ while leaving $A_1, \dots, A_g, B_1, \dots, B_g, C_1$ unchanged. It follows from this description that $\varphi$ acts transitively on $\mc{C}= F_{\s}/\C^*$.   \epf 

Putting together the previous results, we arrive at the following corollary. 

\cor \label{corollary:finalspin} The map \eqref{equation:spinmap} preserves spin structures.  \ecor
\pf Consider the projections $\pi_i: L_i^0 \to L_i^0 /\C^*$. We note that $ TL_i^0= \pi_i^*(L_i^0/\C^*) \oplus \R^2$, where $\R^2$ is viewed as the Lie algebra of $\C^*$. According to \Cref{lemma:uniquespinproduct} applied to both $L_0^0$ and $L_1^0$, the quotients $L_i^0 / \C^*$ admit a unique spin structure which we call $s_i$ for concreteness. We can then endow $TL_i^0$ with a spin structure $S_i$, which is the direct sum of $\pi_i^*(s_i)$ and the trivial spin structure on the trivial $\R^2$-bundle.  

According to \Cref{lemma:codimensionspin} applied to $L_0$ and $L_1$, the inclusion $L_i^0 \sub L_i$ has complex codimension $g-2$. Since $\S$ has genus at least $6$, the inclusion induces an isomorphism on fundamental groups. It follows that the $L_i^0$ are simply-connected since the $L_i$ are simply-connected. Hence the spin structure $S_i$ coincides with the restriction of the spin structure coming from $L_i$. In particular, it is enough to prove that \eqref{equation:spinmap} takes ${S_0}|_{\mc{C}}$ to ${S_1}|_{\mc{C}}$. 

According to \Cref{proposition:transitive}, the $\C^*$ action is transitive on $\mc{C} \sub L_0^0 \cup L_1^0$. This means that \eqref{equation:spinmap} is the pullback of a map 
\eq \label{equation:spinpoint} (TL_0^0)_{[\rho]} \to (TL_1^0)_{[\rho]}, \eeq  where $[\rho] \in L_0^0/\C^* \cap L_1^0 / \C^*$. But \eqref{equation:spinpoint} necessarily maps ${s_0}_{[\rho]} \to {s_1}_{[\rho]}$ since spin structures are unique over a point. By definition of the $S_i$, it follows that \eqref{equation:spinmap} sends ${S_0}|_{\mc{C}}$ to ${S_1}|_{\mc{C}}$ since both \eqref{equation:spinmap} and the $S_i$ are obtained from a pullback.  \epf

Having shown that \eqref{equation:spinmap} preserves spin structures, it remains to argue that $|W^+|$ is a trivial local system. To prove this, we will appeal to similar considerations as in the proof of \Cref{corollary:finalspin} and that of Lemma 8.3 in \cite{abou-man}.

For $i=1,2$, let $N_i\mc{C} \sub TL_i^0|_{\mc{C}}$ be the normal bundle associated to the inclusion $\mc{C} \sub L_i^0$.  Let $T\mc{C}^{\perp}$ be the symplectic orthogonal complement to $T\mc{C} \sub TX^{\tau}_{irr}(\S)|_{\mc{C}}$. 
As argued in \cite[Sec.\ 6]{abou-man}, there is a natural isomorphism from the direct sum $N_0 \mc{C} \oplus N_1\mc{C}$ to the symplectic normal bundle $T\mc{C}^{\perp}/ T\mc{C}$. In particular, $N_0 \mc{C} \oplus N_1\mc{C}$ is a symplectic vector bundle and the $N_i\mc{C}$ are transverse Lagrangian sub-bundles.  
 
A \emph{polarization} of $N_0 \mc{C} \oplus N_1\mc{C}$ can be thought of as a Lagrangian sub-bundle which is fiberwise transverse to both summands. As explained in \cite[Sec.\ 6]{abou-man}, a choice of such a polarization allows one to view $N_1\mc{C}$ as a sub-bundle of $N_0\mc{C} \oplus N^*_0\mc{C}$, which can be described by the graph of a quadratic form $q$ on $N_0\mc{C}$.  The bundle $|W^+| \sub N_0Q$ is a maximal sub-bundle on which the real part of $q$ is positive, which depends only on $q$ up to isomorphism; see \cite[Sec.\ 6]{abou-man}.

As in the proof of \Cref{corollary:finalspin}, one notes that the $N_i \mc{C}$ are equivariant under the $\C^*$ action $\varphi$, in the sense that they are isomorphic to pullbacks of vector bundles over a point $[\rho] \in L_0^0/\C^* \cap L_1^0/\C^*$. As argued in the proof of Lemma 8.3 in \cite{abou-man}, we can therefore construct a $\C^*$-equivariant polarization of $N_0 \mc{C} \oplus N_1\mc{C}$, simply by pulling back a polarization over $\rho$.  We then find that $q$ has constant coefficients with respect to a $\C^*$-equivariant trivialization of $N_0 \mc{C}$, which implies that $|W^+|$ is trivial. 

We conclude that $P^{\bullet}_{\tau}(K)$ is a trivial local system. Along with \Cref{corollary:localsytemcomp}, this implies the following result. 

\thm \label{theorem:connectedsum} Suppose that \ref{assumption:smoothchar}--\ref{assumption:repeatalexander} are satisfied. Then $P^{\bullet}_{\tau}(K)$ is a trivial local system supported on $m_1+m_2$ isolated points and $m_1m_2$ copies of $\C^*$. It follows that $\HP^{-1}_{\tau}(K) = \Z^{m_1m_2}$, $\HP^0_{\tau}(K)= \Z^{m_1+m_2+m_1m_2}$ and $\HP^k_{\tau}(K)= 0$ for all $k \notin \{-1,0\}$.  \ethm

\subsection{Euler characteristics}
We now discuss the behavior of the Euler characteristic of $\HP^*_{\tau}(K)$, the  $\op{SL}(2, \C)$  Casson-Lin invariant $\chi_{\tau}(K)$ defined in the Introduction. In this section we assume that the knots $K_i$ satisfy \ref{assumption:smoothchar} and \ref{assumption:repeatalexander}, but they do not have to satisfy  \ref{assumption:0dimchar}. The following theorem asserts that the  $\op{SL}(2, \C)$ Casson-Lin invariant is additive for such knots.

\thm \label{theorem:euler} For $i =1,2$, suppose that $K_i \sub Y_i$ and $\tau \in (-2,2)$ satisfy \ref{assumption:smoothchar} and \ref{assumption:repeatalexander}. Then $\chi_{\tau}(K_1\# K_2)= \chi_{\tau}(K_1) + \chi_{\tau}(K_2)$.   \ethm 

\pf According to \Cref{proposition:schemesmoothsum}, $\mathscr{X}^{\tau}_{irr}(K_1 \# K_2)$ is smooth as a scheme.  By \Cref{proposition:localsystem}, it follows that our perverse sheaf is a rank one local system on $X^{\tau}_{irr}(K_1\# K_2)$.  On each component $C$ of $X^{\tau}_{irr}(K_1\# K_2)$ of complex dimension $k$, this local system is supported in degree $-k$. Note that the Euler characteristic of a cochain complex with twisted coefficients in a local system depends only on the rank, and not on the twisting. Thus, the contribution to $\chi_{\tau}(K_1 \# K_2)$ from the component $C$ equals $(-1)^k$ times the topological Euler characteristic of $C$. Since the $K_i$ satisfy \ref{assumption:smoothchar}, the same argument shows that $\chi_{\tau}(K_i)$ is a signed count of the topological Euler characteristics of the components of $X^{\tau}_{irr}(K_i)$.

\Cref{proposition:schemesmoothsum} shows that $X^{\tau}_{irr}(K_1\# K_2)$ consists exclusively of Type I and Type II components. According to \Cref{lemma:type1characterize}, the locus of Type I components is topologically the disjoint union of $X^{\tau}_{irr}(K_1)$ and $X^{\tau}_{irr}(K_2)$.  

The theorem now follows from the fact that the Type II components of $X^{\tau}_{irr}(K_1\# K_2)$ have vanishing topological Euler characteristic. This is a consequence of the fact that the Type II components admit a free $\C^*$ action, and are therefore $\C^*$-bundles which always have trivial Euler characteristic; cf. \Cref{subsection:localsystems} and \Cref{remark:c2notneeded}. \epf 

By repeatedly applying \Cref{theorem:euler}, we can in fact compute the Euler characteristic of arbitrary connected sums of knots. 

\cor \label{corollary:euler2} For $j \in \{1,2,\dots,n\}$, let $K_j \sub Y_j$ be a knot in an integral homology sphere. Suppose that $K_j \sub Y_j$ and $\tau \in (-2,2)$ satisfy \ref{assumption:smoothchar} and \ref{assumption:repeatalexander}. Then $\chi_{\tau}(\#_{j=1}^n K_j) = \sum_{j=1}^n \chi_{\tau}(K_j)$. \ecor

\pf Using \Cref{proposition:schemesmoothsum}, one sees that the connected sum of two knots satisfying \ref{assumption:smoothchar} also satisfies \ref{assumption:smoothchar}.  Similarly, the connected sum of two knots satisfying \ref{assumption:repeatalexander} also satisfies \ref{assumption:repeatalexander}, since the Alexander polynomial is multiplicative under connected sums. The corollary follows. \epf 

\subsection{Some computations} \label{subsection:connectedcomp}

The character varieties of the trefoil and figure-eight were described in \Cref{theorem:connectedsum}. For a trefoil $K'= 3_1 \sub S^3$, one computes that \eq X^{\tau}(S^3-K')= \{ (x, y) \mid (y-2) (\tau^2- y-1) =0\}. \eeq  Away from the point $\tau = \pm \sqrt{3}$, this is a smooth, zero-dimensional scheme which thus satisfies \ref{assumption:smoothchar}. Using the fact that the Alexander polynomial of the trefoil is $\Delta(t)=t^2-t+1$, one can check that these are precisely the points ruled out by \ref{assumption:repeatalexander}. 

Let us write $K= 3_1 \# 3_1$ for a connected sum of two trefoils. It follows from \Cref{corollary:connectedsumdescription} that $X^{\tau}_{irr}(3_1 \# 3_1)$ is a disjoint union of one copy of $\C^*$ and two isolated points. According to \Cref{theorem:connectedsum}, we have for $\tau \in (-2,2) \setminus \{\pm \sqrt{3}\}$ that $\HP^*_{\tau}(3_1 \# 3_1)$ is supported in degrees $-1,0$ with $$\HP^{-1}_{\tau}(3_1 \# 3_1)= \Z, \ \  \ \HP^0_{\tau}(3_1 \# 3_1)= \Z^3,.$$  

For the figure-eight $K''= 4_1 \sub S^3$, we compute similarly that \eq X^{\tau}(S^3-K'')= \{(x,y) \mid (y-2)(y^2- (\tau^2-1)y+ \tau^2-1) =0 \}, \eeq which is smooth and zero-dimensional for all $\tau \in (-2,2)$. Writing $K= 4_1 \# 4_1$ for a connected sum of two figure-eight knots, it again follows from \Cref{corollary:connectedsumdescription} that $X^{\tau}_{irr}(4_1 \# 4_1)$ is a disjoint union of four copies of $\C^*$ and four isolated points. 
We find for $\tau \in (-2,2)$ that $\HP^*_{\tau}(4_1 \# 4_1)$ is supported in degrees $-1,0$, with $$\HP^{-1}_{\tau}(4_1 \# 4_1)= \Z^4, \ \  \ \HP^0_{\tau}(4_1 \# 4_1)= \Z^8.$$ 

We can similarly compute for $\tau \in (-2,2) \setminus \{\pm \sqrt{3}\}$ that $\HP^*_{\tau}(3_1 \# 4_1)$ is supported in degrees $-1,0$, with $\HP^{-1}_{\tau}(3_1 \# 4_1)= \Z^2$ and $\HP^0_{\tau}(4_1 \# 4_1)= \Z^5$. 

Using \Cref{corollary:euler2}, we can compute the Euler characteristic of our invariant for arbitrary connected sums of trefoils and figure-eights. Letting $K= (\#_{n} 3_1) \# (\#_{m} 4_1)$ denote a connected sum of $n$ trefoils and $m$ figure-eights, we find that $$\chi_{\tau}(K)= n+ 2m,$$ provided that $\tau \in (-2,2) \setminus \{\pm \sqrt{3}\}$. 

We would like to extend the above computations to connected sums of the small knots and slim knots which were considered in \Cref{section:1dcomputations}. Thanks to the following lemma, we can show that these knots satisfy \ref{assumption:smoothchar}--\ref{assumption:repeatalexander} for all but finitely many $\tau \in (-2,2)$. 

\lem \label{lemma:moregeneric} Suppose that $K \sub Y$ satisfies \ref{assumption:dim1}--\ref{assumption:reduced} in \Cref{subsection:assumptions}. Then $\mathscr{X}^{\tau}_{irr}(Y-K)$ is smooth and zero-dimensional for all but finitely many $\tau \in (-2,2)$. \elem

\pf Let us first note that \ref{assumption:first}--\ref{assumption:last} hold for all values of $\tau$ contained in some cofinite subset $V \sub (-2,2)$ according to \Cref{proposition:generic1}. It then follows by \Cref{proposition:intersectionhyperplane} that $\mathscr{X}^{\tau}_{irr}(\G) = H_{\tau} \cap X_0(\G)$, where $\G= \pi_1(K, x_0)$. But by \Cref{proposition: subfinal} and \Cref{lemma:zerodim}, we know that $H_{\tau} \cap X_0(\G) = \op{Spec} R_{\tau}$ where \eq R_{\tau}= \frac{\C[\e_1]}{\e_1^{n_1}} \tms\dots\tms \frac{\C[\e_k]}{\e_k^{n_k}}. \eeq 
Finally, it follows from \Cref{lemma:reducedtechnical} that, after restricting to a possibly smaller cofinite subset $V' \sub V \sub (-2,2)$, we have that $n_i=1$ for $i=1,\dots,k$. This proves the claim.  \epf

We saw in \Cref{subsection:dim1prelim} that \ref{assumption:dim1}--\ref{assumption:reduced} hold for generic $\tau$ for a wide class of small knots and slim knots, including all two-bridge knots, torus knots, and $(-2,3, 2n+1)$ pretzel knots where $n \neq 0,1,2$ and $2n+1$ is not divisible by $3$. The above lemma now implies that these knots also satisfy \ref{assumption:smoothchar}--\ref{assumption:repeatalexander} for all but finitely many $\tau \in (-2,2)$.  By combining \Cref{lemma:moregeneric} with \Cref{theorem:apolylink}, \Cref{corollary:knotpolyrelation} and \Cref{theorem:connectedsum}, we obtain Theorem~\ref{thm:finalconnectedsum}, which we restate here for convenience. 

\cor[Theorem~\ref{thm:finalconnectedsum}] \label{corollary:finalconnectedsum} For $i=1,2$, suppose that $K_i \sub S^3$ is a two-bridge knot, a torus knot, or a $(-2,3, 2n+1)$ pretzel knot where $n \neq 0,1,2$ and $2n+1$ is not divisible by $3$.  Let $K= K_1 \# K_2$. Then, for all but finitely many $\tau \in (-2,2)$, we have that $\HP^*_{\tau}(K)$ is supported in degrees $-1,0$ with 
$$\HP^{-1}_{\tau}(K)= \Z^{ (\op{deg}_{\l} \widehat{A}(K_1)) \cdot (\deg_{\l} \widehat{A}(K_2))}$$ and $$\HP^{0}_{\tau}(K)= \Z^{(\deg_{\l} \widehat{A}(K_1)) +(\deg_{\l} \widehat{A}(K_2))+ (\op{deg}_{\l} \widehat{A}(K_1)) \cdot (\deg_{\l} \widehat{A}(K_2))}.$$ \ecor

We can compute $P^{\bullet}_{\tau}(K)$ explicitly when $K$ is a connected sum of the slim knots considered in \Cref{subsection:slimcomputations} whose $\widehat{A}$-polynomial can be determined exactly. We limit ourselves to the following example.

\ex We saw in \Cref{example:slimknotscomp} that $\op{deg}_{\l} \widehat{A}(8_1)= \Z^6$. It follows from \Cref{corollary:finalconnectedsum} that, for all but finitely many $\tau \in (-2,2)$, we have that $\HP^*_{\tau}( 8_1 \# 8_1)$ is supported in degrees $-1,0$ with $\HP^{-1}_{\tau}(8_1\# 8_1)= \Z^{36}$ and $\HP^0_{\tau}(8_1\# 8_1)= \Z^{48}$.  \eex

Finally, \Cref{corollary:euler2} allows us to compute the Euler characteristic of our invariant for arbitrary connected sums of the knots considered in \Cref{corollary:finalconnectedsum}.  More precisely, if we let $K_j \sub Y_j$ be one of the knots considered in \Cref{corollary:finalconnectedsum} for $j =1,2,\dots,n$, we find that 
$$\chi_{\tau}(\#_{j=1}^n K_j) = \sum_{j=1}^n \deg_{\l} \widehat{A}(K_j),$$ for all but finitely many $\tau \in (-2,2)$.

\section{Appendix I: Parabolic Higgs bundles}   \label{section:parabolichiggs}

The purpose of this appendix is to prove that the character variety $X^{\tau}_{irr}(\S)$ considered in \Cref{proposition:stabilization} is connected and simply-connected. We do this by exploiting a version of the non-abelian Hodge theory correspondence due to Simpson, which establishes an isomorphism between $X^{\tau}_{irr}(\S)$ and a certain moduli space of \emph{parabolic} Higgs bundles; see \Cref{mainequivalence}. 

The topology of parabolic Higgs bundles can be probed using Morse theory. In particular, Boden and Yokagawa showed in \cite{boden-yok} that the relevant moduli spaces are connected and simply-connected, under a technical hypothesis involving ``genericity of weights". Unfortunately, this hypothesis is violated in the case of interest to us. Our analysis, which draws significantly from the paper of Thaddeus \cite{thaddeus}, will show that the desired conclusion is nonetheless true, provided that $\S$ has genus at least $2$.

We begin with a brief introduction to the theory of parabolic Higgs bundles. More details can be found in \cite{boden-yok}, \cite{thaddeus} and \cite{mondello}. We particularly recommend the last reference to readers with no prior exposure to this theory. This appendix is self-contained and completely independent from the rest of the paper. 

\subsection{General definitions} This section collects some important definitions from the theory of parabolic Higgs bundles. We warn the reader that there are many variants of these definitions in the literature. 

Let $\mc{S}$ be a Riemann surface of genus $g.$  Fix a set of distinct points $p_1, \dots, p_n \in \mc{S}$ and define the effective divisor $D= p_1 + \dots + p_n.$ 

\defi \label{definition: parabolicvector}A \emph{parabolic vector bundle} $E_{*}$ of rank $r$ on $(\mc{S}, D)$ consists of the following data:
\begin{enumerate}
\item[(i)] a holomorphic vector bundle $E$ of rank $r$,
\item[(ii)] for each point $p_i,$ a complete flag $E_{p_i}= E_{1}(p_i) \supset E_{2}(p_i) \supset \dots \supset E_{r}(p_i) \supset 0,$ 
\item[(iii)] for each point $p_i,$ a choice of weights $0 \leq \a_{1}(p_i) < \a_{2}(p_i)< \dots < \a_{r}(p_i)<1.$
\end{enumerate}
\edefi

There are many conventions in the literature regarding the indexing of the flags and weights. \Cref{definition: parabolicvector} is identical to \cite[Definition 2.1]{boden-yok}, except that we require the flags to be complete for notational simplicity since this is the only case relevant to us.

Let $E_*$ and $F_*$ be parabolic vector bundles on $(\mc{S}, D)$ with weights $\a_j^1(p_i)$ and $\a_j^2(p_i)$ respectively. A morphism $\phi: E \to F$ of vector bundles is said to be a parabolic morphism if $\phi(E_{j}(p_i)) \sub F_{k+1}(p_i)$ whenever $\a^1_{j}(p_i) > \a^2_{k}(p_i).$  It is said to be strongly parabolic if this holds whenever $\a^1_{j}(p_i) \geq \a^2_{k}(p_i).$

\defi \label{definition: parabolichiggs} A \emph{parabolic Higgs bundle} consists in the data of a parabolic vector bundle $E_*$ and a strongly parabolic morphism $\Phi: E \to E \otimes K(D).$ This morphism is usually called the \textit{Higgs field}. We will usually use boldface letters $\bf{E}= (E_*, \Phi)$ to denote parabolic Higgs bundles. \edefi

There are natural notions of sub-bundle and quotient bundle in the categories of parabolic vector bundles and parabolic Higgs bundles; see \cite[p.\ 3]{thaddeus}. Given a parabolic vector bundle $E_*$ on $(\mc{S}, D)$, we say that $F_* \sub E_*$ is a parabolic sub-bundle if $F_*$ is a vector sub-bundle of $E_*$ where the weights and filtration of $F_*$ are induced from the corresponding data for $E_*$. More precisely, given a point $p_i \in D$, we consider the sequence of vector spaces \eq \label{equation:flag} E_{p_i} \cap F_{p_i}  \supseteq E_2(p_i) \cap F_{p_i} \supseteq \dots \supseteq E_r(p_i) \cap F_{p_i}. \eeq We obtain the flag of $F_*$ at $p_i$ by discarding any repetitions in \eqref{equation:flag}. The weights of $F_*$ at $p_i$ are obtained from those of $E_*$ by discarding $\a_j(p_i)$ if $ E_{j-1}(p_i) \cap F_i = E_j(p_i) \cap F_i$.  The filtration and weights of the quotient $E_*/F_*$ are obtained similarly by considering the sequence $$E_{p_i} / F_{p_i}  \supseteq E_2(p_i) \cap (F_{p_i} \cap E_2(p_i)) \supseteq \dots \supseteq E_r(p_i) /(F_{p_i} \cap E_r(p_i)).$$ 

A Higgs sub-bundle $\bf{F} \sub \bf{E}$ is an ordinary parabolic sub-bundle $F_* \sub E_*$ which is $\Phi$-invariant, where $\bf{E}= (E_*, \Phi)$. In this case, $\Phi$ passes to the quotient $E_*/F_*$, which thus also carries the structure of a Higgs bundle.

The \emph{parabolic degree} of a parabolic vector bundle 
$E_*$ is defined as follows \eqs \op{pdeg}(E_*):= \op{deg}(E) + \sum_{p \in D} \sum_{i=1}^r \a_i. \eeqs

The \emph{slope} is then \eqs \mu(E_*)= \frac{ \op{pdeg} (E_*)} { \op{rank}(E)}. \eeqs  We say that a parabolic vector bundle $E_*$ is stable (resp. semistable) if there does not exist a proper sub-bundle $E'_* \sub E_*$ such that $\mu(E'_*) > \mu(E)$ (resp. $ \geq$).  We say that a parabolic Higgs bundle is stable (resp. semistable) if there does not exist a proper $\Phi$-invariant sub-bundle $E'_* \sub E_*$ (i.e. a proper Higgs sub-bundle) such that $\mu(E'_*) > \mu(E)$ (resp. $ \geq$). 

Given a line bundle $L$ and real numbers $0\leq \a(p_i)$ for $i=1,\dots,n$, we define the parabolic bundle $L(\a(p_1) p_1+\dots+\a(p_n) p_n)$ to be the underlying line bundle $L([\a(p_1)] p_1+\dots+[\a(p_n)] p_n)$ endowed with the weight $\a(p_i)-[\a(p_i)]$ at the point $p_i$ and the obvious flag structure. Here $[-]$ denotes the integer part of a non-negative real number. 

The \emph{determinant} of a parabolic vector bundle $E_*$ is defined as 
\eqs \op{det}(E_*)= \op{det}(E) \otimes \lt( \bigotimes_1^n \mc{O}_X ( \sum_{p \in D} (\sum_{i=1}^r \a_j(p)) p ) \rt). \eeqs

Observe that we have $\op{det}(E_*)= E_*$ if $E_*$ is a parabolic line bundle, because in that case all $\a_j(p)$ are zero.

All of the above notions have sheaf-theoretic analogs. We refer the reader to \cite[Sec.\ 2.2]{boden-yok} for the relevant definitions.

\subsection{The main equivalence}

We now specialize to the case of a Riemann surface $\mc{S}$ of genus $g$ with two distinguished points $p, q \in \mc{S}.$  We set $D= p+q$ as above. Let $\bm{w}$ denote the data of weights $0< \a_1(p)<\a_2(p)<1$ and $0< \a_1(q)< \a_2(q)<1.$

We introduce the set $\op{Higgs}^s(\mc{S}, \bm{w}, 2, \mc{O}_{\mc{S}})$ whose elements are isomorphism classes of parabolic Higgs bundles $(E_*, \Phi)$ on $(\mc{S}, D)$ satisfying the following conditions: 
\begin{itemize}
\item $(E_*, \Phi)$ is stable, 
\item $\op{rank} E =2,$
\item $E_*$ has weights $\bm{w},$
\item there is an isomorphism $\op{det}(E_*) \to \mc{O}_{\mc{S}}, $
\item $\op{Tr} \Phi=0.$
\end{itemize}

We define $\op{Higgs}^{ss}(\mc{S}, \bm{w}, 2, \mc{O}_{\mc{S}})$ in the analogous way, where we now only require $(E_*, \Phi)$ to be semistable.

We also record the following important fact.

\prop[see Thm.\ 1.6 of \cite{konno}] \label{proposition:smoothhiggs} The set $\op{Higgs}^s(\mc{S}, \bm{w}, 2, \mc{O}_{\mc{S}})$ has the structure of a smooth (in general non-compact) complex manifold. \eprop

Let $\S$ be a Riemann surface with two boundary components which arises from a Heegaard splitting $(\S, U_0, U_1)$ as in \Cref{subsection:symplecticgeom}. Let $\fk{c}_1= \op{Conj}([c_p])$ and $\fk{c}_2= \op{Conj}([c_q])$ be defined as in \ref{equation: diagram1}.  Then $(\G; \fk{c}_1, \fk{c}_2)$ is an object of $\bf{Gp}^+$. Given $\tau \in (-2, 2)$, let $X^{\tau}_{irr}(\S)= X^{\tau}_{irr}(\G)$ be the associated character variety, defined as in \Cref{section:relativerepschemes}. 

Let $\mc{S}$ be a closed Riemann surface obtained from $\S$ by gluing disks to its two boundary components. Let $p$ and $q$ be distinguished points in the interior of these disks. 

\thm[Simpson \cite{simpson}; see also Theorem 4.12 in \cite{mondello}]  \label{mainequivalence} Fix $\a \in (0,1/2)$ and let $\tau= 2 \cos (2\pi \a).$  Let the weights $\bf{w}$ be defined by letting $\a_1(p)= \a_1(q)=\a$ and $\a_2(p)= \a_2(q)= 1-\a.$ Then there is a diffeomorphism
\eq X_{irr}^{\tau}({\S}) \simeq \op{Higgs}^s(\mc{S}, \bm{w}, 2, \mc{O}_{\mc{S}}). \eeq
\ethm 

\rmk The left-hand side of Simpson's original correspondence involves the character variety of the punctured Riemann surface $\mc{S}-p-q$, as opposed to the Riemann surface with boundary $\S$. Since the character variety only depends on the underlying fundamental group, this distinction is immaterial. \ermk

\subsection{Variations of the weights}  It will be convenient for our purposes to consider the set $\mc{P}= (0, 1/2) \tms (0, 1/2)$ with coordinates $(\a, \b),$ which we will think of as a parameter space.  We will refer to the subset $\{\a=\b \} \sub \mc{P}$ as the \emph{wall} and to the subsets $\Delta^+ = \{ \a> \b\}$ and $\Delta^- = \{ \b > \a\}$ as the \emph{chambers}.

A point $(\a, \b) \in \mc{P}$ determines a choice of weights $\bm{w}( \a, \b)$ on $(\mc{S}, D)$ by setting $\a_1(p) = \a$, $\a_2(p)= 1-\a$, and $\a_1(q)= \b$, $\a_2(q) = 1-\b$.  We would like to study how $\op{Higgs}^{ss}(\mc{S}, \bm{w}(\a, \b), 2, \mc{O}_{\mc{S}})$ changes as we vary $(\a, \b) \in \mc{P}.$

To this end, it will be necessary to adopt a slight change of perspective.  According to \Cref{definition: parabolicvector} and \Cref{definition: parabolichiggs}, we are viewing the weights of a parabolic Higgs bundle $\bf{E}= (E_*, \Phi)$ as being part of the data of the underlying parabolic vector bundle $E_*.$  Instead, let us now view the choice of weights as an independent piece of data, i.e. a parabolic Higgs bundle should now be viewed as a triple $(E_*, \Phi, \bm{w}).$

Observe that as we vary $(\a, \b) \in \mc{P},$ a parabolic Higgs bundle $(E_*, \Phi, \bm{w}(\a, \b))$ remains of constant rank and parabolic degree. The determinant $\op{det}(E_*)$ and the trace $\op{Tr} \Phi$ are also constant.  However, the parabolic degree and slope of sub-bundles $E'_* \sub E_*$ can change.  

\lem \label{lemma: stablechambers} Suppose that $\a \neq \b$ and let $\bm{w}= \bm{w}(\a, \b).$ Then $$\op{Higgs}^{ss}(\mc{S}, \bm{w}, 2, \mc{O}_{\mc{S}})= \op{Higgs}^s(\mc{S}, \bm{w}, 2, \mc{O}_{\mc{S}}).$$ \elem

\pf If $E_* \in \op{Higgs}^{ss}(\mc{S}, \bm{w}, 2, \mc{O}_{\mc{S}})$ is semistable but not stable, then there exists a $\Phi$-invariant sub-bundle $E'_* \sub E$ such that $\mu(E'_*)= \mu(E)=0.$ Hence $\op{pdeg}(E'_*)=0.$  But this means that either $\a_1(p) + \a_2(q) \in \Z$ or $\a_2(p)+\a_1(q) \in \Z.$ This is possible only if $\a_1(p)= \a_2(q)$, i.e. $\a=\b.$ \epf

By combining \Cref{proposition:smoothhiggs} and \Cref{lemma: stablechambers}, it follows that $\op{Higgs}^{ss}(\mc{S}, \bm{w}, 2, \mc{O}_{\mc{S}})$ has the structure of a smooth complex manifold if we let $\bm{w}= \bm{w}(\a, \b)$ for $(\a, \b) \in \Delta^+ \cup \Delta^-$. The following important result describes the topology of this manifold. 




\thm[Boden and Yokagawa, see p.\ 2 and Thm.\ 4.2 in \cite{boden-yok}] \label{theorem: simplyconnected} Suppose that $(\a, \b) \in \Delta^+ \cup \Delta^-$ and let $\bm{w}= \bm{w}(\a, \b).$ Then $\op{Higgs}^{ss}(\mc{S}, \bm{w}, 2, \mc{O}_{\mc{S}})$ has the structure of a smooth complex manifold of complex dimension $6g-2$. If $g\geq 1$, it is connected and simply-connected.   \ethm

Fix $\a \in (0,1/2).$ For $0<\e< \a,$ define $\g: (0, \e) \to (0, 1/2)^2$ by setting $\g(t)= (\a, \a+t).$   Let $\bm{w}(t):= \bm{w}(\g(t)).$ Observe that $\bm{w}(t) \in \Delta^+ \cup \Delta^-$ when $t>0,$ and so it follows by \Cref{lemma: stablechambers} that $\op{Higgs}^{ss}(\mc{S}, \bm{w}, 2, \mc{O}_{\mc{S}})= \op{Higgs}^s(\mc{S}, \bm{w}, 2, \mc{O}_{\mc{S}}).$ Let us analyze what happens as $t \to 0.$

\lem \label{lemma:split} Suppose that $(E_*, \Phi, \bm{w}(t))$ is stable for $t > 0$ and unstable for $t=0.$ Then there exists a unique destabilizing $\Phi$-invariant sub-bundle $E^+_* \sub E_*.$ Thus, $E_*$ can be uniquely presented as a non-split extension of parabolic Higgs bundles $0 \to \textup{\bf{E}}^+ \to \textup{\bf{E}} \to \textup{\bf{E}}^- \to 0.$ 
\elem 

\pf Since $E_*$ is unstable, there exists a $\Phi$-invariant sub-bundle $E^+_* \sub E$ such that $\mu(E^+_*)= \mu(E_*)=0.$  To verify uniqueness, suppose for contradiction that $\tilde{E}^+_* \sub E$ is also destabilizing. Then there is a morphism $\tilde{E}^+_* \to E \to E^-_*:= E/ E^+_*$ of $\Phi$-invariant bundles. But since $\tilde{E}^+_*$ and $E^-_*$ are both tautologically stable (since they are of rank $1$) and of equal slope, this morphism is either zero or an isomorphism; see (3.3) in \cite{thaddeus}.  It cannot be the zero map since $\tilde{E}^+_* \neq E^+_*$ by assumption.  It cannot be an isomorphism since $\tilde{E}^+_*$ and $E^-_*$ have different parabolic structures. This gives the desired contradiction. 

To see that the extension is non-split, suppose for contradiction that $\bf{E}= \bf{E}^+ \oplus \bf{E}^-$. Then $0= \mu(E_*) = \mu(E^+_*) + \mu(E^-_*)$ for all $0 \leq t <\e $. In particular, for $0<t<\e$, we find that $\mu(E^-_*) = - \mu(E^+_*)> 0 = \mu(E_*)$, which contradicts the stability of $\bf{E}$. \epf

\subsection{Dimension count}
Throughout this section, $\a \in (0,1/2)$ is an arbitrary fixed parameter.  We will be considering parabolic Higgs bundles of rank $1$ on $(\mc{S}, D).$  A choice of weights therefore consists in a choice of two real numbers $0< \a_1(p)< 1$ and $0< \a_1(q)<1.$  Let $\bm{w}_{1,t}$ be weights obtained by setting $\a_1(p)=\a$ and $\a_1(q)= 1-\a-t$ and let $\bm{w}_{2,t}$ be obtained by setting $\a_2(p)=1-\a$ and $\a_2(q)=\a+t.$ 

For $t>0$, let $\mc{E}$ be the set of non-split extensions $ 0 \to \textup{\bf{E}}^+ \to \textup{\bf{E}} \to \textup{\bf{E}}^- \to 0$ of parabolic Higgs bundles such that $\textup{\bf{E}}= (E_*, \Phi, \bm{w}(t)) \in \op{Higgs}^{ss}(\mc{S}, \bm{w}(t), 2, \mc{O}_{\mc{S}})$ and such that $\bf{E}^+= (E^+_*, \Phi^+)$ has rank $1$.  Observe that $\bf{E}^+$ and $\bf{E}^-$ are related by the following conditions: 

\begin{itemize}
\item[(i)] $(E^+_*, \Phi^+)$ and $(E^-_*, \Phi^-)$ have rank $1$, 
\item[(ii)] $E^+_* \simeq (E^-_*)^{-1}$ and $\op{Tr} \Phi^+ = - \op{Tr} \Phi^-.$
\end{itemize}

We let $\mc{X}$ be the set of pairs $\lt( (E^+_*, \Phi^+, \bm{w}_{1,t}), (E^-_*, \Phi^-, \bm{w}_{2,t}) \rt)$ satisfying the conditions (i) and (ii) above.  There is a natural surjection 
\eq
\mc{E} \to \mc{X},
\eeq whose fiber $\mc{E}_p$ over a point $p = ((E^+_*, \Phi^+, \bm{w}_{1,t}),  (E^-_*, \Phi^-, \bm{w}_{2,t}) )$ is simply the set of non-split extensions of $(E^-_*, \Phi^-, \bm{w}_{2,t}) $ by $(E^+_*, \Phi^+, \bm{w}_{1,t}).$

Similarly, let $\tilde{\mc{E}}$ be the set of non-split extensions $0 \to \bf{E}^- \to \bf{E} \to \bf{E}^+ \to 0.$ 

The goal of this section is to prove the following claim.

\thm \label{theorem:dimensioncount} The complex dimension of $\mc{E}$ and $\tilde{\mc{E}}$ is at most $4g-1.$ \ethm

\cor \label{corollary:finalsimplyconnected} For $g \geq 2,$ the moduli space $\op{Higgs}^s(\mc{S}, \bm{w}(\a, \a), 2, \mc{O}_{\mc{S}})$ is connected and simply-connected for any $\a \in (0,1/2).$ \ecor

\pf[Proof of \Cref{corollary:finalsimplyconnected}]  
	Recall from the previous section that $\bm{w}(t)= \bm{w}(\a, \a+ t).$  By \Cref{theorem: simplyconnected}, we know that $\op{Higgs}^s(\mc{S}, \bm{w}(t), 2, \mc{O}_{\mc{S}})$ is connected 	and simply-connected and of complex dimension $6g-2$ when $t>0$ and $g \geq 1$.

	Given $0< t_0<\e,$ there is a natural map 
	$\op{Higgs}^s(\mc{S}, \bm{w}(t_0), 2, \mc{O}_{\mc{S}}) \to \op{Higgs}^{ss} (\mc{S}, \bm{w}(0), 2, \mc{O}_{\mc{S}})$ 
	obtained by sending $(E_*, \Phi, \bm{w}(t_0)) \mapsto (E_*, \Phi, \bm{w}(0)).$  
	
	Since stability is an open condition under varying the weights, this image of this map contains $\op{Higgs}^{s} (\mc{S}, \bm{w}(0), 2, \mc{O}_{\mc{S}})$. 
	According to \Cref{lemma:split}, if one restricts the map to the complement of the locus $\mc{E} \cup \tilde{\mc{E}}$ consisting of non-split extensions, then it is an isomorphism onto $\op{Higgs}^{s} (\mc{S}, \bm{w}(0), 2, \mc{O}_{\mc{S}})= \op{Higgs}^{s} (\mc{S}, \bm{w}(\a,\a), 2, \mc{O}_{\mc{S}})$.  But it follows by the theorem that the complement of $\mc{E} \cup \tilde{\mc{E}}$ has complex codimension $ 2g-1.$  If $g \geq 2,$ then $2g-1\geq 3$ so the complement remains connected and simply-connected, which gives the desired result. \epf

It remains to prove \Cref{theorem:dimensioncount}.  For simplicity, we will only consider the case of $\mc{E}.$ It will be apparent that the argument works equally well for $\tilde{\mc{E}}.$

\lem \label{lemma:dimension1} The dimension of $\mc{X}$ is exactly $2g.$ \elem 
\pf Let $\op{Higgs}(\mc{S}, \bm{w}_{1,t})$ be the space of parabolic Higgs bundles of rank $1$ and weights $\bm{w}_{1,t}.$ According to \cite[p.\ 2]{boden-yok}, this space has complex dimension $2(g-1)+2=2g.$ However, it follows by condition (ii) in the definition of $\mathcal{X}$ that the projection $\mc{X} \to \op{Higgs}(\mc{S}, \bm{w}_{1,t})$ taking a pair $((E^+, \Phi^+, \bm{w}_{1,t}), (E^-, \Phi^-, \bm{w}_{2,t}))$ to the first factor is an isomorphism. The claim follows. \epf

Let $\bf{E}= (E_*, \Phi)$ and $\bf{F}= (F_*, \Psi)$ be parabolic Higgs bundles on $(\mc{S}, D).$ Following \cite[p.\ 7]{ggm}, we define a complex of sheaves 
\begin{align*}
C^{\bullet}(\bf{E}, \bf{F}) : \op{ParHom}(E, F) &\to \op{SParHom}(E, F) \otimes K(D) \\
f &\mapsto (f \otimes 1) \Phi - \Psi f,
\end{align*}
where $\op{ParHom}(E, F)$ and $\op{SParHom}(E, F)$ denote the spaces of parabolic, resp. strictly parabolic, morphisms between $E$ and $F$.

We write $C^{\bullet}( \bf{E}) = C^{\bullet}( \bf{E}, \bf{E}).$ We will need the following proposition.

\prop[Prop.\ 2.2 in \cite{ggm}] \label{proposition: dimension2} The space of isomorphism classes of non-split extensions $0 \to \textup{\bf{E}}' \to \textup{\bf{E}} \to \textup{\bf{E}}'' \to 0$ is parametrized by $\bb{P} ( \bb{H}^1(C^{\bullet}(\textup{\bf{E}}', \textup{\bf{E}}''))).$ \eprop

We also record the following useful facts as a lemma. 

\lem \label{lemma:usefulfacts} Let $\textup{\bf{E}}'$ and $\textup{\bf{E}}''$ be non-isomorphic, stable parabolic Higgs bundles of rank $1$. Then we have:
\begin{enumerate}
\item $\bb{H}^0(C^{\bullet}(\textup{\bf{E}}', \textup{\bf{E}}''))=0.$ 
\item $\bb{H}^2(C^{\bullet}(\textup{\bf{E}}', \textup{\bf{E}}'')) \simeq \bb{H}^0(C^{\bullet}(\textup{\bf{E}}'', \textup{\bf{E}}')).$ 
\item $\bb{H}^i(C^{\bullet}(\textup{\bf{E}}', \textup{\bf{E}}''))=0$ for all $i<0$ and $i>2.$ 
\end{enumerate}
\elem

\pf By \cite[Prop.\ 2.2(ii)]{ggm}, the set $\bb{H}^0(C^{\bullet}(\textup{\bf{E}}',\textup{\bf{E}}''))$ parametrizes morphisms of parabolic Higgs bundles from $\textup{\bf{E}}'$ to $\textup{\bf{E}}''$. But since $\textup{\bf{E}}'$ and $\textup{\bf{E}}''$ are non-isomorphic and have equal rank, it follows from \cite[(3.3)]{thaddeus} that the zero map is the only such morphism. This proves (1). Next, (2) is a direct corollary of \cite[2.3(ii)]{ggm}. Finally, (3) follows from \cite[2.3(ii)]{ggm} and the fact that $C^{\bullet}(\textup{\bf{E}}', \textup{\bf{E}}'')$ is concentrated in non-negative degrees, and therefore has no hypercohomology in negative degrees. \epf

It follows from (1) and (2) that $\bb{H}^2(C^{\bullet}(\textup{\bf{E}}',\textup{\bf{E}}''))=0$, which implies that $\op{dim} \bb{H}^1( C^{\bullet}(\textup{\bf{E}}',\textup{\bf{E}}''))= - \chi( C^{\bullet}( \textup{\bf{E}}', \textup{\bf{E}}'')).$  

\lem \label{lemma:riemannroch} Let $\textup{\bf{E}}'$ and $\textup{\bf{E}}''$ be non-isomorphic, stable parabolic Higgs bundles on $(\mc{S}, D)$ of rank $1$. Assume moreover that $\textup{\bf{E}}'$ and $\textup{\bf{E}}''$ have distinct weights. Then $\chi( C^{\bullet}(\textup{\bf{E}}', \textup{\bf{E}}'' ))= -2g.$ \elem

\pf 

By Riemann-Roch, we have 
\begin{align*}
\chi(\op{SParHom}({E}',{E}'') \otimes K(D)) &= \op{deg}(\op{SParHom}({E}',{E}'') \otimes K(D)) + r(1-g) \\
&= \op{deg}(\op{SParHom}({E}', {E}''))+ \op{deg}(K(D)) + r(1-g) \\
&= \op{deg}(\op{SParHom}({E}',{E}'')) + (2g-2) + deg(D)+ 1(1-g).
\end{align*}

Again applying Riemann-Roch, we have
\begin{align*} \chi(\op{SParHom}({E}',{E}'')) &= \op{deg}(\op{SParHom}({E}', {E}''))+ r(1-g) \\
&=  \op{deg}(\op{SParHom}({E}', {E}''))+ (1-g).
\end{align*}

Putting together the above two equations, it follows that
\begin{align*}  \chi(\op{SParHom}({E}',{E}'') \otimes K(D)) &=  \chi(\op{SParHom}({E}',{E}''))  + (2g-2)+\op{deg}(D) \\
&= \chi(\op{SParHom}({E}',{E}'')) + 2g,
 \end{align*}
where we have used the fact that $\op{deg}(D)=2.$
 
Now, by definition we have $\chi(C^{\bullet}(\bf{E}', \bf{E}''))  =  \chi(\op{ParHom}({E}',{E}''))  - \chi(\op{SParHom}({E}',{E}'') \otimes K(D)).$  This is equivalent to $\chi(C^{\bullet}(\bf{E}', \bf{E}''))  =  \chi(\op{ParHom}({E}',{E}'')) - \chi(\op{SParHom}({E}',{E}'') - 2g$ by the above equality.  

Since $\bf{E}'$ and $\bf{E}''$ have distinct weights, there is no difference between parabolic and strictly parabolic morphisms, and it follows that $\op{ParHom}({E}',{E}'')= \op{SParHom}({E}',{E}'').$ Therefore we have $ \chi(\op{ParHom}({E}',{E}'')) - \chi(\op{SParHom}({E}',{E}'')) =0$ and the lemma follows.  \epf

\Cref{lemma:riemannroch}, along with the observation that $\op{dim} \bb{H}^1( C^{\bullet}(\bf{E}', \bf{E}''))= - \chi( C^{\bullet}( \bf{E}', \bf{E}''))$ which was stated after the proof of \Cref{lemma:usefulfacts}, implies that $\op{dim} \bb{H}^1( C^{\bullet}(\bf{E}', \bf{E}''))= 2g$. This leads to the following corollary of \Cref{proposition: dimension2}. 

\cor \label{corollary:dimensionfiber} The dimension of $\mc{E}_p$ is at most $2g-1.$ \ecor

It is now straightforward to prove \Cref{theorem:dimensioncount}.

\pf[Proof of \Cref{theorem:dimensioncount}] 

There exists a natural surjection $\mc{E} \to \mc{X}$, whose fiber over a point $p = ((E^+_*, \Phi^+, \bm{w}_1),  (E^-_*, \Phi^-, \bm{w}_2) )$ is simply the set of extensions of $(E^-_*, \Phi^-, \bm{w}_2) $ by $(E^+_*, \Phi^+, \bm{w}_1).$ The dimension of $\mc{E}$ is bounded above by the sum of the dimensions of $\mc{X}$ and $\mc{E}_p.$  The theorem thus follows by putting together \Cref{lemma:dimension1} and \Cref{corollary:dimensionfiber}.  As noted previously, this argument works just as well for $\tilde{\mc{E}}$ in place of $\mc{E}.$ \epf

\begin{bibdiv}
\begin{biblist}*{labels={alphabetic}}
\bibselect{sl2c_knots_bibliography}
\end{biblist}
\end{bibdiv}

\end{document}